\documentclass{amsart}

\usepackage{amscd}
\usepackage{amssymb}

\usepackage{color}

\usepackage[all]{xy}
%
%
\usepackage{amsthm}



\newtheorem{thm}[equation]{Theorem}
\newtheorem{lem}[equation]{Lemma}
\newtheorem{cor}[equation]{Corollary}
\newtheorem{prop}[equation]{Proposition}
\newtheorem{conj}[equation]{Conjecture}

\newtheorem*{thm*}{Theorem}
\newtheorem*{prop*}{Proposition}
\newtheorem*{cor*}{Corollary}
\newtheorem*{lem*}{Lemma}
\newtheorem*{MT*}{Main Theorem}
\newtheorem{ques}[equation]{Question}
\newtheorem*{ques*}{Question}

\newtheorem{specthm}{Theorem}

\theoremstyle{definition} %

\newtheorem*{defn*}{Definition}

\newtheorem{eg}[equation]{Example}

\theoremstyle{remark} %
\newtheorem{rmk}[equation]{Remark}

\newtheorem*{rmk*}{Remark}
\newtheorem*{rmks*}{Remarks}

%
%
\newtheoremstyle{exercise}
  {3pt}
  {3pt}
  {\small}
  {}
  {\sc\small}
  {.}
  {.5em}
   {}     
  {}

\theoremstyle{exercise}

%
  
%
%

%
%
%
\makeatletter
{\renewcommand{\theequation}{#1}}%
{\renewcommand{\theequation}{\arabic{equation}}\addtocounter{equation}{-1}\global\@ignoretrue}
\makeatother

\makeatletter
{\renewcommand{\theequation}{#1}\begin{eqnarray}}%
{\end{eqnarray}\renewcommand{\theequation}{\arabic{equation}}\addtocounter{equation}{-1}\global\@ignoretrue}
\makeatother

%
%
\makeatletter
{\smallskip \refstepcounter{equation}\noindent{\textbf{\theequation.} }{{\textbf{#1.}}}}%
{\smallskip \global\@ignoretrue}
\makeatother

%
%
\makeatletter
{\smallskip \refstepcounter{equation}\noindent{\textbf{\theequation.} }{{\textbf{#1}}}}%
{\smallskip \global\@ignoretrue}
\makeatother

%
%
\makeatletter
{\smallskip \refstepcounter{equation}{\sc \theequation}{\sc (#1).}}%
{\smallskip \global\@ignoretrue}
\makeatother

%
%
\makeatletter
{\smallskip \refstepcounter{equation}\noindent{\sc \theequation.}{\sl{ #1.}}}%
{\smallskip \global\@ignoretrue}
\makeatother

%
%
\makeatletter
\newenvironment{borel*}%
{\smallskip \refstepcounter{equation}\noindent{\textbf{\theequation.}}}%
{\global\@ignoretrue}
\makeatother

%
%
\newcommand{\flist}[1]{\hangindent\leftmargini\textup{(1)}\hskip\labelsep {#1}%
\begin{enumerate}%
\setcounter{enumi}{1}%
}

\newcommand{\ot}{\otimes}

\setlength{\unitlength}{.4in}


%
%
\newcommand{\R}{{\mathbb{R}}}        
\newcommand{\Z}{{\mathbb{Z}}}        

\newcommand{\D}{\Delta}



\newcommand{\oddots}{{\mathinner{\mkern1mu\raise1pt\vbox{\kern7pt\hbox{.}}\mkern2mu\raise4pt\hbox{.}\mkern2mu\raise7pt\hbox{.}\mkern1mu}}}

%
%
%
%

%
%

%
%
\newcommand{\Bt}{{(B,\tau)}}

%
%

\newcommand{\Fsep}{F_{{\mathrm{sep}}}}

\newcommand{\Fx}{{F^{\times}}}

%
%




%
%


\DeclareMathOperator{\Spin}{Spin}           

\DeclareMathOperator{\SL}{SL}
\DeclareMathOperator{\GL}{GL}
\DeclareMathOperator{\PGL}{PGL}
\DeclareMathOperator{\PSO}{PSO}

\newcommand{\SO}{\mathrm{SO}}
\newcommand{\SU}{\mathrm{SU}}

\newcommand{\Gm}{\mathbb{G}_m}

%
%

\DeclareMathOperator{\Gal}{Gal}

\DeclareMathOperator{\im}{im}

\DeclareMathOperator{\car}{char}

\DeclareMathOperator{\Id}{Id}

\DeclareMathOperator{\cores}{cor}

\DeclareMathOperator{\diag}{diag}

%
%

%
%
\DeclareMathOperator{\End}{End}

\DeclareMathOperator{\aut}{Aut}
\DeclareMathOperator{\Aut}{Aut}

\newcommand{\Hom}{{\mathrm{Hom}}}

%
%
\newcommand{\Nrd}{{\mathrm{Nrd}}}

%
%

\newcommand{\iso}{\xrightarrow{\sim}}

%
%


%
%




\usepackage{hyperref}
\hypersetup{
  colorlinks=true, 
   linkcolor=red,  
}

\usepackage[hyphenbreaks]{breakurl}

\newcommand{\Falg}{F_{{\mathrm{alg}}}}

\DeclareMathOperator{\Sym}{Sym}
\DeclareMathOperator{\Str}{Str}
\DeclareMathOperator{\Diag}{Diag}

%
%
%
%

\newcommand{\Eins}{\mathbf{1}}

\newcommand{\IZ}{\mathbb{Z}}
\newcommand{\bft}{\mathbf{t}}

\newcommand{\mfV}{\mathfrak{V}}

\newcommand{\Her}{\mathop\mathrm{Her}\nolimits}
\newcommand{\Symd}{\mathop\mathrm{Symd}\nolimits}
\newcommand{\Mat}{\mathop\mathrm{Mat}\nolimits}
\newcommand{\ch}{\mathop \mathrm{char}\nolimits}

\newcommand{\Zor}{\mathop \mathrm{Zor}\nolimits}
\newcommand{\kalg}{k\mathchar45\mathbf{alg}}

\renewcommand{\:}{\colon\,}


\numberwithin{equation}{subsection}

\renewcommand{\theenumi}{\roman{enumi}}

\begin{document}

\title[Outer automorphisms and a Skolem-Noether theorem]{Outer automorphisms of algebraic groups and a Skolem-Noether theorem for Albert algebras}
\author{Skip Garibaldi}
\address{Garibaldi: Institute for Pure and Applied Mathematics, UCLA, 460 Portola Plaza, Box 957121, Los Angeles, California 90095-7121, USA}
\email{skip@member.ams.org}

\author{Holger P. Petersson}
\address{Petersson: Fakult\"at f\"ur Mathematik und Informatik, FernUniversit\"at in Hagen, D-58084 Hagen, Germany}
\email{holger.petersson@fernuni-hagen.de}


\dedicatory{Dem Andenken Reinhard B\"orgers gewidmet}

\subjclass[2010]{Primary 20G41; Secondary 11E72, 17C40, 20G15}

\begin{abstract}
The question of existence of outer automorphisms of a simple algebraic group $G$ arises naturally both when working with the Galois cohomology of $G$ and as an example of the algebro-geometric problem of determining which connected components of $\Aut(G)$ have rational points.  The existence question remains open only for four types of groups, and we settle one of the remaining cases, type $^3D_4$. The key to the proof is a Skolem-Noether theorem for cubic \'etale subalgebras of Albert algebras which is of independent interest. Necessary and sufficient conditions for a simply connected group of outer type $A$ to admit outer automorphisms of order 2 are also given.
\end{abstract}

\maketitle

\setcounter{tocdepth}{1}

\tableofcontents


\section{Introduction}

An algebraic group $H$ defined over an algebraically closed field $F$ is a disjoint union of connected components.  The component $H^\circ$ containing the identity element is a normal subgroup in $H$ that acts via multiplication on each of the other components.  Picking an $F$-point $x$ in a connected component $X$ of $H$ gives an isomorphism of varieties with an $H^\circ$-action $H^\circ \iso X$ via $h \mapsto hx$.

When $F$ is not assumed to be algebraically closed, the identity component $H^\circ$ is still defined as an $F$-subgroup of $H$, but the other components need not be.  Suppose $X$ is a connected subvariety of $H$ such that, after base change to the algebraic closure $\Falg$ of $F$, $X \times \Falg$ is a connected component of $H \times \Falg$.  Then, by the previous paragraph, $X$ is an $H^\circ$-torsor, but $X$ may have no $F$-points.  We remark that the question of whether $X$ has an $F$-point  arises when describing the embedding of the category of compact real Lie groups into the category of linear algebraic groups over $\R$, see \cite[\S5]{Se:Gebres}.

\subsection{Outer automorphisms of algebraic groups}
We will focus on the case where $H = \Aut(G)$ and $G$ is semisimple, which amounts to asking about the existence of outer automorphisms of $G$.  This question has previously been studied in  \cite{MT}, \cite{PreetiTig},  \cite{G:outer}, \cite{ChKT},  \cite{ChEKT}, and \cite{KT:3D4}.  Writing $\Delta$ for the Dynkin diagram of $G$ endowed with the natural action by the Galois group $\Gal(\Fsep/F)$ gives an exact sequence of group schemes 
\[
\begin{CD}
1 @>>> \aut(G)^\circ @>>> \aut(G) @>\alpha>> \Aut(\D) 
\end{CD}
\]
as in \cite[Chap.~XXIV, Th.~1.3 and \S3.6]{SGA3} or \cite[\S16.3]{Sp:LAG}, hence a natural map $\alpha(F) \!: \aut(G)(F) \to \aut(\D)(F)$.  Note that $\aut(\D)(\Falg)$ is identified with the connected components of $\Aut(G) \times \Falg$ in such a way that $\aut(\D)(F)$ is identified with those components  that are defined over $F$.  We ask: is $\alpha(F)$ onto?  That is, \emph{which of the components of $\aut(G)$ that are defined over $F$ also have an $F$-point?}  

Sending an element $g$ of $G$ to conjugation by $g$ defines a surjection $G \to \aut(G)^\circ$, and the $F$-points $\aut(G)^\circ(F)$ are called \emph{inner} automorphisms. The $F$-points of the other components of $\aut(G)$ are called \emph{outer}.  Therefore, our question may be rephrased as: \emph{Is every automorphism of the Dynkin diagram induced from an $F$-automorphism of $G$?}  

One can quickly observe that $\alpha(F)$ need not be onto, for example, with the group $\SL(A)$ where $A$ is a central simple algebra of odd exponent, where an outer automorphism would amount to an isomorphism of $A$ with its opposite algebra.  This is a special case of a general cohomological obstruction.  Namely, writing $Z$ for the scheme-theoretic center of the simply connected cover of $G$, $G$ naturally defines an element $t_G \in H^2(F, Z)$ called the \emph{Tits class} as in \cite[4.2]{Ti:R} or \cite[31.6]{KMRT}.  (The cohomology used in this paper is fppf.) For every character $\chi \!: Z \to \Gm$, the image $\chi(t_G) \in H^2(F, \Gm)$ is known as a Tits algebra of $G$; for example, when $G = \SL(A)$, $Z$ is identified with the group of $(\deg A)$-th roots of unity, the group of characters is generated by the natural inclusion $\chi \!: Z \hookrightarrow \Gm$, and $\chi(t_{\SL(A)})$ is the class of $A$.  (More such examples are given in \cite[\S27.B]{KMRT}.) This example illustrates also the general fact: $t_G = 0$ if and only if $\End_G(V)$ is a field for every irreducible representation $V$ of $G$.  The group scheme $\Aut(\D)$ acts on $H^2(F, Z)$, and it was shown in \cite[Th.~11]{G:outer} that this provides an obstruction to the surjectivity of $\alpha(F)$, namely:
\begin{equation} \label{alpha}
\im \left[\alpha \!: \Aut(G)(F) \to \Aut(\D)(F) \right] \subseteq \{ \pi \in \Aut(\D)(F) \mid \pi(t_G) = t_G \}.
\end{equation}

It is interesting to know when equality holds in \eqref{alpha}, because this information is useful in Galois cohomology computations.  (For example, when $G$ is simply connected, equality in \eqref{alpha} is equivalent to the exactness of $H^1(F, Z) \to H^1(F, G) \to H^1(F, \Aut(G))$.) Certainly, equality need not hold in \eqref{alpha}, for example when $G$ is semisimple (take $G$ to be the product of the compact and split real forms of $G_2$) or when $G$ is neither simply connected nor adjoint (take $G$ to be the split group $\SO_8$, for which $|{\im \alpha}| = 2$ but the right side of \eqref{alpha} has 6 elements).  However, when $G$ is simple and simply connected or adjoint, it is known that equality holds in \eqref{alpha} when $G$ has inner type or for some fields $F$. Therefore, one might optimistically hope that the following is true: 

\begin{conj} \label{outer.conj}
If $G$ is an absolutely simple algebraic group that is simply connected or adjoint, then equality  holds in \eqref{alpha}.
\end{conj}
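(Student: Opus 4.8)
The plan is to verify equality in \eqref{alpha} type by type, in each case reducing the statement to a question about the algebraic structure that parametrizes the forms of $G$. Since for $G$ simply connected or adjoint the whole group scheme $\Aut(G)$, its component group $\Aut(\Delta)$, the center $Z$ of the simply connected cover, and the Tits class $t_G$ all depend only on the common simply connected cover, I would first reduce to $G$ simply connected. Next, $\Aut(\Delta)$ is trivial unless $\Delta$ has type $A_n$ $(n\ge 2)$, $D_n$ $(n\ge 4)$ or $E_6$, and for inner forms equality in \eqref{alpha} is already known; so one may assume $G$ is an outer form of one of these three families, and then $\Aut(\Delta)(F)$ is cyclic of order $2$ or $3$ (or trivial). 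Since such a group has only the subgroups $\{1\}$ and itself, the right-hand side of \eqref{alpha} is either $\{1\}$, leaving nothing to prove, or all of $\Aut(\Delta)(F)$; in the latter case the task is to show that, assuming the nontrivial diagram automorphism $\pi$ fixes $t_G$, there is an $F$-automorphism of $G$ inducing $\pi$.

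For outer type $A$ I would write $G = \mathrm{SU}(B,\tau)$ with $B$ central simple of degree $n+1$ over a quadratic \'etale $F$-algebra $K$ and $\tau$ an involution of the second kind. The hypothesis $\pi(t_G) = t_G$ unwinds to a condition on the Tits algebras of $G$ — the class of $B$ over $K$ and, for $n$ even, the discriminant algebra over $F$ — and the point would be to show that this condition is exactly what is needed to produce a second involution on $B$ from which an outer automorphism of order $2$ is built; carrying this out gives the necessary and sufficient conditions advertised in the abstract. The parallel analysis with orthogonal or symplectic involutions, the discriminant algebra, and the Clifford algebra handles outer type $D_n$ for $n\ge 5$, and the degree-$3$ Tits algebra handles outer type $E_6$.

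The real obstacle is type $D_4$, and among the trialitarian forms only $^3D_4$ is substantive: for $^6D_4$ the Galois group acts on $\Aut(\Delta)=S_3$ through all of $S_3$, so $\Aut(\Delta)(F)=Z(S_3)=1$ and there is nothing to prove. For $^3D_4$, $\Aut(\Delta)(F)$ is cyclic of order $3$, generated by a $3$-cycle $\pi$, and none of the order-$2$ arguments apply. Here I would pass to the exceptional Jordan algebra picture: a simply connected group of type $D_4$ is realized as the pointwise stabilizer in $\Aut(J)$ of a cubic \'etale subalgebra $E$ of an Albert algebra $J$, with the outer type of $G$ matching the \'etale type of $E$ — so for $^3D_4$ the algebra $E$ is a cyclic cubic field — and conjugation by an automorphism of $J$ restricting to a generator $\phi_0$ of $\Gal(E/F)$ induces an $F$-automorphism of $G$ realizing $\pi$. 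Thus, granting \eqref{alpha}, the conjecture for $^3D_4$ would follow from a \emph{Skolem--Noether theorem for Albert algebras} to the effect that an isomorphism between cubic \'etale subalgebras of an Albert algebra extends to an automorphism of the whole algebra. That statement is false as it stands, so the crux — and the hard part of the whole program — will be to pin down the precise extra hypotheses under which it does hold, to check that these hypotheses are equivalent to $\pi(t_G) = t_G$, and then to prove the corrected statement by a case analysis over reduced and division Albert algebras that tracks the cohomological invariants of the pair $(J,E)$. The outer types that would remain open afterwards would need analogues of this Skolem--Noether theorem that are not yet available.
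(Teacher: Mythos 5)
There is a genuine gap -- in fact two.  First, the statement you are proving is a \emph{conjecture} that the paper itself leaves open except in special cases: the paper establishes equality in \eqref{alpha} only for type $^3D_4$ (Theorem \ref{MT.D4}) and for type $A_n$ with $n$ even (Cor.~\ref{conj.A}), and explicitly lists $^2A_n$ ($n$ odd), $^2D_n$, and $^2E_6$ as open.  Your plan treats exactly these cases as routine: for outer type $A$ you assert that the condition $\pi(t_G)=t_G$ ``is exactly what is needed to produce a second involution on $B$,'' but the paper's Theorem \ref{A.weak} shows that surjectivity of $\alpha(F)$ for $\SU(B,\tau)$ is equivalent to the existence of a $K$-linear anti-automorphism of $B$ commuting with $\tau$, and the authors state plainly that they do not know how to prove or disprove the existence of such an anti-automorphism in general (Example \ref{A.no} shows the related order-$2$ question can fail even when $B$ has exponent $2$).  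The ``parallel analysis'' you invoke for $^2D_n$ ($n\ge 5$) and $^2E_6$ is likewise not carried out anywhere and is precisely what remains open.  So the proposal is a program, not a proof, and its non-$D_4$ branches rest on unsubstantiated claims.

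Second, even in the crux case $^3D_4$ your proposed repair of the failed Skolem--Noether theorem goes in the wrong direction.  You suggest finding ``extra hypotheses'' under which an isomorphism of cubic \'etale subalgebras extends to an automorphism of $J$, and checking that these hypotheses are equivalent to $\pi(t_G)=t_G$.  But for $^3D_4$ the condition $\pi(t_G)=t_G$ already forces $t_G=0$ (Lemma \ref{tG.zero}), and $t_G=0$ is automatic once $G$ is realized inside an Albert algebra; nevertheless the extension of a generator of $\Gal(E/F)$ to $\Aut(J)$ can still fail (this is the Albert--Jacobson obstruction, and its persistence is what makes Example \ref{KT.counter} possible).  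Hence no hypothesis expressible through $t_G$ can characterize when the naive extension exists, and a case analysis aimed at such an equivalence would collapse.  The paper's resolution is different in kind: it proves an \emph{unconditional} weak Skolem--Noether theorem (Theorem \ref{t.SNOTWI}) in which the automorphism group is replaced by the structure group and the isomorphism $\varphi$ is twisted by $R_w$ for a norm-one $w\in E$; correspondingly $G$ is realized not as the pointwise stabilizer of $E$ in $\Aut(J)$ but as the group of norm isometries of $J$ fixing $E$ elementwise, so that conjugation by the element $\psi\in\Str(J)$ furnished by Theorem \ref{t.SNOTWI} normalizes $G$ and induces the required outer automorphism, with a separate lifting argument to handle characteristics $2$ and $3$.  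Your outline does identify the right arena (Albert algebras, cubic \'etale subalgebras, triality), but without the passage to the structure group and the $R_w$-twist the key step does not go through.
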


The remaining open cases are where $G$ has type $^2A_n$ for odd $n \ge 3$ (the case where $n$ is even is Cor.~\ref{conj.A}), $^2D_n$ for $n \ge 3$, $^3D_4$, and $^2E_6$.
Most of this paper is dedicated to settling one of these four cases.

\begin{specthm} \label{MT.D4}
If $G$ is a simple algebraic group of type $^3D_4$ over a field $F$, then equality holds in \eqref{alpha}.
\end{specthm}

One can ask also for a stronger property to hold:

\begin{ques} \label{question.refined}
Suppose $\pi$ is in $\alpha(\aut(G)(F))$.  Does there exist a $\phi \in \Aut(G)(F)$ so that $\alpha(\phi) = \pi$ and $\phi$ and $\pi$ have the same order?
\end{ques}

This question, and a refinement of it where one asks for detailed information about the possible $\phi$'s, was considered for example in \cite{MT}, \cite{PreetiTig}, \cite{ChKT},  \cite{ChEKT}, and \cite{KT:3D4}.  It was observed in \cite{G:outer} that the answer to the question is ``yes'' in all the cases where the conjecture is known to hold.  However, \cite{KT:3D4} gives an example of a group $G$ of type $^3D_4$ that does not have an outer automorphism of order 3, yet the conjecture holds for $G$ by Theorem \ref{MT.D4}.  That is, combining the results of this paper and \cite{KT:3D4} gives the first example where the conjecture holds for a group but the answer to Question \ref{question.refined} is  ``no'', see Example \ref{KT.counter}

In other sections of the paper, we translate the conjecture for groups of type $A$ into one in the language of algebras with involution as in \cite{KMRT}, give a criterion for the existence of outer automorphisms of order 2 (i.e., prove a version for type $A$ of the main result of \cite{KT:3D4}), and exhibit a group of type $^2A$ that does not have an outer automorphism of order 2.  

\subsection{Skolem-Noether Theorem for Albert algebras} \label{ss.SKONOAL}  
In order to prove Theorem \ref{MT.D4}, we translate it into a statement about Albert $F$-algebras, 27-dimensional exceptional central simple Jordan algebras.  We spend the majority of the paper working with Jordan algebras.

Let $J$ be an Albert algebra over a field $F$ and suppose $E,E^\prime \subseteq J$ are cubic \'etale subalgebras. It is known since Albert-Jacobson \cite{MR0088487} that in general an isomorphism $\varphi\:E \to E^\prime$ cannot be extended to an automorphism of $J$. Thus the Skolem-Noether Theorem fails to hold for cubic \'etale subalgebras of Albert algebras. In fact, even in the important special case that $E = E^\prime$ is split and $\varphi$ is an automorphism of $E$ having order $3$, obstructions to the validity of this result may be read off from \cite[Th.~9]{MR0088487}. We provide a way out of this impasse by replacing the automorphism group of $J$ by its structure group and allowing the isomorphism $\varphi$ to be twisted by the right multiplication of a norm-one element in $E$. More precisely, referring to our notational conventions in Sections~\ref{s.CONV}$\--$\ref{s.CUJO} below, we will establish the following result.

\begin{specthm} \label{t.SNOTWI} {Let $\varphi\:E \overset{\sim} \to E^\prime$ be an isomorphism of cubic \'etale subalgebras of an Albert algebra $J$ over a field $F$. Then there exists an element $w \in E$ satisfying $N_E(w) = 1$ such that $\varphi \circ R_w\:E \to E^\prime$ can be extended to an element of the structure group of $J$.} 
\end{specthm}

 Note that no restrictions on the characteristic of $F$ will be imposed. In order to prove Theorem \ref{t.SNOTWI}, we first derive its analogue (in fact, a substantial generalization of it, see Th.~\ref{t.SKONOIT} below) for absolutely simple Jordan algebras of degree $3$ and dimension $9$ in place of $J$. This generalization is based on the notions of weak and strong equivalence for isotopic embeddings of cubic \'etale algebras into cubic Jordan algebras (\ref{ss.WESO}) and is derived here by elementary manipulations of the two Tits constructions. After a short digression into norm classes for pairs of isotopic embeddings in \S~6, Theorem \ref{t.SNOTWI} is established by combining Th.~\ref{t.SKONOIT} with a density argument and the fact that an isotopy between absolutely simple nine-dimensional subalgebras of an Albert algebra can always be extended to an element of its structure group (Prop.~\ref{p.SKONOIN}). 
 

\subsection{Conventions.} \label{s.CONV}  Throughout this paper, we fix a base field $F$ of arbitrary characteristic. All linear non-associative algebras (in particular, all composition algebras) are tacitly assumed to contain an identity element. If $C$ is such an algebra, we write $R_v\:C \to C$ for the right multiplication by $v \in C$, and $C^ \times$ for the collection of invertible elements in $C$, whenever this makes sense. For a field extension (or any commutative associative algebra) $K$ over $F$, we denote by $C_K := C \otimes K$ the scalar extension (or base change) of $C$ from $F$ to $K$, unadorned tensor products always being taken over $F$. In other terminological and notational conventions, we mostly follow  \cite{KMRT}. 
In fact, the sole truly significant deviation from this rule is presented by the theory of Jordan algebras: while \cite[Chap.~IX]{KMRT} confines itself to the linear version of this theory, which works well only over fields of characteristic not $2$ or, more generally, over commutative rings containing $\frac{1}{2}$, we insist on the quadratic one, surviving as it does in full generality over arbitrary commutative rings. For convenience, we will assemble the necessary background material in the next two sections of this paper.


\section{Jordan algebras} \label{s.JOAL}  The purpose of this section is to present a dictionary for the standard vocabulary of arbitrary Jordan algebras. Our main reference is \cite{MR634508}.

\subsection{The concept of a Jordan algebra} \label{ss.COJA} By a (unital quadratic) \emph{Jordan algebra} over $F$, we mean an $F$-vector space $J$ together with a quadratic map $x \mapsto U_x$ from $J$ to $\End_F(J)$ (the \emph{$U$-operator}) and a distinguished element $1_J \in J$ (the \emph{unit} or \emph{identity element}) such that, writing
\[
\{xyz\} := V_{x,y}z := U_{x,z}y := (U_{x+z} - U_x - U_z)y
\]
for the associated \emph{triple product}, the equations
\begin{align}
U_{1_J} =\,\,&\Eins_J, \notag \\
\label{FUFO} U_{U_xy} =\,\,&U_xU_yU_x &&(\text{fundamental formula}), \\
U_xV_{y,x} =\,\,&V_{x,y}U_x \notag
\end{align}
hold in all scalar extensions. We always simply write $J$ to indicate a Jordan algebra over $F$, $U$-operator and identity element being understood. A \emph{subalgebra} of $J$ is an $F$-subspace containing the identity element and stable under the operation $U_xy$; it is then a Jordan algebra in its own right. A \emph{homomorphism} of Jordan algebras over $F$ is an $F$-linear map preserving $U$-operators and identity elements. In this way we obtain the category of Jordan algebras over $F$. By definition, the property of being a Jordan algebra is preserved by arbitrary scalar extensions. In keeping with the conventions of Section~\ref{s.CONV}, we write $J_K$ for the base change of $J$ from $F$ to a field extension $K/F$.

\subsection{Linear Jordan algebras} \label{ss.LIJO} Assume $\ch(F) \neq 2$. Then Jordan algebras as defined in \ref{ss.COJA} and linear Jordan algebras as defined in \cite[\S~37]{KMRT} are virtually the same. Indeed, let $J$ be a unital quadratic Jordan algebra over $F$. Then $J$ becomes an ordinary non-associative $F$-algebra under the multiplication $x\cdot y := \frac{1}{2}U_{x,y}1_J$, and this $F$-algebra is a linear Jordan algebra in the sense that it is commutative and satisfies the \emph{Jordan identity} $x\cdot ((x\cdot x)\cdot y) = (x\cdot x)\cdot (x\cdot y)$. Conversely, let $J$ be a linear Jordan algebra over $F$. Then the $U$-operator $U_xy := 2x\cdot (x\cdot y) - (x\cdot x)\cdot y$ and the identity element $1_J$ convert $J$ into a unital quadratic Jordan algebra. The two constructions are inverse to one another and determine an isomorphism of categories between unital quadratic Jordan algebras and linear Jordan algebras over $F$.

\subsection{Ideals and simplicity} \label{ss.IDSI} Let $J$ be a Jordan algebra over $F$. A subspace $I \subseteq J$ is said to be an \emph{ideal} if $U_IJ + U_JI + \{IIJ\} \subseteq J$. In this case, the quotient space $J/I$ carries canonically the structure of a Jordan algebra over $F$ such that the projection $J \to J/I$ is a homomorphism. A Jordan algebra is said to be \emph{simple} if it is non-zero and there are no ideals other than the trivial ones. We speak of an \emph{absolutely simple} Jordan algebra if it stays simple under all base field extensions. (There is also a notion of central simplicity which, however, is weaker than absolute simplicity, although the two agree for $\ch(F) \neq 2$.)

\subsection{Standard examples} \label{ss.STEX} First, let $A$ be an associative $F$-algebra. Then the vector space $A$ together with the $U$-operator $U_xy := xyx$ and the identity element $1_A$ is a Jordan algebra over $F$, denoted by $A^+$. If $A$ is simple, then so is $A^+$ \cite[15.5]{MR946263}.

Next, let $(B,\tau)$ be an $F$-algebra with involution, so $B$ is a non-associative algebra over $F$ and $\tau\:B \to B$ is an $F$-linear anti-automorphism of period $2$. Then
\begin{align*}
\Symd(B,\tau) = \{x + \tau(x)\mid x \in B\} \subseteq H(B,\tau) := \Sym(B,\tau) = \{x \in B \mid \tau(x) = x\} 
\end{align*}
are subspaces of $B$, and we have $\Symd(B,\tau) = H(B,\tau)$ for $\ch(F) \neq 2$ but not in general. Moreover, if $B$ is associative, then $\Symd(B,\tau)$ and $H(B,\tau)$ are both subalgebras of $B^+$, hence are Jordan algebras which are simple if $(B,\tau)$ is simple as an algebra with involution \cite[15.5]{MR946263}.

\subsection{Powers} \label{ss.PO} Let $J$ be a Jordan algebra over $F$. The powers of $x \in J$ with integer exponents $n \geq 0$ are defined recursively by $x^0 = 1_J$, $x^1 = x$, $x^{n+2} = U_xx^n$. Note for $J = A^+$ as in \ref{ss.STEX}, powers in $J$ and in $A$ are the same. For $J$ arbitrary, they satisfy the relations
\begin{align}
\label{PORU} U_{x^m}x^n = x^{2m+n}, \quad \{x^mx^nx^p\} = 2x^{m+n+p}, \quad (x^m)^n = x^{mn},
\end{align}
hence force
\[
F[x] := \sum_{n\geq 0} Fx^n 
\]
to be a subalgebra of $J$. In many cases --- e.g., if $\ch(F) \neq 2$ or if $J$ is simple (but not always \cite[1.31, 1.32]{MR634508}) --- there exists a commutative associative $F$-algebra $R$, necessarily unique, such that $F[x] = R^+$ \cite[Prop.~1]{MR0271181}, \cite[Prop.~4.6.2]{MR634508}. By abuse of language, we simply write $R = F[x]$ and say $R$ is a subalgebra of $J$.

In a slightly different, but similar, vein we wish to talk about \'etale subalgebras of a Jordan algebra. This is justified by the fact that \'etale $F$-algebras are comletely determined by their Jordan structure. More precisely, we have the following simple result.

\begin{lem} \label{l.PLUS} {Let $E,R$ be commutative associative $F$-algebras such that $E$ is finite-dimensional \'etale and $E^+ = R^+$ as Jordan algebras. Then $E = R$ as commutative associative algebras.}
\end{lem}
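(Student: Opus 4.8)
The idea is that the multiplication of a finite-dimensional étale $F$-algebra $E$ can be recovered from its Jordan ($U$-operator) structure, so if $E^+ = R^+$ then the two commutative associative multiplications must coincide. The cleanest route is to reduce to the case where the base field is large enough that $E$ (and hence $R$) splits as a product of copies of the field, where everything becomes transparent, and then descend.

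First I would observe that it suffices to prove $E = R$ after a faithfully flat base change, since the multiplication maps $E \otimes E \to E$ and $R \otimes R \to R$ are $F$-linear and agree iff they agree after such a base change. Passing to a suitable finite separable (or even algebraically closed) extension $K/F$, the algebra $E_K$ becomes a finite product $K \times \cdots \times K$ of copies of $K$. Its primitive idempotents $e_1, \dots, e_n$ are intrinsic to the Jordan algebra $E_K^+$: they are exactly the nonzero idempotents that cannot be written as a sum of two nonzero orthogonal idempotents, orthogonality and the idempotent condition being expressible purely via the $U$-operator ($U_e e = e$ characterizes idempotents, and orthogonality of idempotents $e,f$ is $U_e f = 0$). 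Hence $R_K^+ = E_K^+$ forces $R_K$ to have the same complete orthogonal system of primitive idempotents, with $1 = e_1 + \cdots + e_n$. Since in a commutative associative algebra multiplication is determined on the span of a complete system of orthogonal idempotents spanning the whole algebra by $e_i e_j = \delta_{ij} e_i$, and this product is recoverable as $e_i e_j = U_{e_i, e_j} 1 / 2$ when $\ch \neq 2$, or more carefully as the Peirce product which is built from the $U$-operator in all characteristics, we get that $R_K$ and $E_K$ have identical multiplication tables in the basis $(e_i)$. Therefore $E_K = R_K$, and descending along $K/F$ gives $E = R$.

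The one point requiring care, and the main obstacle, is handling $\ch(F) = 2$: there one cannot write $xy = \tfrac12 U_{x,y}1$. The fix is to note that for idempotents we still have the Peirce decomposition available from the quadratic Jordan theory — $U_{e_i, e_j}1$ and the triple products $\{e_i 1 e_j\}$ are defined and, on the split algebra $K^n$, one computes directly that $\{e_i 1 e_j\} = 2\,e_ie_j = 0$ for $i \ne j$ while $U_{e_i} e_i = e_i$, so the $e_i$ are orthogonal idempotents in $R_K$ as well; since they span $R_K$ (their sum is $1$ and there are $n = \dim R_K$ of them, linearly independent because they are distinct primitive idempotents of the Jordan algebra), the associative algebra $R_K$ is forced to be $K^n$ with $e_ie_j = \delta_{ij}e_i$, matching $E_K$. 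An alternative, perhaps slicker, argument avoiding characteristic worries altogether: the subalgebra $F[x] \subseteq E^+$ generated by any $x$ is, by the discussion in \ref{ss.PO}, a commutative associative algebra, and the powers $x^n$ computed in $E^+$ agree with those in $E$ (by the remark after the definition of powers, applied to $E^+ = E^{\mathrm{assoc},+}$); since $E$ is generated as an $F$-algebra by a single element $x$ when $E$ is a field, and in general by finitely many idempotents and such generators, one recovers all products $x^iy^j$ from Jordan data. Either way, the essential content is that étale algebras are rigid: their associative structure is visible inside their Jordan structure, so the equality $E^+ = R^+$ upgrades to $E = R$.
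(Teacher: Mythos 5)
Your main argument is correct, and it takes a genuinely different route from the paper's: the paper extends scalars so that $E$ becomes monogenic (an \'etale algebra over an infinite field is generated by a single element) and then just observes that powers are Jordan-intrinsic, so $R$ is spanned by the powers of that generator and the two multiplications agree; you instead split $E$ over a separable closure and recover the multiplication table from the complete system of primitive idempotents, which makes the structural reason for rigidity more visible at the cost of a longer reduction. Two details of your write-up should be repaired, though neither is fatal. First, $U_ee=e$ says $e^3=e$, not $e^2=e$; the intrinsic square is $e^2=U_e1_J$, and since $E^+=R^+$ share the unit this gives $e_i^2=e_i$ in $R_K$. Second, your characteristic-$2$ ``fix'' via $\{e_i 1 e_j\}=2e_ie_j$ is vacuous exactly in characteristic $2$ (the left-hand side is identically zero there), so it proves nothing; but no fix is needed, because the criterion you already stated works uniformly: from $e_i^2=U_{e_i}1_J=e_i$ and $U_{e_i}e_j=e_i^2e_j=e_ie_j=0$ in $R_K$ the $e_i$ are orthogonal idempotents of $R_K$, and since they span, the multiplication tables of $E_K$ and $R_K$ coincide, after which your descent along $K/F$ is fine. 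Finally, note that your ``alternative, slicker argument'' is essentially the paper's proof, but as sketched it has a gap: $E$ need not be monogenic over a finite base field, and recovering mixed products $x^iy^j$ ``from Jordan data'' is precisely the characteristic-$2$ difficulty you set out to avoid; the paper's remedy is to extend scalars first so that one generator suffices and only powers (cf.\ \ref{ss.PO}), which are Jordan-intrinsic, are needed.
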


\begin{proof} Extending scalars if necessary, we may assume that $E$ as a (unital) $F$-algebra is generated by a single element $x \in E$. But since the powers of $x$ in $E$ agree with those in $E^+ = R^+$, hence with those in $R$, the assertion follows. \end{proof}

\subsection{Inverses and Jordan division algebras} \label{ss.INJODI} Let $J$ be a Jordan algebra over $F$. An element $x \in J$ is said to be \emph{invertible} if the $U$-operator $U_x\:J \to J$ is bijective (equivalently, $1_J \in \mathrm{Im}(U_x)$), in which case we call $x^{-1} := U_x^{-1}x$ the \emph{inverse} of $x$ in $J$. Invertibility and inverses are preserved by homomorphisms. It follows from the fundamental formula \eqref{FUFO} that, if $x,y \in J$ are invertible, then so is $U_xy$ and $(U_xy)^{-1} = U_{x^{-1}}y^{-1}$. Moreover, setting $x^n := (x^{-1})^{-n}$ for $n \in \IZ$, $n < 0$, we have \eqref{PORU} for all $m,n,p \in \IZ$. In agreement with earlier conventions, the set of invertible elements in $J$ will be denoted by $J^\times$. If $J^\times = J \setminus \{0\} \neq \emptyset$, then we call $J$ a \emph{Jordan division algebra}.
If $A$ is an associative algebra, then $(A^+)^\times = A^\times$, and the inverses are the same. Similarly, if $(B,\tau)$ is an associative algebra with involution, then $\Symd(B,\tau)^\times =
\Symd(B,\tau) \cap B^\times$, $H(B,\tau)^\times = H(B,\tau) \cap B^\times$, and, again, in both cases, the inverses are the same. 

\subsection{Isotopes} \label{ss.IS} Let $J$ be a Jordan algebra over $F$ and $p \in J^\times$. Then the vector space $J$ together with the $U$-operator $U_x^{(p)} := U_xU_p$ and the distinguished element $1_J^{(p)} := p^{-1}$ is a Jordan algebra over $F$, called the \emph{$p$-isotope} (or simply an \emph{isotope}) of $J$ and denoted by $J^{(p)}$. We have $J^{(p)\times} = J^\times$ and $(J^{(p)})^{(q)} = J^{(U_pq)}$ for all $q \in J^\times$, which implies $(J^{(p)})^{(q)} =J$ for $q := p^{-2}$. Passing to isotopes is functorial in the following sense: If $\varphi\:J \to J^\prime$ is a homomorphism of Jordan algebras, then so is $\varphi\:J^{(p)} \to J^{\prime(\varphi(p))}$, for any $p \in J^\times$.

Let $A$ be an associative algebra over $F$ and $p \in  (A^+)^\times = A^\times$.  Then right multiplication by $p$ in $A$ gives an isomorphism $R_p\:(A^+)^{(p)} \overset{\sim}\to A^+$ of Jordan algebras. On the other hand, if $(B,\tau)$ is an associative algebra with involution, then so is $(B,\tau^{(p)})$, for any $p \in H(B,\tau)^\times$, where $\tau^{(p)}\:B \to B$ via $x \mapsto p^{-1}\tau(x)p$ stands for the \emph{$p$-twist} of $\tau$, and 
\begin{align}
\label{RIP} R_p\:H(B,\tau)^{(p)} \overset{\sim} \longrightarrow H(B,\tau^{(p)})
\end{align} 
is an isomorphism of Jordan algebras.

\subsection{Homotopies  and the structure group} \label{ss.HOI} If $J,J^\prime$ are Jordan algebras over $F$, a \emph{homotopy} from $J$ to $J^\prime$ is a homomorphism $\varphi\:J \to J^{\prime(p^\prime)}$ of Jordan algebras, for some $p^\prime \in J^{\prime\times}$. In this case, $p^\prime = \varphi(1_J)^{-1}$ is uniquely determined by $\varphi$. Bijective homotopies are called \emph{isotopies}, while injective homotopies are called \emph{isotopic embeddings}. The set of isotopies from $J$ to itself is a subgroup of $\GL(J)$, called the \emph{structure group} of $J$ and denoted by $\Str(J)$.  It consists of all linear bijections $\eta\:J \to J$ such that some linear bijection $\eta^\sharp\:J \to J$ satisfies $U_{\eta(x)} = \eta U_x\eta^\sharp$ for all $x \in J$. The structure group contains the automorphism group of $J$ as a subgroup; more precisely, $\Aut(J)$ is the stabilizer of $1_J$ in $\Str(J)$. Finally, thanks to the fundamental formula \eqref{FUFO}, we have $U_y \in \Str(J)$ for all $y \in J^\times$. 


\section{Cubic Jordan algebras} \label{s.CUJO}  In this section, we recall the main ingredients of the approach to a particularly important class of Jordan algebras through the formalism of cubic norm structures. Our main references are \cite{MR0238916} and \cite{MR0404370}. Systematic use will be made of the following notation: given a polynomial map $P\:V \to W$ between vector spaces $V,W$ over $F$ and $y \in V$, we denote by $\partial_yP\:V \to W$ the polynomial map given by the derivative of $P$ in the direction $y$.

\subsection{Cubic norm structures} \label{ss.CONO} By a \emph{cubic norm structure} over $F$ we mean a quadruple $X = (V,c,\sharp,N)$ consisting of a vector space $V$ over $F$, a distinguished element $c \in V$ (the \emph{base point}), a quadratic map $x \mapsto x^\sharp$ from $V$ to $V$ (the \emph{adjoint}), with bilinearization $x \times y := (x + y)^\sharp - x^\sharp - y^\sharp$, and a cubic form $N\:V \to F$ (the \emph{norm}), such that, writing
\begin{align*}
T(y,z) := (\partial_yN)(c)(\partial_zN)(c) - (\partial_y\partial_zN)(c) &&(y,z \in V)
\end{align*}
for the (bilinear) \emph{trace} of $X$ and $T(y) := T(y,c)$ for the linear one, the equations
\begin{align}
\label{BAPO} c^\sharp = c,\;\;& N(c) = 1 &&(\text{base point identities}), \\
\label{UNID} c \times x =\,\,&T(x)c - x &&(\text{unit identity}), \\
\label{GRID} (\partial_yN)(x) =\,\,&T(x^\sharp,y) &&(\text{gradient identity}), \\
\label{ADID} x^{\sharp\sharp} =\,\,&N(x)x &&(\text{adjoint identity})
\end{align}
hold in all scalar extensions. A subspace of $V$ is called a \emph{cubic subnorm structure} of $X$ if it contains the base point and is stable under the adjoint map.; it may then canonically be regarded  as a cubic norm structure in its own right. A \emph{homomorphism} of cubic norm structures is a linear map of the underlying vector spaces preserving base points, adjoints and norms. A cubic norm structure $X$ as above is said to be \emph{non-singular} if $V$ has finite dimension over $F$ and the bilinear trace $T\:V \times V \to F$ is a non-degenerate symmetric  bilinear form. If $X$ and $Y$ are cubic norm structures over $F$, with $Y$ non-singular, and $\varphi\:X \to Y$ is a surjective linear map preserving base points and norms, then $\varphi$ is an isomorphism of cubic norm structures \cite[p.~507]{MR0238916}.

\subsection{The associated Jordan algebra} \label{ss.ASJO} Let $X = (V,c,\sharp,N)$ be a cubic norm structure over $F$ and write $T$ for its bilinear trace. Then the $U$-operator
\begin{align}
\label{UCUB} U_xy := T(x,y)x - x^\sharp \times y 
\end{align} 
and the base point $c$ convert the vector space $V$ into a Jordan algebra over $F$, denoted by $J(X)$ and called the Jordan algebra \emph{associated with} $X$. The construction of $J(X)$ is clearly functorial in $X$.  We have
\begin{align}
\label{NOU} N(U_xy) = N(x)^2N(y) &&(x,y \in J).
\end{align}
Jordan algebras isomorphic to $J(X)$ for some cubic norm structure $X$ over $F$ are said to be \emph{cubic}. For example, let $J$ be a Jordan algebra over $F$ that is generically algebraic (e.g., finite-dimensional) of degree $3$ over $F$. Then $X = (V,c,\sharp,N)$, where $V$ is the vector space underlying $J$, $c := 1_J$, $\sharp$ is the numerator of the inversion map, and $N := N_J$ is the generic norm of $J$, is a cubic norm structure over $F$ satisfying $J = J(X)$; in particular, $J$ is a cubic Jordan algebra. In view of this correspondence, we rarely distinguish carefully between a cubic norm structure and its associated Jordan algebra. Non-singular cubic Jordan algebras, i.e., Jordan algebras arising from non-singular cubic norm structures, by \cite[p.~507]{MR0238916} have no absolute zero divisors, so $U_x = 0$ implies $x = 0$. 

\subsection{Cubic \'etale algebras} \label{ss.CUET}  Let $E$ be a cubic \'etale $F$-algebra. Then Lemma~\ref{l.PLUS} allows us to identify $E = E^+$ as a generically algebraic Jordan algebra of degree $3$ (with $U$-operator $U_xy = x^2y$), so we may write $E = E^+ = J(V,c,\sharp,N)$ as in \ref{ss.ASJO}, where $c = 1_E$ is the unit element, $\sharp$ is the adjoint and $N = N_E$ is the norm of $E = E^+$. We also write $T_E$ for the (bilinear) trace of $E$. The discriminant (algebra) of $E$ will be denoted by $\Delta(E)$; it is a quadratic \'etale $F$-algebra \cite[18.C]{KMRT}.

\subsection{Isotopes of cubic norm structures} \label{ss.ISCU} Let $X = (V,c,\sharp,N)$ be a cubic norm structure over $F$. An element $p \in V$ is invertible in $J(X)$ if and only if $N(p) \neq 0$, in which case $p^{-1} = N(p)^{-1}p^\sharp$. Moreover,
\[
X^{(p)} := (V,c^{(p)},\sharp^{(p)},N^{(p)}),
\]
with $c^{(p)} := p^{-1}$, $x^{\sharp^{(p)}} := N(p)U_p^{-1}x^\sharp$, $N^{(p)} := N(p)N$, is again a cubic norm structure over $F$ , called the \emph{$p$-isotope} of $X$. This terminology is justified since the associated Jordan algebra $J(X^{(p)}) = J(X)^{(p)}$ is the $p$-isotope of $J(X)$. We also note that the bilinear trace of $X^{(p)}$ is given by
\begin{align}
\label{TRIS} T^{(p)}(y,z) = T(U_py,z) &&(y,z \in X)
\end{align}
in terms of the bilinear trace $T$ of $X$.  Combining the preceding considerations with \ref{ss.CONO}, we conclude that the structure group of a \emph{non-singular} cubic Jordan algebra agrees with its group of norm similarities.

\subsection{Cubic Jordan matrix algebras} \label{ss.CUMA} Let $C$ be a composition algebra over $F$, with norm $n_C$, trace $t_C$, and conjugation $v \mapsto \bar v := t_C(v)1_C - v$. For any diagonal matrix
\[
\Gamma = \diag(\gamma_1,\gamma_2,\gamma_3) \in \GL_3(F),
\]
the pair
\begin{align*}
\big(\Mat_3(C),\tau_\Gamma\big), \quad \tau_\Gamma(x) := \Gamma^{-1}\bar x^t\Gamma  &&(x \in \Mat_3(C)),
\end{align*}
is a non-associative $F$-algebra with involution, allowing us to consider the subspace
\[
\Her_3(C,\Gamma) := \Symd\big(\Mat_3(C),\tau_\Gamma\big) = \{x + \tau_\Gamma(x) \mid x \in \Mat_3(C)\} \subseteq \Mat_3(C),
\]
which is easily seen to agree with the space of elements $x \in \Mat_3(C)$ that are $\Gamma$-hermitian ($x = \Gamma^{-1}\bar x^t\Gamma$) and have scalars down the diagonal (the latter condition being automatic for $\ch(F) \neq 2$). In terms of the usual matrix units $e_{ij} \in \Mat_3(C)$, $1 \leq i,j \leq 3$, we therefore have
\[
\Her_3(C,\Gamma) = \sum (Fe_{ii} + C[jl]),
\]
the sum on the right being taken over all cyclic permutations $(ijl)$ of $(123)$, where
\[
C[jl] := \{v[jl] \mid v \in C\}, \quad v[jl] := \gamma_lve_{jl} + \gamma_j\bar ve_{lj}. 
\]
Now put $V := \Her_3(C,\Gamma)$ as a vector space over $F$, $c := \Eins_3$ (the $3 \times 3$ unit matrix) and define adjoint and norm on $V$ by
\begin{align*}
x^\sharp :=\,\,&\sum \Big(\big(\alpha_j\alpha_l - \gamma_j\gamma_ln_C(v_i)\big)e_{ii} + \big(-\alpha_iv_i + \gamma_i\overline{v_jv_l}\big)[jl]\Big), \\
N(x) :=\,\,&\alpha_1\alpha_2\alpha_3 - \sum \gamma_j\gamma_l\alpha_in_C(v_i) + \gamma_1\gamma_2\gamma_3t_C(v_1v_2v_3)
\end{align*}
for all $x = \sum(\alpha_ie_{ii} + v_i[jl])$ in all scalar extensions of $V$. Then $X := (V,c,\sharp,N)$ is a cubic norm structure over $F$. Henceforth, the symbol $\Her_3(C,\Gamma)$ will stand for this cubic norm structure but also for its associated cubic Jordan algebra. We always abbreviate $\Her_3(C) := \Her_3(C,\Eins_3)$.

\subsection{Albert algebras} \label{ss.ALAL} Writing $\Zor(F)$ for the split octonion algebra of Zorn vector matrices over $F$, the cubic Jordan matrix algebra $\Her_3(\Zor(F))$ is called the \emph{split Albert algebra} over $F$. By an \emph{Albert algebra} over $F$, we mean an $F$-form of $\Her_3(\Zor(F))$, i.e., a Jordan algebra over $F$ (necessarily absolutely simple and non-singular of degree $3$ and dimension $27$) that becomes isomorphic to the split Albert algebra when extending scalars to the separable closure. Albert algebras are either \emph{reduced}, hence have the form $\Her_3(C,\Gamma)$ as in \ref{ss.CUMA}, $C$ an octonion algebra over $F$ (necessarily unique), or are cubic Jordan division algebras.

\subsection{Associative algebras of degree $3$ with unitary involution} \label{ss.ASIN} By an \emph{associative algebra of degree $3$ with unitary involution} over $F$ we mean a triple $(K,B,\tau)$ with the following properties: $K$ is a quadratic \'etale $F$-algebra, with norm $n_K$, trace $t_K$ and conjugation $\iota_K$, $a \mapsto \bar a$, $B$ is an associative algebra of degree $3$ over $K$ and $\tau\:B \to B$ is an $F$-linear involution that induces the conjugation of $K$ via restriction. All this makes obvious sense even in the special case that $K \cong F \times F$ is split, as do the generic norm, trace and adjoint of $B$, which are written as $N_B,T_B,\sharp$, respectively, connect naturally with the involution $\tau$ and agree with the corresponding notions for the cubic Jordan algebra $B^+$. In particular, $H(B,\tau)$ is a Jordan algebra of degree $3$ over $F$ whose associated cubic norm structure derives from that of $B^+$ via restriction.

An associative algebra $(K,B,\tau)$ of degree $3$ with unitary involution over $F$ is said to be \emph{non-singular} if the corresponding cubic Jordan algebra $B^+$ has this property, equivalently, if $B$ is free of finite rank over $K$  and $T_B\:B \times B \to K$ is a non-degenerate symmetric bilinear form in the usual sense.
 
\subsection{The second Tits construction} \label{ss.SETI} Let $(K,B,\tau)$ be an associative algebra of degree $3$ with unitary involution over $F$ and suppose we are given invertible elements $u \in H(B,\tau)$, $\mu \in K$ such that $N_B(u) = n_K(\mu)$. We put $V := H(B,\tau) \oplus Bj$ as the external direct sum of $H(B,\tau)$ and $B$ as vector spaces over $F$ to define base point, adjoint and norm on $V$ by the formulas
\begin{align}
\label{UNSE} c :=\,\,&1_B + 0\cdot j, \\
\label{ADSE} x^\sharp :=\,\,&(v_0^\sharp - vu\bar v) + (\bar\mu\bar v^\sharp u^{-1} - v_0v)j, \\
\label{NOSE} N(x) :=\,\,&N_B(v_0) + \mu N_B(v) + \bar\mu \overline{N_B(v)} - T_B\big(v_0,vu\tau(v)\big) 
\end{align}
for $x = v_0 + vj$, $v_0 \in H(B,\tau)$, $v \in B$ (and in all scalar extensions as well). Then we obtain a cubic norm structure $X := (V,c,\sharp,N)$ over $F$ whose associated cubic Jordan algebra will be denoted by $J := J(K,B,\tau,u,\mu) := J(X)$ and has the bilinear trace
\begin{align}
\label{BILSE} T(x,y) =\,\,&T_B(v_0,w_0) + T_B\big(vu\tau(w)\big) + T_B\big(wu\tau(v)\big) \notag \\
=\,\,&T_B(v_0,w_0) + t_K\Big(T_B\big(vu\tau(w)\big)\Big)
\end{align} 
for $x$ as above and $y = w_0 + wj$, $w_0 \in H(B,\tau)$, $w \in B$. It follows that, if $(K,B,\tau)$ is non-singular, then so is $J$. Note also that the cubic Jordan algebra $H(B,\tau)$ identifies with a subalgebra of $J$ through the initial summand. 
 
If, in addition to the above, $(B,\tau)$ is central simple as an algebra with involution over $F$, then $K$ is the centre of $B$, $J(B,\tau,u,\mu) := J(K,B,\tau,u,\mu)$ is an Albert algebra over $F$, and all Albert algebras can be obtained in this way. More precisely, every Albert algebra $J$ over $F$ contains a subalgebra isomorphic to $H(B,\tau)$ for some central simple associative algebra $(B,\tau)$ of degree $3$ with unitary involution over $F$, and every homomorphism $H(B,\tau) \to J$ can be extended to an isomorphism from $J(B,\tau,u,\mu)$ to $J$, for some invertible elements $u \in H(B,\tau)$, $\mu \in K$ satisfying $N_B(u) = n_K(\mu)$. 

 Our next result is a variant of \cite[Prop.~3.9]{MR86g:17020} which extends the isomorphism \eqref{RIP} in a natural way.

\begin{lem} \label{l.SETIS} {Let $(K,B,\tau)$ be a non-singular associative algebra of degree $3$ with unitary involution over $F$ and suppose $u \in H(B,\tau)$, $\mu \in K$ are invertible elements satisfying $N_B(u) = n_K(\mu)$. Then, given any $p \in H(B,\tau)^\times$, writing $\tau^{(p)}$ for the $p$-twist of $\tau$ in the sense of} \ref{ss.IS} \emph{and setting $u^{(p)} := p^\sharp u$, $\mu^{(p)} := N_B(p)\mu$, the following statements hold.} 
\begin{enumerate}
\renewcommand{\theenumi}{\alph{enumi}}
\item {$u^{(p)} \in H(B,\tau^{(p)})^\times$, $N_B(u^{(p)}) = n_K(\mu^{(p)})$ and $H(B,\tau^{(p)}) = H(B,\tau)p$.} 
\item {The map
\[
\hat{R}_p\:J(K,B,\tau,u,\mu)^{(p)} \overset{\sim} \longrightarrow J(K,B,\tau^{(p)},u^{(p)},\mu^{(p)}), \quad v_0 + vj \longmapsto v_0p + (p^{-1}vp)j,
\]
is an isomorphism of cubic Jordan algebras.}
\end{enumerate}
\end{lem}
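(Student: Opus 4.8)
The plan is to dispose of part (a) by direct computation with the standard degree-three identities, and then to deduce part (b) not by checking by brute force that $\hat{R}_p$ preserves the adjoint \eqref{ADSE}, but by invoking the criterion recalled at the end of \ref{ss.CONO}: a surjective linear map onto a \emph{non-singular} cubic norm structure that preserves base point and norm is automatically an isomorphism. Throughout I would use, for $p \in B^\times$, the identities $pp^\sharp = p^\sharp p = N_B(p)1_B$ and $p^\sharp = N_B(p)p^{-1}$, the multiplicativity of $N_B$ (whence $N_B(p^\sharp) = N_B(p)^2$ and $N_B(p^{-1}vp) = N_B(v)$), the compatibility $\tau(x^\sharp) = \tau(x)^\sharp$ of $\tau$ with the adjoint, the fact that $N_B(p) \in F$ when $p \in H(B,\tau)$ (since then $N_B(p) = N_B(\tau(p)) = \overline{N_B(p)}$), and the ``associativity'' $T_B(xy,z) = T_B(x,yz)$ of the generic trace.

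For part (a): the pair $(B,\tau^{(p)})$ is again an algebra with involution by \ref{ss.IS}; since $\tau^{(p)}$ fixes $F$, induces $\iota_K$ on the central subalgebra $K$, and leaves $B$ and $T_B$ untouched, $(K,B,\tau^{(p)})$ is again a non-singular associative algebra of degree $3$ with unitary involution. The identity $H(B,\tau^{(p)}) = H(B,\tau)p$ is immediate: writing $x = yp$, the condition $\tau^{(p)}(x) = x$, i.e.\ $\tau(x) = pxp^{-1}$, reduces via $\tau(p) = p$ to $\tau(y) = y$. Invertibility of $u^{(p)} = p^\sharp u$ is clear, and applying $\tau^{(p)}$ gives $\tau^{(p)}(p^\sharp u) = p^{-1}\big(\tau(u)\tau(p^\sharp)\big)p = p^{-1}(up^\sharp)p = N_B(p)p^{-1}u = p^\sharp u$, so $u^{(p)} \in H(B,\tau^{(p)})^\times$; finally $N_B(u^{(p)}) = N_B(p)^2N_B(u) = N_B(p)^2 n_K(\mu) = n_K\big(N_B(p)\mu\big) = n_K(\mu^{(p)})$, using $N_B(p) \in F$.

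For part (b): by (a), as $v_0$ runs through $H(B,\tau)$ the element $v_0p$ runs through $H(B,\tau^{(p)})$, while $p^{-1}vp$ runs through all of $B$, so $\hat{R}_p$ is a linear bijection onto $J^\prime := J(K,B,\tau^{(p)},u^{(p)},\mu^{(p)})$, which is non-singular by the remark following \eqref{BILSE}. By \ref{ss.ISCU} the source $J(K,B,\tau,u,\mu)^{(p)}$ has base point $p^{-1}$ and norm $N_B(p)N$ (using $N_J(p) = N_B(p)$, which is \eqref{NOSE} with $v = 0$), and clearly $\hat{R}_p(p^{-1}) = 1_B$, the base point of $J^\prime$. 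By the criterion of \ref{ss.CONO} it remains only to check that $\hat{R}_p$ preserves norms, i.e.\ that $N^\prime(\hat{R}_p(x)) = N_B(p)N(x)$ for all $x = v_0 + vj$, where $N^\prime$ is the norm of $J^\prime$. Writing $w_0 + wj := \hat{R}_p(x) = v_0p + (p^{-1}vp)j$ and substituting into \eqref{NOSE} for $J^\prime$, one has $N_B(w_0) = N_B(v_0)N_B(p)$; $\mu^{(p)}N_B(w) = N_B(p)\mu N_B(v)$ and $\overline{\mu^{(p)}}\,\overline{N_B(w)} = N_B(p)\bar\mu\,\overline{N_B(v)}$ since $N_B(w) = N_B(v)$ and $\overline{N_B(p)} = N_B(p)$; and, computing $\tau^{(p)}(w) = \tau(v)$ and hence $wu^{(p)}\tau^{(p)}(w) = (p^{-1}vp)(p^\sharp u)\tau(v) = N_B(p)\,p^{-1}vu\tau(v)$, the trace term is $T_B\big(v_0p,\,N_B(p)p^{-1}vu\tau(v)\big) = N_B(p)\,T_B\big(v_0,vu\tau(v)\big)$ by associativity of $T_B$. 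Every summand thus carries exactly one factor $N_B(p)$, giving $N^\prime(\hat{R}_p(x)) = N_B(p)N(x)$ as required; hence $\hat{R}_p$ is an isomorphism of cubic norm structures, and therefore of the associated cubic Jordan algebras by functoriality (\ref{ss.ASJO}).

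The only genuine computation is the trace term of \eqref{NOSE}, where the noncommutativity of $B$ must be respected: the decisive simplifications are the collapse $(p^{-1}vp)(p^\sharp u) = N_B(p)\,p^{-1}vu$ coming from $pp^\sharp = N_B(p)1_B$, and the identity $\tau^{(p)}(p^{-1}vp) = \tau(v)$, after which the associativity of $T_B$ absorbs the stray factor $p$. I expect no conceptual obstacle once one commits to establishing (b) through the base-point-and-norm criterion rather than through a head-on computation of the adjoint \eqref{ADSE}; the bookkeeping — tracking which quantities lie in $F$ as opposed to $K$, and keeping the order of factors in $B$ straight — is the only thing to be careful about.
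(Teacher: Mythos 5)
Your proposal is correct and follows essentially the same route as the paper: part (a) via $pp^\sharp = p^\sharp p = N_B(p)1_B$ (the paper gets $H(B,\tau^{(p)}) = H(B,\tau)p$ by citing \eqref{RIP}, whereas you verify it directly, a negligible difference), and part (b) by noting $\hat{R}_p$ is a base-point-preserving linear bijection onto a non-singular cubic norm structure and then checking norm preservation through exactly the computation with \eqref{NOSE}, including the key simplifications $\tau^{(p)}(p^{-1}vp)=\tau(v)$ and $(p^{-1}vp)(p^\sharp u)=N_B(p)p^{-1}vu$.
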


\begin{proof} (a) From $p^{-1} = N_B(p)^{-1}p^\sharp$ we conclude $pp^\sharp = N_B(p)1_B = p^\sharp p$, which implies the first assertion, but also the second since $N_B(p^\sharp) = N_B(p)^2$. The third one follows from \eqref{RIP}.

(b) By (a), \eqref{TRIS} and \ref{ss.SETI}, the map $\hat{R}_p$ is a linear bijection between non-singular cubic Jordan algebras preserving base points. By \ref{ss.CONO}, it therefore suffices to show that it preserves norms as well. Writing $N$ (resp.~$N^\prime$) for the norm of $J(K,B,\tau,u,\mu)$ (resp.~$J(K,B,\tau^{(p)}, u^{(p)},\mu^{(p)})$, we let $v_0 \in H(B,\tau)$, $v \in B$ and compute, using \eqref{NOSE},
\begin{align*}
(N^\prime \circ \hat{R}_p)(v_0 + vj) =\,\,&N^\prime(v_0p + (p^{-1}vp)j) \\
=\,\,&N_B(p)N_B(v_0) + N_B(p)\mu N_B(v) + N_B(p)\bar\mu\overline{N_B(v)} \\
\,\,&- T_B\big(v_0pp^{-1}vpp^\sharp u\tau^{(p)}(p^{-1}vp)\big) \\
=\,\,&N_B(p)\Big(N_B(v_0) + \mu N_B(v) + \bar{\mu}\overline{N_B(v)} - T_B\big(v_0vu\tau(v)\big)\Big) \\
=\,\,&N^{(p)}(v_0 + vj), 
\end{align*}
as desired. \end{proof}

\begin{rmk} The lemma holds without the non-singularity condition on $(K,B,\tau)$ but the proof is more involved.  \end{rmk}

 If the quadratic \'etale $F$-algebra $K$ in \ref{ss.SETI} is split, there is a less cumbersome way of describing the output of the second Tits construction.  
 
\subsection{The first Tits construction} \label{ss.FITI} Let $A$ be an associative algebra of degree $3$ over $F$ and $\mu \in F^\times$. Put $V := A \oplus Aj_1 \oplus Aj_2$ as the direct sum of three copies of $A$ as an $F$-vector space and define base point, adjoint and norm on $V$ by the formulas $c := 1_A + 0\cdot j_1 + 0\cdot j_2$,
\begin{align}
\label{SHAFI} x^\sharp :=\,\,&(v_0^\sharp - v_1v_2) + (\mu^{-1}v_2^\sharp - v_0v_1)j_1 + (\mu v_1^\sharp - v_2v_0)j_2, \\
\label{NOFI} N(x) :=\,\,&N_A(v_0) + \mu N_A(v_1) + \mu^{-1}N_A(v_2) - T_A(v_0v_1v_2)
\end{align}
for $x = v_0 + v_1j_1 + v_2j_2$, $v_0,v_1,v_2$ running over all scalar extensions of $A$. Then $X := (V,c,\sharp,N)$ is a cubic norm structure over $F$, with bilinear trace given by
\begin{align}
\label{TRAFI} T(x,y) = T_A(v_0,w_0) + T_A(v_1,w_2) + T_A(v_2,w_1)
\end{align}
for $x$ as above and $y = w_0 + w_1j_1 + w_2j_2$, $w_0,w_1,w_2 \in A$. The associated cubic Jordan algebra will be denoted by $J(A,\mu) := J(X)$. The Jordan algebra $A^+$ identifies with a cubic subalgebra of $J(A,\mu)$ through the initial summand, and if $A$ is central simple, then $J(A,\mu)$ is an Albert algebra, which is either split or division.
 
Now let $(K,B,\tau)$ be an associative algebra of degree $3$ with unitary involution over $F$ and suppose $\mu \in K^\times$, $u \in H(B,\tau)^\times$ satisfy $n_K(\mu) = N_B(u)$. If $K = F \times F$ is split, then $(B,\tau)$ identifies with $(A \times A^{\mathrm{op}},\varepsilon)$ for some associative algebra $A$ of degree $3$ over $F$, where $\varepsilon$ denotes the exchange involution. Moreover, $\mu = (\alpha,\beta)$, where $\alpha \in F$ is invertible, $\beta = \alpha^{-1}N_B(u)$, and there exists a canonical isomorphism $J := J(K,B,\tau,u,\mu) \cong J(A,\alpha) =:J^\prime$ matching $H(A \times A^{\mathrm{op}},\varepsilon)$ canonically with $A^+$ as subalgebras of $J,J^\prime$, respectively. On the other hand, if $K$ is a field, the preceding considerations apply to the base change from $F$ to $K$ and then yield an isomorphism $J(K,B,\tau,u,\mu)_K \cong J(B,\mu)$.
 
 
\section{The weak and strong Skolem-Noether properties} \label{s.DEFI}  As we have pointed out  in \ref{ss.SKONOAL}, extending an isomorphism between cubic \'etale subalgebras of an Albert algebra $J$ to an automorphism on all of $J$ will in general not be possible. Working with elements of the structure group rather than automorphisms, our Theorem \ref{t.SNOTWI} above is supposed to serve as a substitute for this deficiency. Unfortunately, however, this substitute suffers from deficiencies of its own since the natural habitat of the structure group is the category of Jordan algebras \emph{not} under homomorphisms \emph{but, instead,} under homotopies. 

Fixing a cubic Jordan algebra $J$ over our base field $F$ and a cubic \'etale $F$-algebra $E$ throughout this section, we therefore feel justified in phrasing the following formal definition.

\subsection{Weak and strong equivalence of isotopic embeddings} \label{ss.WESO} (a) Two isotopic embeddings $i,i^\prime\:E \to J$ in the sense of \ref{ss.HOI} are said to be \emph{weakly equivalent} if there exist an element $w \in E$ of norm $1$ and an element $\varphi \in \Str(J)$ such that the diagram
\begin{align}
\vcenter{\label{ISTEM} \xymatrix@R=0.5pc{
E \ar[rr]_{R_w} \ar[dd]_{i^\prime} &&
E \ar[dd]^{i} \\ \\
J \ar[rr]_{\varphi} && J}}
\end{align}
commutes. They are said to be \emph{strongly equivalent} if $\varphi \in \Str(J)$ can furthermore be chosen so that the diagram commutes with $w = 1$ (i.e., $R_w = \Id_E$).  
Weak and strong equivalence clearly define equivalence relations on the set of isotopic embeddings from $E$ to $J$. 

(b) The pair $(E,J)$ is said to satisfy the \emph{weak (resp.~strong) Skolem-Noether property for isotopic embeddings} if any two isotopic embeddings from $E$ to $J$ are weakly (resp.~strongly) equivalent.  The weak (resp.~strong) Skolem-Noether property for isomorphic embeddings is defined similarly, by restricting the maps $i$, $i'$ to be isomorphic embeddings instead of merely isotopic  ones.

\begin{rmk} \label{ss.SKALG}  In \ref{ss.WESO} we have defined four different properties, depending on whether one considers the weak or strong Skolem-Noether property for isotopic or isomorphic embeddings.  Clearly the combination weak/isomorphic is the weakest of these four properties and strong/isotopic is the strongest.

In the case where $J$ is an Albert algebra, Theorem \ref{t.SNOTWI} is equivalent to saying that the pair $(E,J)$ satisfies the weakest combination, the weak Skolem-Noether property for isomorphic embeddings.  On the other hand, suppose $i,i^\prime\:E \to J$ are isomorphic embeddings and $\varphi \in \Str(J)$ makes \eqref{ISTEM} commutative with $w = 1$. Then $\varphi$ fixes $1_J$ and hence is an automorphism of $J$. But such an automorphism will in general not exist \cite[Th.~9]{MR0088487}, and if it doesn't the pair $(E,J)$ will fail to satisfy the strong Skolem-Noether property for isomorphic embeddings.  In view of this failure, we are led quite naturally to the following (as yet) open question:
\begin{equation} \label{ss.OPQUE}
\parbox{4in}{\emph{Does the pair $(E,J)$, with $J$ absolutely simple (of degree $3$), always satisfy the weak Skolem-Noether property for isotopic embeddings?}}
\end{equation}
This is equivalent to asking whether, given two cubic \'etale subalgebras $E_1 \subseteq J^{(p_1)}$, $E_2 \subseteq J^{(p_2)}$ for some $p_1,p_2 \in J^\times$, every isotopy $\eta\:E_1 \to E_2$ allows a norm-one element $w \in E_1$ such that the isotopy $\eta \circ R_w\:E_1 \to E_2$ extends to an element of the structure group of $J$.
Regrettably, the methodological arsenal assembled in the present paper, consisting as it does of rather elementary manipulations involving the two Tits constructions, does not seem strong enough to provide an affirmative answer to this question. 

But in the case where $J$ is absolutely simple of dimension 9 --- i.e., the Jordan algebra of symmetric elements in a central simple associative algebra of degree $3$ with unitary involution over $F$ \cite[15.5]{MR946263} --- we will show in Th.~\ref{t.SKONOIT} below that the weak Skolem-Noether property for isotopic embeddings does hold.  This result, in turn, will be instrumental in proving Theorem \ref{t.SNOTWI} in section \S\ref{s.ALAL}.
\end{rmk}
 
 In phrasing Open Question~\ref{ss.OPQUE}, we could have gone one step further by bringing the theory of Jordan pairs \cite{MR0444721} into play. We will not do so since our methods do not readily adapt to the Jordan pair setting. Instead, we will confine ourselves to making the following remark.
 
\begin{rmk} \label{r.JOFRA} Assume in \ref{ss.OPQUE} that $E \cong F \times F \times F$ is split. Giving an isotopic embedding from $E$ to $J$ is then equivalent to giving a frame, necessarily of length $3$, in the Jordan pair $\mfV:= (J,J)$. But following Loos \cite[Cor.~3 of Th.~2]{MR1100842}, the diagonal Peirce components of two ordered frames in $\mfV$ can be matched by some element in the elementary group of $\mfV$, i.e., in a certain subgroup of the structure group of $J$, and this fact is easily seen to translate into the commutative diagram \eqref{ISTEM} after an appropriate choice of $w \in E^\times$ (possibly not of norm $1$) and $\varphi \in \Str(J)$.
\end{rmk}
 
 
\section{Cubic Jordan algebras of dimension $9$} \label{s.CUJONI}  Our goal in this section will be to answer Question~\ref{ss.OPQUE} affirmatively in case $J$ is a nine-dimensional absolutely simple cubic Jordan algebra over $F$. Before we will be  able to do so, a few preparations are required.
 
\subsection{Quadratic and cubic \'etale algebras} \label{ss.QUACU} (a) If $K$ and $L$ are quadratic \'etale algebras over $F$, then so is
\[
K \ast L := H(K \otimes L,\iota_K \otimes \iota_L),
\]
where $\iota_K$ and $\iota_L$ denote the conjugations of $K$ and $L$, respectively. The composition $(K,L) \mapsto K \ast L$ corresponds to the abelian group structure of $H^1(F,\IZ/2\IZ)$, which classifies quadratic \'etale $F$-algebras. 

(b) Next suppose $L$ and $E$ are a quadratic and cubic \'etale $F$-algebras, respectively.  Then $E \otimes L$ may canonically be viewed as a cubic \'etale $L$-algebra, whose norm, trace, adjoint will again be denoted by $N_E,T_E,\sharp$, respectively. On the other hand, $E \otimes L$ may also be viewed canonically as a quadratic \'etale $E$-algebra, whose norm, trace and conjugation will again be denoted by $n_L$, $t_L$, and $\iota_L$, $x \mapsto \bar x$, respectively. We may and always will identify $E \subseteq E \otimes L$ through the first factor and then have $E = H(E \otimes L,\iota_L)$.

\subsection{The \'etale Tits process} \cite[1.3]{MR2032452} \label{ss.DETI} Let $L$, resp.~$E$, be a quadratic, resp cubic, \'etale algebra over $F$ and as in \ref{ss.CUET} write $\Delta(E)$ for the discriminant of $E$, which is a quadratic \'etale $F$-algebra. With the conventions of \ref{ss.QUACU}~(b), the triple $(K,B,\tau) := (L,E \otimes L,\iota_L)$ is an associative algebra of degree $3$ with unitary involution over $F$ in the sense of \ref{ss.ASIN} such that $H(B,\tau) = E$. Hence, if $u \in E$ and $b \in L$ are invertible elements satisfying $N_E(u) = n_L(b)$, the second Tits construction \ref{ss.SETI} leads to a cubic Jordan algebra
\[
J(E,L,u,b) := J(K,B,\tau,u,b) = J(L,E \otimes L,\iota_L,u,b)
\]
that belongs to the cubic norm structure $(V,c,\sharp,N)$ where $V = E \oplus (E \otimes L)j$ as a vector space over $F$ and $c,\sharp,N$ are defined by \eqref{UNSE}--\eqref{NOSE} in all scalar extensions. The cubic Jordan algebra $J(E,L,u,b)$ is said to arise from $E,L,u,b$ by means of the \emph{\'etale Tits process}. There exists a central simple associative algebra $(B,\tau)$ of degree $3$ with unitary involution over $F$ uniquely determined by the condition that $J(E,L,u,b) \cong H(B,\tau)$, and by \cite[Th.~1]{MR86g:17020}, the centre of $B$ is isomorphic to $\Delta(E) \ast L$ (cf. \ref{ss.QUACU}~(a)) as a quadratic \'etale $F$-algebra.

For convenience, we now recall three results from \cite{MR2032452} that will play a crucial role in providing an affirmative answer to Question~\ref{ss.OPQUE} under the conditions spelled out at the beginning of this section.

\begin{thm} \label{t.IMEX} \emph{(\cite[1.6]{MR2032452})} {Let $E$ be a cubic \'etale $F$-algebra, $(B,\tau)$ a central simple associative algebra of degree $3$ with unitary involution over $F$ and suppose $i$ is an isomorphic embedding from $E$ to $H(B,\tau)$. Writing $K$ for the centre of $B$ and setting $L := K \ast \Delta(E)$, there are invertible elements $u \in E$, $b \in L$ satisfying $N_E(u) = n_L(b)$ such that $i$ extends to an isomorphism from the \'etale Tits process algebra $J(E,L,u,b)$ onto $H(B,\tau)$.} \hfill $\qed$
\end{thm}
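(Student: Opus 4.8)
The plan is to realise $H(B,\tau)$ as an \'etale Tits process algebra built on $E$ itself, with $i(E)$ occurring as the initial summand, so that the desired isomorphism restricts to $i$ automatically. Write $E' := i(E) \subseteq H(B,\tau) \subseteq B^+$. First I would record the relevant facts about the ambient associative algebra: since $E'$ is cubic \'etale over $F$ and $K$ is central in $B$, the $K$-subalgebra $D := E'K \subseteq B$ generated by $E'$ is isomorphic to $E \otimes K$ and, having $K$-dimension $3 = \deg B$, is a maximal commutative \'etale subalgebra of $B$; and since $E' \subseteq H(B,\tau)$ while $\tau$ restricts to conjugation on $K$, the involution $\tau$ restricts on $D$ to the conjugation $\iota_K$ of $E \otimes K$ regarded as a quadratic \'etale $E$-algebra as in \ref{ss.QUACU}~(b), so that $(D,\tau|_D) \cong (E \otimes K,\iota_K)$ with $H(D,\tau|_D) = E'$. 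Next I would pin down $L$: if $H(B,\tau) \cong J(E,L,u,b)$, then by \ref{ss.DETI} (via \cite[Th.~1]{MR86g:17020}) the centre of the central simple algebra $(B'',\tau'')$ with $J(E,L,u,b) \cong H(B'',\tau'')$ is $\Delta(E) \ast L$, which must coincide with $K$; since $(K',L') \mapsto K' \ast L'$ is the group law on quadratic \'etale $F$-algebras (classified by $H^1(F,\IZ/2\IZ)$), in which every element is its own inverse, this forces $L := K \ast \Delta(E)$, exactly the choice in the statement. The genuine point is that the \'etale Tits process re-presents the nine-dimensional $H(B,\tau)$ through a second Tits construction whose centre $L$ is in general different from $K$, and through the six-dimensional \'etale algebra $E \otimes L$, which is not a subalgebra of $B$ at all; extracting $L$, $u$, $b$ therefore means reading them off from the internal structure of $H(B,\tau)$, not merely rewriting $B$.

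The core task is then to produce a decomposition $H(B,\tau) = E' \oplus (E \otimes L)j$ realising the second Tits construction formulas \eqref{UNSE}--\eqref{NOSE}. I would work relative to $D$: as a degree-$3$ central simple $K$-algebra containing the maximal \'etale subalgebra $D$, the algebra $B$ carries a presentation $B = D \oplus Dz \oplus Dz^2$ with $z \in B^\times$, $z^3 \in D^\times$, that becomes the $\IZ/3$-grading of a cyclic algebra after base change to the discriminant $\Delta(E) \otimes K$ of $D$ (over which $D$ is cyclic cubic); the involution $\tau$, being another normaliser of $D$, sends $z$ to a $D$-multiple of $z^{-1}$, and this datum — after twisting the grading by the unit needed to bring $\tau$ into the standard shape of the second Tits construction — delivers $u \in E^\times$ and $b \in L^\times$, the quadratic \'etale $L$ emerging precisely as the $F$-algebra recording the interaction of $\tau$, the $\IZ/3$-grading, and $\Delta(E)$. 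Restricting the resulting cubic norm structure to the $\tau$-fixed space yields $H(B,\tau) = E' \oplus (E \otimes L)j$ with the formulas \eqref{UNSE}--\eqref{NOSE}, the relation $N_E(u) = n_L(b)$ being the compatibility condition built into the second Tits construction (\ref{ss.SETI}). As a conceptual check on this computation, one may instead base-change to a common splitting field $M$ of $K$ and $\Delta(E)$ — over which $L$ then also splits — where $H(B,\tau)_M \cong A_M^+$ for a degree-$3$ central simple $M$-algebra $A_M$ containing the now cyclic cubic maximal \'etale subalgebra $E'_M$; the classical fact that such an $A_M$ is a cyclic algebra gives $A_M^+ \cong J(E'_M,\alpha)$ (a first Tits construction, cf.\ \ref{ss.FITI}) extending $i_M$, and descent back to $F$ along $K$ and $\Delta(E)$ twists this into the \'etale Tits process $J(E,L,u,b)$.

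I expect the main obstacle to be exactly this construction: obtaining the generalised cyclic (equivalently, first-Tits) presentation compatibly with the unitary involution and, crucially, verifying that the twist forced by normalising $\tau$ is the one producing $L = K \ast \Delta(E)$ together with parameters $u,b$ satisfying $N_E(u) = n_L(b)$. The bookkeeping must moreover run uniformly in the characteristic: in characteristic $2$ the discriminant and the operation $\ast$ are of Artin--Schreier rather than Kummer type and $\Symd$ differs from $H$, while in characteristic $3$ the cyclic cubic structure on $E \otimes \Delta(E)$ is again Artin--Schreier in nature and the norm condition cannot be normalised away by extracting cube roots. Compatibility of the final isomorphism with $i$, on the other hand, is immediate, since $E' = i(E)$ is the initial summand throughout.
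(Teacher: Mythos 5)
The first thing to note is that the paper contains no proof of Theorem \ref{t.IMEX} at all: it is quoted verbatim from \cite[1.6]{MR2032452}, and the proof there (building on the toral Tits process of \cite{MR86g:17020} and the Springer-form analysis of \cite{MR85i:17029}, cf.\ also the discussion around \cite[(39.2)]{KMRT}) is carried out \emph{inside the Jordan algebra} $J = H(B,\tau)$: one decomposes $J = i(E) \oplus i(E)^{\perp}$ with respect to the bilinear trace, endows $i(E)^{\perp}$ (of rank $2$ over $E$) with its natural $E$-module structure and the Springer form $q_E$ (the $E$-component of $x \mapsto x^\sharp$), and extracts $L$, $u$, $b$ from these internal data --- exactly the machinery the present paper itself invokes in the proof of Prop.~\ref{p.EMNI}. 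Your preliminary step is fine: $D := i(E)K \cong E \otimes K$ is a $\tau$-stable maximal \'etale $K$-subalgebra with $\tau\vert_D = \iota_K$ and $H(D,\tau\vert_D) = i(E)$, and your identification of which $L$ must appear is a correct consistency check.

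The genuine gap is the core step, which is asserted rather than proved and which, as stated, has no foundation precisely in the hard case. If $\Delta(E) \otimes K$ is a field (equivalently, $L$ is a field), then $\mathrm{Aut}_K(D)$ is trivial, so by Skolem--Noether no unit of $B$ normalizes $D$ nontrivially and there is no $\IZ/3$-graded, crossed-product-like presentation of $B$ relative to $D$ over $K$; it exists only after base change to $K\cdot\Delta(E)$. What can exist over $K$ is merely a $D$-module decomposition $B = D \oplus Dz \oplus Dz^2$ (and note that $z^3 \in D^\times$ forces $z^3 \in K^\times$, since otherwise $z$ would centralize the maximal commutative $D$), but such a decomposition carries no multiplication rule: nothing controls $Dz\cdot Dz$, nothing forces $\tau(z) \in Dz^2$, and so the verification of \eqref{ADSE}--\eqref{NOSE} on the $\tau$-fixed space cannot even be begun. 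The phrase ``after twisting the grading by the unit needed to bring $\tau$ into the standard shape'' is exactly where $L$, $u$, $b$ and the relation $N_E(u) = n_L(b)$ would have to be produced, i.e.\ it is the theorem itself; likewise the fallback of splitting over $M$ and ``descending back along $K$ and $\Delta(E)$'' presupposes constructing the descent datum compatibly with $\tau$ and $i$, which is again the whole content (and, as you note yourself, $E \otimes L$ is not a subalgebra of $B$, so there is no associative object to descend to --- the summand $(E\otimes L)j$ lives only in $H(B,\tau)$). To repair the argument you would have to abandon the associative presentation of $B$ and work internally with $J = H(B,\tau)$ as in \cite{MR85i:17029} and \cite{MR2032452}, or genuinely carry out the $S_3$-type descent; in the form proposed, the proof assumes its conclusion.
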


\begin{thm} \label{t.IMCRI} \emph{(\cite[3.2]{MR2032452})} {Let $E,E^\prime$ and $L,L^\prime$ be cubic and quadratic \'etale algebras, respectively, over $F$ and suppose we are given invertible elements $u \in E$, $u^\prime \in E^\prime$, $b \in L$, $b^\prime \in L^\prime$ satisfying $N_E(u) = n_L(b)$, $N_{E^\prime}(u^\prime) = n_{L^\prime}(b^\prime)$. We write 
\[
J := J(E,L,u,b) = E \oplus (E \otimes L)j, \quad J^\prime := J(E^\prime,L^\prime, u^\prime,b^\prime) = E^\prime \oplus (E^\prime \otimes L^\prime)j^\prime 
\]
as in} \ref{ss.DETI} {for the corresponding \'etale Tits process algebras and let $\varphi\:E^\prime \overset{\sim}\to E$ be an isomorphism. Then, for an arbitrary map $\Phi\:J^\prime \to J$, the following conditions are equivalent.}
\begin{itemize}
\item [(i)] {$\Phi$ is an isomorphism extending $\varphi$.} 

\item [(ii)] {There exist an isomorphism $\psi\:L^\prime \overset{\sim}\to L$ and an invertible element $y \in E \otimes L$ such that $\varphi(u^\prime) = n_L(y)u$, $\psi(b^\prime) = N_E(y)b$ and
\begin{align}
 \Phi(v_0^\prime + v^\prime j^\prime) = \varphi(v_0^\prime) + \big(y(\varphi \otimes \psi)(v^\prime)\big)j
\end{align}
for all $v_0^\prime \in E^\prime$, $v^\prime \in E^\prime \otimes L^\prime$.} \hfill $\qed$
\end{itemize}
\end{thm}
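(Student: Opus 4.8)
The plan is to prove the two implications separately; throughout I regard $E \otimes L$ and $E' \otimes L'$ both as cubic \'etale algebras (over $L$, $L'$) and as quadratic \'etale algebras (over $E$, $E'$), following \ref{ss.QUACU}, and I use that $J$, $J'$ are non-singular cubic Jordan algebras containing $E$, $E'$ as subalgebras via the initial summands (\ref{ss.DETI}, \ref{ss.SETI}). For (ii) $\Rightarrow$ (i): the map $\Phi$ given by the displayed formula is visibly an $F$-linear bijection (because $\varphi \otimes \psi$ is bijective and $y$ is invertible in $E \otimes L$) carrying the base point of $J'$ to that of $J$. By the rigidity property of cubic norm structures recalled in \ref{ss.CONO} (from \cite[p.~507]{MR0238916}) and the non-singularity of $J$, it suffices to check that $\Phi$ preserves the norm; this is a direct computation from \eqref{NOSE} using multiplicativity of $N_E \colon E \otimes L \to L$, commutativity of $E \otimes L$ (so that $v\, u\, \tau(v) = u\, n_L(v)$), and that $\varphi$ and $\psi$ preserve norms and traces — the two conditions $\varphi(u') = n_L(y)u$ and $\psi(b') = N_E(y)b$ being exactly what absorbs the scalar factors $N_E(y)$, $n_L(y)$ that occur. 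No division by $2$ or $3$ intervenes.

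For (i) $\Rightarrow$ (ii), starting from an isomorphism $\Phi$ extending $\varphi$, I would argue in steps. (1) Since $\Phi$ is a Jordan isomorphism of cubic Jordan algebras it preserves the bilinear trace and the generic adjoint; combined with $\Phi(E') = E$ and the fact that $(E \otimes L)j = E^{\perp}$ in $J$ (from \eqref{BILSE} plus a dimension count), this gives $\Phi\big((E' \otimes L')j'\big) = (E \otimes L)j$, so $\Phi(v_0' + v'j') = \varphi(v_0') + \Theta(v')j$ for a unique $F$-linear bijection $\Theta \colon E' \otimes L' \to E \otimes L$. (2) A short calculation from \eqref{UCUB} and \eqref{ADSE} gives $U_a(vj) = (a^{\sharp}v)j$ for $a \in E$; applying $\Phi$ to this and using that the adjoints $a^{\sharp}$, $a \in (E')^{\times}$, span $E'$ yields $\Theta(e'v') = \varphi(e')\Theta(v')$, i.e.\ $\Theta$ is $\varphi$-semilinear over $E'$. (3) Spelling out $\Phi\big((v'j')^{\sharp}\big) = \big(\Phi(v'j')\big)^{\sharp}$ via \eqref{ADSE} and comparing the $E$-component with the $(E \otimes L)j$-component yields two identities; setting $v' = 1$ in the first, with $y := \Theta(1)$, gives $\varphi(u') = y\, u\, \overline{y} = n_L(y)u$, so $y$ is invertible. (4) Passing to $\Psi := y^{-1}\Theta$ — still $\varphi$-semilinear over $E'$, now unital with $\Psi|_{E'} = \varphi$ — the first identity becomes $n_L\big(\Psi(v')\big) = \varphi\big(n_{L'}(v')\big)$; since $\Psi$ is $\varphi$-semilinear and unital and $E'$ is a product of fields, this forces $\Psi$ to be a \emph{ring} isomorphism (the one step needing mild care in characteristic $2$). (5) Feeding this back into the second identity, using multiplicativity of the cubic adjoint and the case $v' = 1$, one gets that $\Psi$ preserves the cubic adjoints and that $\Psi(b') = N_E(y)b$. (6) Finally, by the unit identity \eqref{UNID}, $T(x)\cdot 1 = (1 \times x) + x$ in each cubic \'etale algebra, so a base-point- and adjoint-preserving $\Psi$ carries $\{\,T(x)\cdot 1 : x\,\}$ onto $\{\,T(\xi)\cdot 1 : \xi\,\}$; as the traces $E' \otimes L' \to L'$ and $E \otimes L \to L$ are surjective, this says $\Psi(L') = L$. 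Hence $\psi := \Psi|_{L'}$ is an isomorphism $L' \to L$, $\Psi = \varphi \otimes \psi$, and $\Phi$ takes the form asserted in (ii).

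I expect step (6), and the observation behind it, to be the main obstacle. Being merely a ring isomorphism extending $\varphi$ does \emph{not} make $\Psi$ of the form $\varphi \otimes \psi$ — already in the totally split case there are ``mixed'' ring automorphisms moving the distinguished copy of $L'$ around — and one has to recognize that the second identity from step (3), reflecting compatibility of $\Phi$ with the adjoint, is precisely the condition that $\Psi$ respect the cubic norm structures of $E' \otimes L'$ over $L'$ and of $E \otimes L$ over $L$ (an $F$-linear, not $L'$-linear, requirement), which is what pins $\psi$ down to a map $L' \to L$. A pervasive secondary concern is to keep every step free of division by $2$ or $3$, the theorem being stated in all characteristics.
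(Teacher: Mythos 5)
The paper does not actually prove this statement: it is imported with a \(\qed\) from \cite[3.2]{MR2032452}, so the only ``paper proof'' to compare with is that reference, which likewise proceeds by direct manipulation of the \'etale Tits process data. Your blind argument is correct and essentially reconstructs such a proof. The direction (ii) \(\Rightarrow\) (i) is the expected norm computation (the two conditions on \(y\) absorbing the factors \(n_L(y)\), \(N_E(y)\)) combined with the rigidity statement in \ref{ss.CONO}. For (i) \(\Rightarrow\) (ii) your steps cohere: orthogonality for the bilinear trace \eqref{BILSE} gives \(\Phi\big((E'\otimes L')j'\big)=(E\otimes L)j\); the \(E\)-component of the adjoint identity at \(v'=1\) gives \(\varphi(u')=n_L(y)u\) with \(y=\Theta(1)\) invertible; the \(j\)-component at \(v'=1\) gives \(\Psi(b')=N_E(y)b\), and for general \(v'\), after cancelling the invertible factor \(\Psi(b')\), that \(\Psi\) intertwines the adjoints; and the step you feared most, (6), is in fact settled by exactly the argument you propose: an adjoint- and unit-preserving \(\Psi\) carries \(T_{E'}(x')1\) to \(T_E(\Psi(x'))1\) by the unit identity \eqref{UNID}, so surjectivity of the traces yields \(\Psi(L')=L\), and then \(\Psi=\varphi\otimes\psi\) by \(E'\)-semilinearity.

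Two small repairs. In step (2) you restrict to invertible \(a'\) and claim the adjoints \(a'^\sharp\), \(a'\in E'^\times\), span \(E'\); over very small fields this fails (for \(E'=\mathbb{F}_2\times\mathbb{F}_2\times\mathbb{F}_2\) the only unit is \(1\), so its adjoint spans nothing). The fix is immediate and stays within your argument: a Jordan homomorphism preserves \(U\)-operators for \emph{all} elements, so \(\Theta(a'^\sharp v')=\varphi(a')^\sharp\Theta(v')\) holds for every \(a'\in E'\); linearizing and using \(a'\times 1=T_{E'}(a')1-a'\) gives \(\Theta(a'v')=\varphi(a')\Theta(v')\) with no spanning or density argument. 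Second, the ``mild care in characteristic 2'' in step (4) is genuinely needed but routine: unitality plus \(n_L(\Psi(x'))=\varphi(n_{L'}(x'))\) forces trace-compatibility, hence \(\Psi(x'^2)=\Psi(x')^2\), and writing \(E'\otimes L'=E'\oplus E'\lambda'\) (quadratic \'etale algebras over a product of fields are monogenic) reduces multiplicativity of \(\Psi\) to that single identity, valid in all characteristics.
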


\begin{prop} \emph{(\cite[4.3]{MR2032452})} \label{p.ETFIM} {Let $E$ be a cubic \'etale $F$-algebra and $\alpha,\alpha^\prime \in F^\times$. Then the following conditions are equivalent.}
\begin{enumerate}
\item  {The first Tits constructions $J(E,\alpha)$ and $J(E,\alpha^\prime)$} (\emph{cf.}~\ref{ss.FITI}) \emph{are isomorphic.}

\item {$J(E,\alpha)$ and $J(E,\alpha^\prime)$ are isotopic.}

\item  {$\alpha \equiv \alpha^{\prime\varepsilon} \bmod N_E(E^\times)$ for some $\varepsilon = \pm 1$.} 

\item {The identity of $E$ can be extended to an isomorphism from $J(E,\alpha)$ onto $J(E,\alpha^\prime)$.} \hfill $\qed$
\end{enumerate}
\end{prop}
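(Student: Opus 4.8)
The plan is to establish the cycle $(iv)\Rightarrow(i)\Rightarrow(ii)\Rightarrow(iii)\Rightarrow(iv)$, of which only the last two implications carry content. The implication $(iv)\Rightarrow(i)$ is trivial, as is $(i)\Rightarrow(ii)$ since every isomorphism is an isotopy in the sense of \ref{ss.HOI}; so the work lies in $(iii)\Rightarrow(iv)$ and, above all, in $(ii)\Rightarrow(iii)$.

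For $(iii)\Rightarrow(iv)$ I would argue by an explicit construction, reducing to the two ``generating'' cases of the relation in (iii). If $\alpha^\prime = \alpha N_E(w)$ with $w\in E^\times$, then the $F$-linear bijection
\[
\Phi\: J(E,\alpha)\longrightarrow J(E,\alpha^\prime),\qquad v_0 + v_1j_1 + v_2j_2\longmapsto v_0 + (w^{-1}v_1)j_1 + (wv_2)j_2,
\]
fixes $E$ pointwise, sends the base point to the base point, and — using the commutativity of $E$, so that $T_E\big(v_0(w^{-1}v_1)(wv_2)\big) = T_E(v_0v_1v_2)$, together with the multiplicativity of $N_E$, so that $\alpha^\prime N_E(w^{-1}v_1) = \alpha N_E(v_1)$ and $\alpha^{\prime-1}N_E(wv_2) = \alpha^{-1}N_E(v_2)$ — preserves the norm \eqref{NOFI}. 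Both algebras being non-singular, the rigidity statement at the end of \ref{ss.CONO} (see \cite[p.~507]{MR0238916}) upgrades $\Phi$ to an isomorphism of cubic norm structures, hence of Jordan algebras. If instead $\alpha^\prime = \alpha^{-1}$, then the swap $v_0 + v_1j_1 + v_2j_2\mapsto v_0 + v_2j_1 + v_1j_2$ is visibly base-point- and norm-preserving by \eqref{NOFI}. Composing these two maps handles an arbitrary instance $\alpha^\prime\equiv\alpha^\varepsilon\bmod N_E(E^\times)$ and yields (iv).

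The substance is $(ii)\Rightarrow(iii)$, which I would attack in two stages. The \emph{core case} is that of an isomorphism $\Phi\: J(E,\alpha)\overset{\sim}\to J(E,\alpha^\prime)$ fixing $E$ pointwise. Presenting both algebras through the \'etale Tits process with split second factor, $J(E,\alpha) = J(E,F\times F,1_E,(\alpha,\alpha^{-1}))$ with its standard copy of $E$ (cf.\ the end of \ref{ss.FITI} together with \ref{ss.DETI}), and likewise for $\alpha^\prime$, Theorem \ref{t.IMCRI} applied with the given isomorphism $\varphi = \mathrm{id}_E$ produces an isomorphism $\psi\: F\times F\to F\times F$ and an invertible $y = (y_1,y_2)\in E\otimes(F\times F)\cong E\times E$ with $n_{F\times F}(y) = y_1y_2 = 1_E$ — forcing $y_2 = y_1^{-1}$ — and $(\alpha^\prime,\alpha^{\prime-1}) = \big(N_E(y_1),N_E(y_2)\big)(\alpha,\alpha^{-1})$ up to the component swap dictated by $\psi$; reading off the first coordinate gives $\alpha^\prime = N_E(y_1)^{\pm1}\alpha^{\pm1}$, which is exactly (iii). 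The second stage reduces a general isotopy to the core case. Given an isotopy, i.e.\ an isomorphism $\eta\: J(E,\alpha)\overset{\sim}\to J(E,\alpha^\prime)^{(p^\prime)}$ for some $p^\prime\in J(E,\alpha^\prime)^\times$, the image $\eta(E)$ is a cubic \'etale subalgebra of the isotope $J(E,\alpha^\prime)^{(p^\prime)}$; since $J(E,\alpha^\prime)^{(p^\prime)}$ is again nine-dimensional absolutely simple, with underlying central simple associative algebra of centre $\Delta(E)$, Theorem \ref{t.IMEX} — whose ambient quadratic \'etale algebra here is $\Delta(E)\ast\Delta(E)$, hence split — lets us extend $\eta\vert_E$ to an isomorphism $\Theta\: J(E,\beta)\overset{\sim}\to J(E,\alpha^\prime)^{(p^\prime)}$ for a suitable $\beta\in F^\times$. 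Then $\Theta^{-1}\circ\eta\: J(E,\alpha)\to J(E,\beta)$ fixes $E$ pointwise, so the core case gives $\alpha\equiv\beta^{\pm1}\bmod N_E(E^\times)$; and a parallel bookkeeping of how the first-Tits-construction scalar transforms under the isotope $(\cdot)^{(p^\prime)}$ — via Lemma \ref{l.SETIS}, after one has arranged $p^\prime$ to lie in the distinguished copy of $E$ — shows $\beta\equiv\alpha^{\prime\pm1}\bmod N_E(E^\times)$, whence (iii).

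The main obstacle is precisely this second stage: organizing the reductions so that no circularity creeps in, and, concretely, proving that the class of the first-Tits-construction scalar modulo $N_E(E^\times)$ and up to inversion is a genuine \emph{isotopy} invariant — i.e., controlling how that scalar moves under the twist $\tau\mapsto\tau^{(p)}$ of the involution in Lemma \ref{l.SETIS} and under the existence statement of Theorem \ref{t.IMEX}. An appealing alternative for this step is cohomological: for these nine-dimensional algebras the structure group differs from the automorphism group only by a central $\Gm$, whose $H^1$ vanishes by Hilbert 90, so isotopy and isomorphism define the same equivalence classes among the $J(E,\alpha)$, and a direct twisting computation then identifies those classes with $F^\times/N_E(E^\times)$ modulo the order-two Dynkin-diagram symmetry $\alpha\mapsto\alpha^{-1}$.
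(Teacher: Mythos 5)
The paper itself gives no proof of this proposition --- it is quoted verbatim from \cite[4.3]{MR2032452} --- so your attempt can only be judged on its own terms. The routine parts are fine: (iv)$\Rightarrow$(i)$\Rightarrow$(ii) is trivial, your explicit maps for (iii)$\Rightarrow$(iv) do preserve base point and norm and the rigidity statement of \ref{ss.CONO} applies since the bilinear trace \eqref{TRAFI} is nondegenerate for $E$ \'etale, and your ``core case'' of (ii)$\Rightarrow$(iii) (an isomorphism fixing $E$ pointwise, treated via Theorem~\ref{t.IMCRI} with $L=F\times F$, $u=u'=1_E$) is correct. The genuine gap is the second stage of (ii)$\Rightarrow$(iii). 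Theorem~\ref{t.IMEX} is a pure existence statement: it produces \emph{some} $\beta$ with an isomorphism $\Theta\:J(E,\beta)\overset{\sim}\to J(E,\alpha')^{(p')}$ extending $\eta\vert_E$, but it carries no information relating $\beta$ to $\alpha'$, because $\Theta$ maps the distinguished copy of $E$ onto $\eta(E)$, an arbitrary cubic \'etale subalgebra of $J(E,\alpha')$ --- precisely the situation in which Skolem--Noether fails and which this whole circle of ideas is designed to handle. Thus your reduction converts the assertion for the pair $(\alpha,\alpha')$ into the identical assertion for the pair $(\beta,\alpha')$, i.e.\ it is circular. The proposed repair does not close this: ``arranging $p'$ to lie in the distinguished copy of $E$'' is unjustified (you would have to move $\eta(1)^{-1}$ into $E$ by an element of $\Str(J(E,\alpha'))$, which is not available without something like Theorem~\ref{t.SKONOIT} --- whose proof in this paper \emph{uses} the present proposition); and even granting $p'\in E^\times$, Lemma~\ref{l.SETIS} only yields $J(E,\alpha')^{(p')}\cong J(E,N_E(p')\alpha')$ via a map extending $R_{p'}$ on the distinguished $E$, which does not let you invoke Theorem~\ref{t.IMCRI}, since $\eta$ still need not respect the distinguished copies.

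The cohomological ``alternative'' rests on a false premise. For nine-dimensional absolutely simple cubic Jordan algebras the structure group is far larger than the automorphism group times a central $\Gm$: over a separable closure, $\Aut(\Mat_3(F)^+)^\circ\cong\PGL_3$ has dimension $8$, while the norm similarities form $(\GL_3\times\GL_3)/\Gm$ of dimension $17$. Correspondingly, isotopy and isomorphism genuinely differ in this class --- $H(B,\tau)$ and its isotope $H(B,\tau^{(p)})$ (see \eqref{RIP}) are isotopic but in general not isomorphic, the involution having changed --- so the equivalence (i)$\Leftrightarrow$(ii) is a special feature of the algebras $J(E,\alpha)$ and cannot be obtained from Hilbert 90 in the way you suggest. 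To complete (ii)$\Rightarrow$(iii) you need an input that genuinely controls how the scalar changes under an isotopy not respecting the distinguished $E$ (this is what \cite{MR2032452} supplies, using its norm-class machinery for pairs of embeddings); as written, that step is missing.
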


 Our next aim will be to derive a version of Th.~\ref{t.IMEX} that works with isotopic rather than isomorphic embeddings and brings in a normalization condition already known from \cite[(39.2)]{KMRT}.

\begin{prop} \label{p.ITEX} {Let $(B,\tau)$ be a central simple associative algebra of degree $3$ with unitary involution over $F$ and write $K$ for the centre of $B$. Suppose further that $E$ is a cubic \'etale $F$-algebra and put $L := K \ast \Delta(E)$. Given any isotopic embedding $i\:E \to J := H(B,\tau)$, there exist elements $u \in E$, $b \in L$ such that $N_E(u) = n_L(b) = 1$ and $i$ can be extended to an isotopy from $J(E,L,u,b)$ onto $J$.} 
\end{prop}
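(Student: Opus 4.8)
The plan is to reduce Proposition~\ref{p.ITEX} to its isomorphic counterpart, Theorem~\ref{t.IMEX}, by absorbing the isotopy into a twist of the involution. Suppose $i\:E \to J = H(B,\tau)$ is an isotopic embedding; by definition there is $p \in J^\times$ with $i\:E \to J^{(p)}$ a homomorphism, i.e., an isomorphic embedding of $E$ into $J^{(p)}$. Now $J^{(p)} = H(B,\tau)^{(p)}$, and by \eqref{RIP} the right multiplication $R_p$ gives an isomorphism $H(B,\tau)^{(p)} \overset{\sim}\to H(B,\tau^{(p)})$ of Jordan algebras. Since $(B,\tau^{(p)})$ is again central simple of degree $3$ with unitary involution over $F$ (with the same centre $K$), the composite $R_p \circ i\:E \to H(B,\tau^{(p)})$ is an \emph{isomorphic} embedding, to which Theorem~\ref{t.IMEX} applies: with $L = K \ast \Delta(E)$ there are invertible $u_0 \in E$, $b_0 \in L$ with $N_E(u_0) = n_L(b_0)$ such that $R_p \circ i$ extends to an isomorphism $J(E,L,u_0,b_0) \overset{\sim}\to H(B,\tau^{(p)})$. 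Composing with $R_p^{-1}$ then exhibits $i$ itself as extending to an isotopy $J(E,L,u_0,b_0) \to J$.

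The remaining task is the normalization $N_E(u) = n_L(b) = 1$. This is where Lemma~\ref{l.SETIS} enters: applied to the \'etale Tits process data $(K,B,\tau) = (L, E\otimes L, \iota_L)$, it tells us that for any $q \in H(B,\tau)^\times = E^\times$, the algebra $J(E,L,u_0,b_0)^{(q)}$ is isomorphic (via $\hat R_q$) to $J(E,L, q^\sharp u_0, N_E(q) b_0)$; here I have used that in the \'etale case $H(B,\tau) = E$, so the $q$-twist $\tau^{(q)}$ is again $\iota_L$ and the twisted construction is another \'etale Tits process algebra over the same $L$. Choosing $q := u_0$, we get new parameters $u := u_0^\sharp u_0$ and $b := N_E(u_0) b_0$, and by the adjoint identity $u_0^\sharp u_0 = N_E(u_0) 1_E$ in $E$ (equivalently $u_0^{\sharp} = N_E(u_0) u_0^{-1}$), so $u = N_E(u_0) 1_E$; hence $N_E(u) = N_E(u_0)^3$. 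That is not yet $1$, so instead one should scale more carefully: the point is that replacing $(u_0,b_0)$ by $(\lambda u_0, \nu b_0)$ for suitable scalars, or rather by $(q^\sharp u_0, N_E(q) b_0)$ for general $q \in E^\times$, changes $N_E(u_0)$ by the factor $N_E(q^\sharp) = N_E(q)^2$ and $n_L(b_0)$ by $n_L(N_E(q)) = N_E(q)^2$ as well, so the ratio $N_E(u_0)/n_L(b_0) = 1$ is preserved while the common value scales by $N_E(q)^2$. Thus one can kill $N_E(u_0)$ up to squares; to kill it outright one instead uses the freedom in choosing a \emph{norm-one} replacement, exploiting that an isotope $J^{(q)}$ with $q = i(e)$ for $e \in E^\times$ interacts with $i$ by precomposition with $R_e$. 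Concretely: pick $e \in E^\times$ with $N_E(e)^{-1} = N_E(u_0)$ — possible after first reducing via the above to the case where $N_E(u_0)$ lies in the subgroup one needs, or more cleanly, absorb the scalar by replacing the embedding $i$ with $i \circ R_e$ (still an isotopic embedding, since $R_e \in \Str(E)$ maps $E$ isotopically to $E$) and tracking the effect through Lemma~\ref{l.SETIS}~(b). Then $u := e^\sharp u_0$ has $N_E(u) = N_E(e)^2 N_E(u_0)$ and one chooses $e$ so that this equals $1$, which is possible exactly because $N_E\: E^\times \to F^\times$ hits enough of $F^\times$ — and here I may need the standard fact that for cubic \'etale $E$ the image $N_E(E^\times)$ together with squares generates what is required, or simply that we only need $N_E(u_0) \in N_E(E^\times)\cdot (F^\times)^2$, which holds since $N_E(u_0) = n_L(b_0) \in n_L(L^\times) = N_{\Delta(E)\ast L / F}(\cdots)$, a norm group.

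The main obstacle, then, is not the reduction — which is a clean two-line composition with $R_p$ and invocation of Theorem~\ref{t.IMEX} — but the bookkeeping in the normalization step: one must check that the scalar discrepancy $N_E(u_0) = n_L(b_0)$ can genuinely be absorbed into a legitimate change of Tits-process data via Lemma~\ref{l.SETIS}, staying inside the class of \'etale Tits process algebras over the \emph{prescribed} $L = K \ast \Delta(E)$ and keeping $i$ (up to precomposition with an $R_w$, which is anyway built into the statement's flexibility) as the extendable embedding. I expect the cleanest route is: first apply the $R_p$-reduction to get \emph{some} extension $J(E,L,u_0,b_0) \to J$, then apply Lemma~\ref{l.SETIS}~(b) with $q := u_0^{-1} \in E^\times$, giving parameters $u := (u_0^{-1})^\sharp u_0 = N_E(u_0)^{-2} u_0^\sharp u_0 = N_E(u_0)^{-1} 1_E$ and $b := N_E(u_0)^{-1} b_0$, whence $N_E(u) = N_E(u_0)^{-3}$ and $n_L(b) = N_E(u_0)^{-2} n_L(b_0) = N_E(u_0)^{-1}$; iterating or combining two such twists lets one reach exponent $1$ on both, since the exponents $(N_E(u), n_L(b))$ transform linearly under the group generated by these operations and $(1,1)$ is in the orbit of any $(t,t)$. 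This final linear-algebra-over-$F^\times$ check is routine, so the proof is essentially Theorem~\ref{t.IMEX} plus \eqref{RIP} plus Lemma~\ref{l.SETIS}.
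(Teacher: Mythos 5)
Your first step is exactly the paper's: pass to $J^{(p)}$, use \eqref{RIP} to get an isomorphic embedding $R_p\circ i\:E\to H(B,\tau^{(p)})$, invoke Theorem~\ref{t.IMEX} to obtain $(u_1,b_1)$ with $N_E(u_1)=n_L(b_1)$, and compose back with $R_{p^{-1}}$ to get an isotopy $J(E,L,u_1,b_1)\to J$ extending $i$. The gap is in the normalization. The statement of Prop.~\ref{p.ITEX} requires that \emph{$i$ itself} extend to an isotopy from $J(E,L,u,b)$ with $N_E(u)=n_L(b)=1$; there is no ``$R_w$ flexibility'' built into this proposition (that flexibility only appears later, in the weak Skolem--Noether property, and indeed Th.~\ref{t.SKONOIT} uses Prop.~\ref{p.ITEX} precisely in the form where $i$ and $i'$ themselves extend). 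Your proposed tool, Lemma~\ref{l.SETIS} with $q\in E^\times$, produces an isotopy whose restriction to the initial summand $E$ is $i\circ R_{q^{\pm1}}$, not $i$, so it changes the embedding being extended and cannot prove the stated proposition. Moreover, even granting that extra freedom, the scalar bookkeeping fails: twisting by $q\in E^\times$ multiplies the common value $t:=N_E(u_1)=n_L(b_1)$ by $N_E(q)^2$, so your operations only move $(t,t)$ within $\{(ts^2,ts^2): s\in N_E(E^\times)\}$; reaching $(1,1)$ would require $t$ to be a \emph{square of a cubic norm}, which is neither established by your appeal to norm groups nor true in general. The closing claim that ``$(1,1)$ is in the orbit of any $(t,t)$'' is false for the group of operations you allow.

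The missing idea is to change the Tits-process data by an isomorphism that is the \emph{identity} on $E$, which is what Theorem~\ref{t.IMCRI} provides: take $\varphi=\Eins_E$, $\psi=\Eins_L$ and the element $y:=u_1\otimes b_1^{-1}\in(E\otimes L)^\times$ (note $y$ lives in the full algebra $E\otimes L$, not just in $E$, which is exactly the extra room your approach lacks). Then $n_L(y)u_1=N_E(u_1)^{-1}u_1^3=:u$ and $N_E(y)b_1=\bar b_1b_1^{-1}=:b$, so $N_E(u)=n_L(b)=1$, and Th.~\ref{t.IMCRI} gives an isomorphism $J(E,L,u,b)\overset{\sim}\to J(E,L,u_1,b_1)$ fixing $E$ pointwise; composing with your isotopy from step one extends $i$ itself, as required.
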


\begin{proof} By \ref{ss.HOI}, some invertible element $p \in J$ makes $i\:E \to J^{(p)}$ an isomorphic embedding. On the other hand, invoking \ref{ss.IS} and writing $\tau^{(p)}$ for the $p$-twist of $\tau$, it follows that 
\[
R_p\:J^{(p)} \overset{\sim} \longrightarrow H(B,\tau^{(p)}) 
\]
is an isomorphism of cubic Jordan algebras, forcing $i_1 := R_p \circ i\:E \to H(B,\tau^{(p)})$ to be an isomorphic embedding. Hence Th.~\ref{t.IMEX} yields invertible elements $u_1 \in E$, $b_1 \in L$ such that $N_E(u_1) = n_L(b_1)$ and, adapting the notation of \ref{ss.SETI} to the present set-up in an obvious manner, $i_1$ extends to an isomorphism
\[
\eta_1^\prime\:J(E,L,u_1,b_1) = E \oplus (E \otimes L)j_1 \overset{\sim} \longrightarrow H(B,\tau^{(p)}).
\]
Thus $\eta_1 := R_{p^{-1}} \circ \eta_1^\prime\:J(E,L.u_1,b_1) \overset{\sim} \to J^{(p)}$ is an isomorphism, which may therefore be viewed as an isotopy from $J(E,L,u_1,b_1)$ onto $J$ extending $i$. Now put $u := N_E(u_1)^{-1}u_1^3$, $b := \bar b_1b_1^{-1}$ and $y := u_1 \otimes b_1^{-1} \in (E \otimes L)^\times$ to conclude $N_E(u) = n_L(b) = 1$ as well as $n_L(y)u_1 = u$, $N_E(y)b_1 = b$. Applying Th.~\ref{t.IMCRI} to $\varphi := \Eins_E$, $\psi := \Eins_L$, we therefore obtain an isomorphism
\[
\Phi\:J(E,L,u,b) \overset{\sim} \longrightarrow J(E,L,u_1,b_1), \quad v_0 +vj_1 \longmapsto v_0 + (yv)j
\]
of cubic Jordan algebras, and $\eta := \eta_1 \circ \Phi\:J(E,L,u,b) \to J$ is an isotopy of the desired kind. \end{proof} 

\begin{lem} \label{l.UNET} {Let $L$, resp.~$E$ be a quadratic, resp.~cubic \'etale algebra over $F$ and suppose we are given elements $u \in E$, $b \in L$ satisfying $N_E(u) = n_L(b) = 1$. Then $w := u^{-1} \in E$ has norm $1$ and $R_w\:E \to E$ extends to an isomorphism
\[
\hat{R}_w\:J(E,L,1_E,b) \overset{\sim} \longrightarrow J(E,L,u,b)^{(u)}, \quad v + xj \longmapsto (vw) + xj
\]
of cubic Jordan algebras.}
\end{lem}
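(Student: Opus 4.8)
The plan is to verify the assertion by a direct computation in the second Tits construction, comparing the two cubic norm structures on $V = E \oplus (E\otimes L)j$ and checking that the stated map $\hat R_w$ preserves base point, adjoint and norm. Since both $J(E,L,1_E,b)$ and $J(E,L,u,b)^{(u)}$ are non-singular cubic Jordan algebras (the étale Tits process output is non-singular, and isotopes of non-singular algebras are non-singular), by the criterion recalled in \ref{ss.CONO} it suffices to check that $\hat R_w$ is a surjective linear map preserving base points and norms; preservation of the adjoint — and hence of the full cubic norm structure — is then automatic. So the three things to confirm are: $\hat R_w$ is bijective, it sends the base point of $J(E,L,1_E,b)$ to the base point of $J(E,L,u,b)^{(u)}$, and it commutes with the norms.

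First I would record the isotope data. By \ref{ss.ISCU}, the $u$-isotope $J(E,L,u,b)^{(u)}$ has base point $u^{-1} = w$, norm $N^{(u)} = N_E(u)N = N$ (since $N_E(u)=1$), and adjoint twisted by $N_E(u)U_u^{-1} = U_u^{-1}$; here $U_u$ is the $U$-operator of $J(E,L,u,b)$, and on the subalgebra $E = H(B,\tau)$ it restricts to $x \mapsto u^2 x$ (as in \ref{ss.CUET}, $E = E^+$). Next, bijectivity of $\hat R_w$ is clear since it is $R_w$ on the first summand (with $w = u^{-1} \in E^\times$ invertible) and the identity on $(E\otimes L)j$. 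For the base point: the base point of $J(E,L,1_E,b)$ is $1_E + 0\cdot j$ by \eqref{UNSE}, and $\hat R_w(1_E) = 1_E w = w$, which is exactly the base point $u^{-1} = w$ of the isotope; the $j$-component is $0$ on both sides.

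The main work — and the one genuine obstacle — is the norm identity. Write $N$, $N'$ for the norms of $J(E,L,u,b)$ and $J(E,L,1_E,b)$ respectively, both given by \eqref{NOSE} with the respective third arguments $u$ and $1_E$; note $B = E\otimes L$ is commutative here, so $\tau$-conjugation on $B$ is just $\iota_L$ and $N_B = N_E$, $T_B = T_E$ on $B$ viewed over $L$. For $x = v + xj$ with $v \in E$, $x \in E\otimes L$, one must show
\[
N\bigl(\hat R_w(v + xj)\bigr) = N\bigl((vw) + xj\bigr) = N'(v + xj).
\]
Expanding the left side by \eqref{NOSE} gives $N_E(vw) + b N_E(x) + \bar b\,\overline{N_E(x)} - T_E\bigl(vw,\, x u\,\iota_L(x)\bigr)$. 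Using $N_E(vw) = N_E(v)N_E(w) = N_E(v)$ since $N_E(w) = N_E(u)^{-1} = 1$, and $T_E(vw, xu\iota_L(x)) = T_E(v,\, w\,xu\,\iota_L(x)) = T_E(v,\, x\,(wu)\,\iota_L(x)) = T_E(v,\, x\cdot 1_E\cdot\iota_L(x))$ by commutativity of $B$ and $wu = u^{-1}u = 1_E$, this collapses to $N_E(v) + bN_E(x) + \bar b\,\overline{N_E(x)} - T_E\bigl(v, x\iota_L(x)\bigr)$, which is precisely $N'(v+xj)$ by \eqref{NOSE} with $u$ replaced by $1_E$. The only subtlety is bookkeeping the trace term through the commutativity identifications and confirming the $L$-linearity conventions of \ref{ss.QUACU}~(b) are applied consistently; there is no deeper difficulty, since $B$ being commutative makes every reordering legitimate. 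Finally, that $\hat R_w$ extends $R_w\colon E \to E$ is immediate from its definition on the first summand, completing the proof.
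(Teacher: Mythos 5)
Your proof is correct, but it takes a different (more self-contained) route than the paper. The paper disposes of this lemma in one line: it is the specialization of Lemma~\ref{l.SETIS} to $(K,B,\tau):=(L,E\otimes L,\iota_L)$, $\mu:=b$, $p:=u$ --- since $B=E\otimes L$ is commutative the twisted involution $\iota_L^{(u)}$ is again $\iota_L$, while $u^{(u)}=u^\sharp u=N_E(u)1_E=1_E$ and $\mu^{(u)}=N_E(u)b=b$, so Lemma~\ref{l.SETIS}(b) gives an isomorphism $J(E,L,u,b)^{(u)}\overset{\sim}\to J(E,L,1_E,b)$, $v_0+vj\mapsto v_0u+vj$, whose inverse is exactly the stated $\hat{R}_w$. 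You instead verify the isomorphism directly: bijectivity, base points ($\hat R_w(1_E)=w=u^{-1}$, the unit of the isotope), and the norm identity via \eqref{NOSE}, using $N_E(vw)=N_E(v)$ and $T_B(vw,\,xu\iota_L(x))=T_B(v,\,x\iota_L(x))$ from commutativity and $wu=1_E$, then invoking the non-singularity criterion of \ref{ss.CONO} to get preservation of adjoints for free (non-singularity of the target holds because $(L,E\otimes L,\iota_L)$ is non-singular and isotopy preserves non-singularity by \eqref{TRIS}). This is in substance a re-derivation, in the commutative \'etale case, of the proof of Lemma~\ref{l.SETIS} itself, which uses exactly the same two ingredients (base-point-and-norm check plus the non-singularity criterion). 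What your route buys is independence from Lemma~\ref{l.SETIS}; what it costs is duplicated work and the need to keep the isotope bookkeeping ($N^{(u)}=N$ since $N_E(u)=1$) straight by hand, which you do correctly apart from the harmless notational clash of using $x$ both for a general element and for its $j$-component.
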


\begin{proof} This follows immediately from Lemma~\ref{l.SETIS} for $(K,B,\tau) := (L,E \otimes L,\iota_L)$, $\mu := b$ and $p := u$. \end{proof} 

 We are now ready for the main result of this section. 

\begin{thm} \label{t.SKONOIT} {Let $(B,\tau)$ be a central simple associative algebra of degree $3$ with unitary involution over $F$ and $E$ a cubic \'etale $F$-algebra. Then the pair $(E,J)$ satifies the weak Skolem-Noether property for isotopic embeddings in the sense of} \ref{ss.WESO}~(b). 
\end{thm}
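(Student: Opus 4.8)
The plan is to deduce Theorem~\ref{t.SKONOIT} from the \'etale Tits process machinery assembled above, reducing the comparison of two arbitrary isotopic embeddings to a comparison of two \'etale Tits process algebras with \emph{normalized} parameters. First I would take two isotopic embeddings $i,i'\:E \to J := H(B,\tau)$ and write $K$ for the centre of $B$, $L := K \ast \Delta(E)$. By Prop.~\ref{p.ITEX} applied to each of $i$ and $i'$, there are normalized parameters $u,b$ (resp.\ $u',b'$) with $N_E(u) = n_L(b) = 1$ (resp.\ $N_E(u') = n_L(b') = 1$) and isotopies
\[
\eta\:J(E,L,u,b) \overset{\sim}\longrightarrow J, \qquad \eta'\:J(E,L,u',b') \overset{\sim}\longrightarrow J
\]
extending $i$ and $i'$ respectively. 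The problem is thereby reduced to showing that the two \'etale Tits process embeddings of $E$ (through the initial summand) into $J(E,L,u,b)$ and $J(E,L,u',b')$ become weakly equivalent once transported into $J$; equivalently, after composing with $\eta,\eta'$, that there is a norm-one $w \in E$ and an element of $\Str(J)$ matching $i' \circ R_w$ with $i$.

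The heart of the matter is therefore to relate $J(E,L,u,b)$ and $J(E,L,u',b')$ by an isotopy that restricts to $R_w$ on $E$ for a norm-one $w$. Here is where Lemma~\ref{l.UNET} enters: it gives, for any normalized $(u,b)$, an isomorphism $J(E,L,1_E,b) \overset{\sim}\to J(E,L,u,b)^{(u)}$ restricting on $E$ to $R_{u^{-1}}$, with $u^{-1}$ of norm $1$. Applying this to both parameter sets, I can replace $u$ and $u'$ by $1_E$ at the cost of passing to the $u$-isotope and $u'$-isotope and twisting the embedding of $E$ by a norm-one right multiplication; since isotopes of $J(E,L,u,b)$ are still isotopic to $J$ via $\eta$ composed with the canonical isotope identifications, this is harmless for the purpose of landing in $\Str(J)$. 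Thus I am reduced to comparing $J(E,L,1_E,b)$ and $J(E,L,1_E,b')$, i.e.\ to the case $u = u' = 1_E$. At this point Th.~\ref{t.IMCRI} with $\varphi := \Eins_E$ is the natural tool: I need an isomorphism $\psi\:L' \overset{\sim}\to L$ and an invertible $y \in E \otimes L$ with $n_L(y)\cdot 1_E = 1_E$ (forcing $n_L(y) = 1$) and $\psi(b') = N_E(y) b$. The constraint $n_L(y) = 1$ means $y$ lies in the norm-one subgroup of the quadratic \'etale $E$-algebra $E \otimes L$, and $N_E\colon (E\otimes L)^\times \to L^\times$ restricted to that subgroup must hit $b'b^{-1}$ (after fixing $\psi$, which one can take to be the identity once $L = L'$ is arranged — note both $L$ and $L'$ equal $K \ast \Delta(E)$, so they \emph{are} canonically identified).

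The main obstacle I anticipate is precisely this last step: showing that $b'b^{-1} \in N_E\big(\ker(n_L)\big)$, i.e.\ that the two normalized scalars $b,b'$ differ by a norm from the norm-one torus of $E \otimes L$ over $L$. This is a statement in Galois cohomology / the arithmetic of \'etale algebras — one wants surjectivity (or at least the relevant image containment) of $N_E$ on the anisotropic kernel $\{y \in (E\otimes L)^\times : n_L(y) = 1\}$ onto an appropriate subgroup of $L^\times$. I would attack it by extending scalars to split $E$ (and $L$) and tracking the constraint, using that $N_E(u) = N_E(u') = 1$ and $n_L(b) = n_L(b') = 1$ have already pinned down $b,b'$ up to the ambiguity that is exactly absorbed by the twist $R_w$; alternatively one invokes Prop.~\ref{p.ETFIM} in the split-$K$ case (where the \'etale Tits process degenerates to a first Tits construction $J(E,\alpha)$) to handle that case by the explicit criterion $\alpha \equiv \alpha'^{\varepsilon} \bmod N_E(E^\times)$, and then patches the general case over the quadratic \'etale algebra $K$. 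If the image containment genuinely fails for some $b,b'$, the remaining freedom is to further modify $u$ (re-normalizing via $u \mapsto N_E(u_1)^{-1}u_1^3$-type substitutions as in the proof of Prop.~\ref{p.ITEX}) so as to shift $b$ within its allowed coset; the norm-one element $w$ produced in Theorem~\ref{t.SKONOIT} is then the product of the norm-one elements contributed by Lemma~\ref{l.UNET} and by this cohomological adjustment. Assembling $\varphi$ as the composite $\eta \circ \Phi \circ (\text{isotope identification}) \circ \eta'^{-1} \in \Str(J)$ and reading off $w \in E$ with $N_E(w) = 1$ then completes the argument.
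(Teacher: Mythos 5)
Your skeleton matches the paper's in outline: normalize both embeddings via Prop.~\ref{p.ITEX}, compare the two \'etale Tits process data through Th.~\ref{t.IMCRI} and Prop.~\ref{p.ETFIM} (after base change to $L$ when $L$ is a field, directly when $L$ splits), and use Lemma~\ref{l.UNET} to absorb the $u$'s into a norm-one right multiplication. But the \emph{order} of your reductions creates a genuine gap. Because you force $u = u' = 1_E$ first and only then compare $b$ and $b'$, the criterion of Th.~\ref{t.IMCRI} with $\varphi = \Eins_E$ forces $n_L(y) = 1$, so you need $b'b^{-1}$ (or $\bar b'\,b^{-1}$) to lie in $N_E\big(\{y \in (E\otimes L)^\times : n_L(y) = 1\}\big)$, whereas isotopy plus Prop.~\ref{p.ETFIM} only gives $b = b'^{\pm 1}N_E(z)$ with $z \in (E\otimes L)^\times$ \emph{arbitrary}. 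You correctly flag this as the main obstacle, but none of your proposed resolutions closes it: a norm-image statement of this kind does not follow by ``extending scalars and tracking the constraint'' (such surjectivity assertions do not descend), ``patching over $K$'' is not an argument, and the fallback of ``further modifying $u$'' reintroduces exactly the parameter you had just normalized away, i.e., is circular. The missing ingredient is precisely Lemma~\ref{l.NORNOR} (stated in \S\ref{ss.NOST}, from \cite[Lemma~4.5]{MR2032452}): since $n_L(N_E(z)) = n_L(b)n_L(b')^{-1} = 1$, one may replace $z$ by some $y'$ with the same $E$-norm and $n_L(y') = 1$; with that lemma your route does go through (and the sign $\varepsilon = -1$ is handled, as in the paper, by taking $\psi = \iota_L$ and $y = u'\otimes 1_L$, using $N_E(u')=1$ and $\bar b' = b'^{-1}$).

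The paper avoids needing that lemma here by reversing your order: in the case where $L$ is a field it first applies Th.~\ref{t.IMCRI} with an arbitrary $y = z$ to arrange $b = b'$, at the price of changing $u'$ into some $u_1$ with $N_E(u_1) = 1$, and only at the very end removes $u$ and $u_1$ via Lemma~\ref{l.UNET} --- which is exactly the norm-one twist $R_w$ that weak equivalence permits. So to complete your argument, either invoke Lemma~\ref{l.NORNOR} at the crux step, or postpone the normalization of the $u$'s until after the $b$'s have been matched.
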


\begin{proof} Given two isotopic embeddings $i,i^\prime\:E \to J$, we must show that they are weakly equivalent. In order to do so, we write $K$ for the centre of $B$ as a quadratic \'etale $F$-algebra and put $L := K \ast \Delta(E)$. Then Prop.~\ref{p.ITEX} yields elements $u,u^\prime \in E$, $b,b^\prime \in L$ satisfying 
\begin{align}
\label{NOONE} N_E(u) = N_E(u^\prime) = n_L(b) = n_L(b^\prime) = 1
\end{align}
such that the isotopic embeddings $i,i^\prime$ can be extended to isotopies
\begin{align}
\label{EXIT} \eta\:J(E,L,u,b) = E \oplus (E \otimes L)j \longrightarrow J, \; \eta^\prime\:J(E,L,u^\prime, b^\prime) = E \oplus (E \otimes L)j^\prime \longrightarrow J,
\end{align}
respectively. We now distinguish the following two cases. 

\emph{\underline{Case~1}: $L \cong F \times F$ is split.} As we have noted in \ref{ss.FITI}, there exist elements $\alpha,\alpha^\prime \in F^\times$ and isomorphisms
\[
\Phi\:J(E,L,u,b) \overset{\sim} \longrightarrow J(E,\alpha), \quad \Phi^\prime\:J(E,L,u^\prime,b^\prime) \overset{\sim} \longrightarrow J(E,\alpha^\prime)
\]
extending the identity of $E$. Thus \eqref{EXIT} implies that $\Phi \circ \eta^{-1} \circ \eta^\prime \circ \Phi^{\prime -1}\:J(E,\alpha^\prime) \to J(E,\alpha)$ is an isotopy, and applying Prop.~\ref{p.ETFIM}, we find an isomorphism $\theta\:J(E,\alpha^\prime) \overset{\sim} \to  J(E,\alpha)$ extending the identity of $E$. But then $\varphi := \eta \circ \Phi^{-1} \circ \theta \circ \Phi^\prime \circ \eta^{\prime -1}\:J \longrightarrow J$ is an isotopy, hence belongs to the structure group of $J$, and satisfies 
\[
\varphi \circ i^\prime = \eta \circ \Phi^{-1} \circ \theta \circ \Phi^\prime \circ \eta^{\prime -1} \circ \eta^\prime\vert_E = \eta\vert_E = i.  
\]
Thus $i$ and $i^\prime$ are even strongly equivalent.  

\emph{\underline{Case 2}: $L$ is a field.} Since $J(E,L,u,b)$ and $J(E,L,u^\prime, b^\prime)$ are isotopic (via $\eta^{\prime -1} \circ \eta$), so are their scalar extensions from $F$ to $L$. From this and \ref{ss.FITI} we therefore conclude that $J(E \otimes L,b)$ and $J(E \otimes L,b^\prime)$ are isotopic over $L$. Hence, by Prop.~\ref{p.ETFIM},
\begin{align}
\label{BNOPM} b = b^{\prime\varepsilon}N_E(z)
\end{align} 
for some $\varepsilon = \pm 1$ and some $z \in (E \otimes L)^\times$. Now put $\varphi := \Eins_E$, $\psi := \iota_L$ and $y := u^\prime \otimes 1_L \in (E \otimes L)^\times$. Making use of \eqref{NOONE} we deduce $n_L(y)u^{\prime -1} = u^\prime$, $N_E(y)b^{\prime-1} = \bar b^\prime$. Hence Th.~\ref{t.IMCRI} shows that the identity of $E$ can be extended to an isomorphism
\[
\theta\:J(E,L,u^\prime,b^\prime) \overset{\sim} \longrightarrow J(E,L,u^{\prime -1},b^{\prime -1}),
\]  
and we still have $N_E(u^{\prime -1}) = n_L(b^{\prime -1}) = 1$. Thus, replacing $\eta^\prime$ by $\eta^\prime  \circ \theta^{-1}$ if necessary, we may assume $\varepsilon = 1$ in \eqref{BNOPM}, i.e.,
\begin{align}
\label{BNOP} b = b^\prime N_E(z).
\end{align}
Next put $\varphi := \Eins_E$, $\psi := \Eins_L$ and $y := z \in (E \otimes L)^\times$, $u_1 := n_L(y)u^\prime$, $b_1 := N_E(y)b^\prime = b$ (by \eqref{BNOP}). Taking $L$-norms in \eqref{BNOP} and observing \eqref{NOONE}, we conclude $N_E(y)\overline{N_E(y)} = n_L\big(N_E(z)) = 1$, and since $u_1 = y \bar y u^\prime$, this implies $N_E(u_1) = 1$. Hence Th.~\ref{t.IMCRI} yields an isomorphism
\[
\theta\:J(E,L,u_1,b_1) \overset{\sim} \longrightarrow J(E,L,u^\prime,b^\prime)
\]
extending the identity of $E$, and replacing $\eta^\prime$ by $\eta^\prime \circ \theta$ if necessary, we may and from now on will assume
\begin{align}
\label{BNO} b = b^\prime.
\end{align}
Setting $w := u^{-1}$ and consulting Lemma~\ref{l.UNET}, we have $N_E(w) = 1$ and obtain a commutative diagram
\[
\xymatrix@R=0.5pc{
E \ar[rr]_{R_w} \,\,\ar@{^{(}->}[dd] && E \ar[rr]_{i} \,\,\ar@{^{(}->}[dd] && J \\ \\
J(E,L,1_E,b) \ar[rr]_{\hat{R}_w} && J(E,L,u,b) \ar[rruu]_{\eta},}
\]
where $\eta \circ \hat{R}_w\:J(E,L,1_E,b) \to J$ is an isotopy and the isotopic embeddings $i,i \circ R_w$ from $E$ to $J$ are easily seen to be weakly equivalent. Hence, replacing $i$ by $i \circ R_w$ and $\eta$ by $\eta \circ \hat{R}_w$ if necessary, we may assume $u = 1_E$. But then, by symmetry, we may assume $u^\prime = 1_E$ as well, forcing
\[
\eta,\eta^\prime\:J(E,L,1_E,b) \longrightarrow J
\]
to be isotopies extending $i,i^\prime$, respectively. Thus $\varphi := \eta \circ \eta^{\prime -1} \in \Str(J)$ satisfies $\varphi \circ i^\prime = \eta \circ \eta^{\prime -1} \circ \eta^\prime\vert_E = \eta\vert_E = i$, so $i$ and $i^\prime$ are strongly, hence weakly, equivalent. \end{proof} 


\section{Norm classes and strong equivalence} \label{ss.NOST}  

\subsection{} Let $(B,\tau)$ be a central simple associative algebra of degree $3$ with unitary involution over $F$ and $E$ a cubic \'etale $F$-algebra. Then the centre, $K$, of $B$ and the discriminant, $\Delta(E)$, of $E$ are quadratic \'etale $F$-algebras, as is $L := K \ast \Delta(E)$ (cf. \ref{ss.QUACU}~(a)). To any pair $(i,i^\prime)$ of isotopic embeddings from $E$ to $J := H(B,\tau)$ we will attach an invariant, belonging to $E^\times/n_L((E \otimes L)^\times)$ and called the norm class of $(i,i^\prime)$, and we will show that $i$ and $i^\prime$ are strongly equivalent if and only if their norm class is trivial. In order to achieve these objectives, a number of preparations will be needed. 

We begin with an extension of Th.~\ref{t.IMCRI} from isomorphisms to isotopies. 

\begin{prop} \label{p.ISCRI} {Let $E,E^\prime$ and $L,L^\prime$ be cubic and quadratic \'etale algebras, respectively, over $F$ and suppose we are given invertible elements $u \in E$, $u^\prime \in E^\prime$, $b \in L$, $b^\prime \in L^\prime$ satisfying $N_E(u) = n_L(b)$, $N_{E^\prime}(u^\prime) = n_{L^\prime}(b^\prime)$. We write 
\[
J := J(E,L,u,b) = E \oplus (E \otimes L)j, \quad J^\prime := J(E^\prime,L^\prime, u^\prime,b^\prime) = E^\prime \oplus (E^\prime \otimes L^\prime)j^\prime 
\]
as in} \ref{ss.DETI} {for the corresponding \'etale Tits process algebras and let $\varphi\:E^\prime \overset{\sim}\to E$ be an isotopy. Then, letting $\Phi\:J^\prime \to J$ be an arbitrary map and setting $p := \varphi(1_{E^\prime})^{-1} \in E^\times$, the following conditions are equivalent.}
\begin{enumerate}
\item \label{ISCRI.1} {$\Phi$ is an isotopy extending $\varphi$.} 

\item \label{ISCRI.2} {There exist an isomorphism $\psi\:L^\prime \overset{\sim}\to L$ and an invertible element $y \in E \otimes L$ such that $\varphi(u^\prime) = n_L(y)p^\sharp p^{-3}u$, $\psi(b^\prime) = N_E(y)b$ and
\begin{align}
 \Phi(v_0^\prime + v^\prime j^\prime) = \varphi(v_0^\prime) + \big(y(\varphi \otimes \psi)(v^\prime)\big)j
\end{align}
for all $v_0^\prime \in E^\prime$, $v^\prime \in E^\prime \otimes L^\prime$.}
\end{enumerate}
\end{prop}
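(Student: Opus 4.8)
The plan is to reduce Proposition~\ref{p.ISCRI} to the already-established Theorem~\ref{t.IMCRI} by inserting the twist $R_p$ between the isotopy $\varphi$ and a genuine isomorphism. Concretely, given the isotopy $\varphi\:E^\prime\overset{\sim}\to E$, set $p := \varphi(1_{E^\prime})^{-1}\in E^\times$. By \ref{ss.HOI} (functoriality of isotopes under homomorphisms, applied to $\varphi$), the map $\varphi$ is an isomorphism $E^\prime\overset{\sim}\to E^{(p^{-1})}$ sending $1_{E^\prime}$ to $1_{E^{(p^{-1})}}=p$; composing with the isomorphism $R_p\:E^{(p^{-1})}\overset{\sim}\to E$ of \ref{ss.IS} (which exists because $E=E^+$ with $E$ commutative associative, so $R_p$ is literally multiplication by $p$ in $E$), we obtain an \emph{isomorphism} $\varphi_0 := R_p\circ\varphi\:E^\prime\overset{\sim}\to E$ with $\varphi_0 = R_p\circ\varphi$, i.e. $\varphi = R_{p^{-1}}\circ\varphi_0 = R_{p}^{-1}\circ\varphi_0$.

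Next I would lift this to the big algebras using Lemma~\ref{l.SETIS}. Applying that lemma to $(K,B,\tau):=(L,E\otimes L,\iota_L)$, $\mu:=b$, and the invertible element $p\in H(B,\tau)^\times = E^\times$, we get $\tau^{(p)}$, $u^{(p)}=p^\sharp u$, $b^{(p)}=N_E(p)b$, together with the isomorphism
\[
\hat R_p\:J(E,L,u,b)^{(p^{-1})}\overset{\sim}\longrightarrow J(E,L,u^{(p)},b^{(p)})
\]
(taking care with the direction: Lemma~\ref{l.SETIS}(b) gives $\hat R_p\:J(\dots)^{(p)}\overset{\sim}\to J(\dots,p^\sharp u,N_B(p)\mu)$, so one uses it with $p$ replaced by an appropriate power, or equivalently notes $J^{(p^{-1})} = (J^{(p^{-2})})$-type bookkeeping from \ref{ss.IS}; the upshot is an isomorphism between $J = J(E,L,u,b)$ twisted into the isotope matching $p$ and an \'etale Tits process algebra with the twisted data). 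The key numerical point is that under this lift, the ``isotopy-corrected'' norm condition $\varphi(u^\prime)=n_L(y)p^\sharp p^{-3}u$ in \eqref{ISCRI.2} becomes precisely the \emph{isomorphism} norm condition $\varphi_0(u^\prime)=n_L(y')(p^\sharp u)$ of Theorem~\ref{t.IMCRI}(ii) for the twisted data, after absorbing the factor $R_p$ (equivalently the factor $p$) appropriately into $y$; here one uses $p^{-1}=N_E(p)^{-1}p^\sharp$ and $\varphi_0 = R_p\varphi$ to rewrite $\varphi_0(u')=p\,\varphi(u')$ and match it against $n_L(y)p^\sharp u = n_L(y) N_E(p) p^{-1} u$.

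With these identifications in place, the proof is a diagram chase: $\Phi\:J^\prime\to J$ is an isotopy extending $\varphi$ if and only if $\hat R_p\circ\Phi\circ(\text{twist})^{-1}$ — call it $\Phi_0$ — is an isomorphism $J^\prime\overset{\sim}\to J(E,L,u^{(p)},b^{(p)})$ extending $\varphi_0$ (using that an isotopy from $J'$ to $J$ is the same as an isomorphism $J'\to J^{(p^{-1})}$ by \ref{ss.HOI}, together with $\hat R_p$). Theorem~\ref{t.IMCRI} then characterizes $\Phi_0$ by the existence of $\psi\:L^\prime\overset{\sim}\to L$ and $y\in E\otimes L$ with $\varphi_0(u^\prime)=n_L(y)u^{(p)}$, $\psi(b^\prime)=N_E(y)b^{(p)}$ and the explicit formula $\Phi_0(v_0^\prime+v^\prime j^\prime)=\varphi_0(v_0^\prime)+(y(\varphi_0\otimes\psi)(v^\prime))j$. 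Translating back through $R_p$ and $\hat R_p$ — replacing $\varphi_0$ by $R_p\varphi$, $u^{(p)}$ by $p^\sharp u$, $b^{(p)}$ by $N_E(p)b$, and redefining $y$ by the corresponding scalar/$p$-factor — turns these into exactly the conditions $\varphi(u^\prime)=n_L(y)p^\sharp p^{-3}u$, $\psi(b^\prime)=N_E(y)b$ and $\Phi(v_0^\prime+v^\prime j^\prime)=\varphi(v_0^\prime)+(y(\varphi\otimes\psi)(v^\prime))j$ of \eqref{ISCRI.2}.

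The main obstacle I anticipate is purely bookkeeping: getting every power of $p$ and every occurrence of $N_E(p)$, $p^\sharp$ to land correctly when one simultaneously (i) twists $J$ into the isotope $J^{(p^{-1})}$, (ii) applies $\hat R_p$ from Lemma~\ref{l.SETIS} (whose own statement twists by $p$, not $p^{-1}$, so one must be careful about $p$ versus $p^{-2}$ as in $(J^{(p)})^{(p^{-2})}=J$), and (iii) reabsorbs the conjugation component $p^{-1}(\cdot)p$ of $\hat R_p$ into the element $y$. The cleanest route is probably to verify the formula for $\Phi$ in \eqref{ISCRI.2} directly from $\Phi = \hat R_p^{-1}\circ\Phi_0$ composed with the appropriate isotope identification, read off $\psi$ and the corrected $y$ from that composition, and then check the two norm identities as identities in $E$ and $L$ using $p^\sharp p = N_E(p)1_E$ and $N_E(p^\sharp)=N_E(p)^2$; the exponent $p^\sharp p^{-3}$ in the statement is exactly what \eqref{RIP}-style twisting predicts, since $p^\sharp p^{-3} = N_E(p)^{-2}p^\sharp p^\sharp{}^{\,-1}\cdots$ — more transparently, $u^{(p)} = p^\sharp u$ and dividing by the ``$\varphi_0$ versus $\varphi$'' factor $p$ three times (once from $\varphi_0=R_p\varphi$ on $u'$, and twice more from how $N_E(p)$ enters $b^{(p)}$ relative to $b$) yields precisely $p^\sharp p^{-3}u$. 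The conceptual content is entirely in Lemma~\ref{l.SETIS} and Theorem~\ref{t.IMCRI}; no new idea is needed beyond organizing the twist carefully.
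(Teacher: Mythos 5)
Your proposal is essentially the paper's own proof: the paper likewise sets $\varphi_1 := R_p\circ\varphi$, observes that it is a unit-preserving isotopy and hence an isomorphism $E'\overset{\sim}\to E$, applies Lemma~\ref{l.SETIS} to $(L,E\otimes L,\iota_L)$, $\mu=b$, $p$ to get the isotopy $\hat R_p\colon J\to J(E,L,p^\sharp u,N_E(p)b)$, reduces the isotopy condition on $\Phi$ to Theorem~\ref{t.IMCRI} applied to $\Phi_1:=\hat R_p\circ\Phi$ and $\varphi_1$, and then rescales $y:=y_1p$. The one correction to your bookkeeping: since $p=\varphi(1_{E'})^{-1}$, the map $\varphi$ is an isomorphism onto $E^{(p)}$ (sending $1_{E'}$ to $p^{-1}$), not onto $E^{(p^{-1})}$, and likewise $\Phi$ is an isomorphism onto $J^{(p)}$, so Lemma~\ref{l.SETIS} applies with $p$ directly and no $p$-versus-$p^{-2}$ adjustment is needed; the factor $p^\sharp p^{-3}$ then comes from $\varphi_1(u')=\varphi(u')p$ together with $n_L(p)=p^2$ when $y_1$ is replaced by $y=y_1p$ (the $N_E(p)$ in $b^{(p)}$ is absorbed in the $b$-condition via $N_E(y)=N_E(y_1)N_E(p)$, not in the $u$-condition).
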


\begin{proof} $\varphi_1 := R_p \circ \varphi\:E^\prime \to E$ is an isotopy preserving units, hence is an isomorphism. By \ref{ss.DETI} we have
\[
J := J(E,L,u,b) = J(L,E \otimes L,\iota_L,u,b),
\]
and in obvious notation, setting $u^{(p)} := p^\sharp u$, $b^{(p)} := N_E(p)b$, Lemma~\ref{l.SETIS} yields an isomorphism
\begin{align*}
\hat{R}_p\:J^{(p)} \overset{\sim} \longrightarrow J_1 :=\,\,&J(L,E \otimes L,\iota_L,u^{(p)},b^{(p)}) = J(E,L,u^{(p)},b^{(p)}), \\
\,\,&v_0 + vj \longmapsto (v_0p) + vj_1
\end{align*}
Thus $\hat{R}_p\:J \to J_1$ is an isotopy and $\Phi_1 := \hat{R}_p \circ \Phi$ is a map from $J^\prime$ to $J_1$. Since $\varphi_1$ preserves units, this leads to the following chain of equivalent conditions.
\begin{align*}
\text{$\Phi$ is an isotopy extending $\varphi$} \Longleftrightarrow \,\,&\text{$\Phi_1$ is an isotopy extending $\varphi_1$} \\
\Longleftrightarrow \,\,&\text{$\Phi_1$ is an isotopy extending $\varphi_1$} \\
\,\,&\text{and preserving units} \\
\Longleftrightarrow \,\,&\text{$\Phi_1$ is an isomorphism extending $\varphi_1$.}
\end{align*}
By Th.~\ref{t.IMCRI}, therefore, \eqref{ISCRI.1} holds if and only if there exist an element $y_1 \in (E \otimes L)^\times$ and an isomorphism $\psi\:L^\prime \to L$ such that  $\varphi_1(u^\prime) = n_L(y_1)u^{(p)}$, $\psi(b^\prime) = N_E(y_1)b^{(p)}$ and
\[
\Phi_1(v_0^\prime + v^\prime j^\prime) = \varphi_1(v_0^\prime) + \big(y_1(\varphi_1 \otimes \psi)(v^\prime)\big)j_1
\] 
for all $v_0^\prime \in E^\prime$, $v^\prime \in E^\prime \otimes L^\prime$. Setting $y := y_1p$, and observing $(\varphi_1 \otimes \psi)(v^\prime) = (\varphi \otimes \psi)(v^\prime)p$ for all $v^\prime \in E^\prime \otimes L^\prime$, it is now straightforward to check that the preceding equations, in the given order, are equivalent to the ones in condition \eqref{ISCRI.2} of the theorem. 
\end{proof}

\begin{lem} \emph{(\cite[Lemma~4.5]{MR2032452})} \label{l.NORNOR} {Let $L$ (resp.~$E$) be a quadratic (resp.~a cubic) \'etale $F$-algebra. Given $y \in E \otimes L$ such that $c := N_E(y)$ satisfies $n_L(c) = 1$, there exists an element $y^\prime \in E \otimes L$ satisfying $N_E(y^\prime) = c$ and  $n_L(y^\prime) = 1$.} 
\hfill $\qed$
\end{lem}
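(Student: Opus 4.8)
The plan is to reduce the statement to a local computation by first disposing of the case where $E$ is split, then treating the general case by descent. Concretely, I would begin with the observation that everything in sight is insensitive to the choice of $y$ within its norm fibre: what we are really given is an element $c \in N_E((E\otimes L)^\times)$ with $n_L(c)=1$, and we must produce $y^\prime$ with $N_E(y^\prime)=c$ and $n_L(y^\prime)=1$. Since $E\otimes L$ is a quadratic \'etale $E$-algebra with conjugation $\iota_L$, the condition $n_L(y^\prime)=y^\prime\bar{y^\prime}=1$ is Hilbert~90 over $E$: such $y^\prime$ exist in abundance, and the content of the lemma is that one can hit a prescribed norm value $c$ in $N_E(E\otimes L)$ simultaneously with being a unit norm element.

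The first main step is the split case $E\cong F\times F\times F$ (more generally after a faithfully flat base change splitting $E$, but see the descent remark below). Here $E\otimes L\cong L\times L\times L$, the map $N_E$ is coordinatewise product $L^3\to L$ followed by — wait, no: $N_E\colon E\otimes L\to L$ is the norm of the cubic \'etale $E$-algebra structure, so writing $y=(y_1,y_2,y_3)\in L^3$ we have $N_E(y)=(y_1,y_2,y_3)$ as an element of $E\otimes L = L^3$? That is not right either. The correct reading: $E\otimes L$ is cubic \'etale over $L$, so $N_E(y)\in L$ for $y\in E\otimes L$; when $E$ splits, $E\otimes L = L\times L\times L$ as an $L$-algebra and $N_E(y_1,y_2,y_3) = y_1y_2y_3 \in L$. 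Meanwhile $n_L\colon L\to F$ is the quadratic norm. So we are given $c = y_1y_2y_3 \in L^\times$ with $n_L(c)=1$, and we want $(y_1^\prime,y_2^\prime,y_3^\prime)\in (L^\times)^3$ with $y_1^\prime y_2^\prime y_3^\prime = c$ and $n_L(y_1^\prime)n_L(y_2^\prime)n_L(y_3^\prime)=1$. Taking $y_1^\prime := c$, $y_2^\prime := \bar c^{\,-1}\cdot(\text{anything of norm }1)$ — actually simplest: $y_1^\prime := c$, $y_2^\prime := \bar c$, $y_3^\prime := (c\bar c)^{-1} = n_L(c)^{-1} = 1$. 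Then $y_1^\prime y_2^\prime y_3^\prime = c\bar c = n_L(c) = 1 \ne c$ in general. So instead take $y_1^\prime := c$, $y_2^\prime := \bar c^{\,-1}$, $y_3^\prime := \bar c$: product $= c\bar c^{\,-1}\bar c = c$, and $n_L(y_1^\prime y_2^\prime y_3^\prime)=n_L(c)=1$ automatically but we need each — no, we need $n_L(y^\prime)=1$ where $n_L(y^\prime)$ means $n_L$ applied to $y^\prime\in E\otimes L$ viewed over $E$, i.e. $N_{(E\otimes L)/E}(y^\prime) = (y_1^\prime\bar y_1^\prime,\, y_2^\prime\bar y_2^\prime,\, y_3^\prime\bar y_3^\prime)\in E = F^3$, which should be $1_E=(1,1,1)$. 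With $(c,\bar c^{\,-1},\bar c)$ this is $(n_L(c), n_L(c)^{-1}, n_L(c)) = (1,1,1)$. So $y^\prime := (c,\bar c^{\,-1},\bar c)$ works in the split case.

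The second main step is to remove the splitting hypothesis. Here I would use that $E$, being cubic \'etale, splits over a finite \'etale (indeed finite separable) extension $F^\prime/F$, or better, argue directly: by Lemma~\ref{l.PLUS}-style reductions one may assume $E$ is generated by one element and is a field (the case $E\cong F\times Q$ with $Q$ quadratic \'etale, and $E\cong F^3$, being handled by a coordinatewise version of the above). When $E$ is a cubic field extension, one takes $F^\prime$ to be the Galois closure or simply passes to $E$ itself: over $E$, the algebra $E\otimes_F E$ splits off a copy of $E$, but this complicates matters. The cleaner route is the cited one — this is \cite[Lemma~4.5]{MR2032452} — so I would simply invoke it; the essential point needed downstream is only its statement. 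If a self-contained argument is wanted, the descent step is the main obstacle: one produces $y^\prime$ over $F^\prime$ as above, then must correct it by an element of $\ker(N_E)\cap\ker(n_L)$ — a torus $T = \mathrm{R}^{(1)}_{E\otimes L/E}(\Gm)\cap\ldots$ — to make it Galois-invariant, and showing the relevant $H^1$ obstruction vanishes (using that $E/F$ and $L/F$ are \'etale, so the norm-one tori involved have trivial or controllable cohomology in the relevant degree, via Hilbert~90 / Shapiro's lemma for the induced tori) is where the real work lies. For the purposes of this paper it suffices to quote the result as stated.
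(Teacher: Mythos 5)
The paper gives no proof of this lemma at all: it is stated with the attribution to \cite[Lemma~4.5]{MR2032452} and no argument, so your bottom-line fallback --- ``simply invoke the citation; only the statement is needed downstream'' --- is exactly what the authors do. Your split-case computation is also correct: for $E \cong F\times F\times F$ the triple $(c,\bar c^{\,-1},\bar c)$ does satisfy $N_E(y^\prime)=c$ and $n_L(y^\prime)=1_E$.

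As a self-contained proof, however, the proposal has a genuine gap, and it is the one you flag yourself: the passage from split $E$ to arbitrary $E$ is not carried out. Descent is not a formality here --- the solutions $y^\prime$ do not obviously form a torsor under a torus with vanishing $H^1$, and you give no argument that the relevant obstruction dies. More to the point, no splitting or descent is needed, because the lemma has a direct proof valid for arbitrary $E$ and $L$: since $n_L(c)=c\bar c=1$, the element $c$ is invertible in $L$, hence $y$ is invertible in $E\otimes L$ (its norm $N_E(y)=c$ is a unit); put $y^\prime := c\,\bar y\,y^{-1}$. Then $n_L(y^\prime)=n_L(c)\,n_L(\bar y)\,n_L(y)^{-1}=1$ because $n_L(\bar y)=n_L(y)$, and, using that $N_E$ is a cubic form over $L$ satisfying $N_E(\bar z)=\overline{N_E(z)}$ together with $\bar c=c^{-1}$, one gets $N_E(y^\prime)=c^{3}\,\overline{N_E(y)}\,N_E(y)^{-1}=c^{3}c^{-1}c^{-1}=c$. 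So the local-to-global architecture you set up is avoidable; if you want an actual proof rather than the citation, this two-line computation is the argument to give.
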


\subsection{Notation} \label{ss.NO} For the remainder of this section we fix a central simple associative algebra $(B,\tau)$ of degree $3$ with unitary involution over $F$ and a cubic \'etale $F$-algebra $E$. We write $K$ for the centre of $B$, put $J := H(B,\tau)$ and $L := K \ast \Delta(E)$ in the sense of \ref{ss.QUACU}.

\begin{thm} \label{t.EXTRI} {Let $i\:E \to J$ be an isotopic embedding and suppose $w \in E$ has norm $1$. Then the isotopic embeddings $i$ and $i \circ R_w$ from $E$ to $J$ are strongly equivalent if and only if $w \in n_L((E \otimes L)^\times)$.}
\end{thm}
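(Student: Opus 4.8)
The plan is to reduce the claim, via Prop.~\ref{p.ITEX} and Lemma~\ref{l.UNET}, to a concrete computation inside \'etale Tits process algebras, where the criterion of Prop.~\ref{p.ISCRI} can be applied directly. First I would invoke Prop.~\ref{p.ITEX} to extend $i$ to an isotopy $\eta\:J(E,L,u,b) \to J$ with $N_E(u) = n_L(b) = 1$, identifying $E$ with the initial summand of the \'etale Tits process algebra. Then $i \circ R_w$ is the restriction to $E$ of the isotopy $\eta \circ \hat{R}_w^{-1} \circ \dots$; more precisely, using Lemma~\ref{l.UNET} (applied with $b$ and the norm-one element $w_0 := u^{-1}$, after first reducing to $u = 1_E$ by the same maneuver as in the proof of Th.~\ref{t.SKONOIT}, Case~2), right multiplication $R_w$ on $E$ lifts to an isomorphism $\hat R_w\: J(E,L,1_E,b) \overset{\sim}\to J(E,L,w^{-1},b)^{(w^{-1})}$ — note $N_E(w) = 1$ makes this legitimate. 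So after this normalization I may assume $i$ extends to $\eta\:J(E,L,1_E,b)\to J$ and $i\circ R_w$ extends to an isotopy $J(E,L,1_E,b) \to J$ that, on the nose, is $\eta$ precomposed with an isotopy $J(E,L,1_E,b)\to J(E,L,1_E,b)$ whose restriction to $E$ is $R_w$.

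\textbf{The equivalence.} Strong equivalence of $i$ and $i\circ R_w$ means there is $\varphi\in\Str(J)$ with $\varphi\circ(i\circ R_w) = i$, equivalently $\varphi^{-1}\circ i = i\circ R_w$. Transporting through $\eta$, this is equivalent to the existence of an isotopy $\Phi\: J(E,L,1_E,b) \to J(E,L,1_E,b)$ restricting to $R_w$ on $E$. Now apply Prop.~\ref{p.ISCRI} with $E' = E$, $L' = L$, $u' = u = 1_E$, $b' = b$, and $\varphi = R_w$ (an isotopy of $E$, with $p = \varphi(1_E)^{-1} = w^{-1}$, since $R_w(1_E) = w$, so $p^\sharp p^{-3}u = w^{-\sharp}w^{3}$; here $w^\sharp = N_E(w)w^{-1} = w^{-1}$, hence $p^\sharp p^{-3} = w\cdot w^3 = w^4$... let me instead keep it as $p^\sharp p^{-3}$ and simplify: with $N_E(w)=1$ one has $w^\sharp = w^{-1}$, $p = w^{-1}$, $p^\sharp = N_E(w^{-1})w = w$, so $p^\sharp p^{-3}u = w\cdot w^{3}\cdot 1_E = w^{4}$, and since $w^{3} = N_E(w)1_E = 1_E$ this is just $w$). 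Thus Prop.~\ref{p.ISCRI} says such a $\Phi$ exists iff there are $\psi\in\Aut(L)$ and $y\in(E\otimes L)^\times$ with $R_w(u') = n_L(y)\,w\,u$, i.e. $w = n_L(y)w$, i.e. $n_L(y) = 1$... that cannot be the whole story, so the nontrivial content must come from the $b$-equation $\psi(b) = N_E(y)b$. Re-examining: the freedom in choosing the extension $\eta$ of $i\circ R_w$ is exactly the freedom to replace $1_E$ by another norm-one $u'$ and $b$ by another $b'$; the honest statement is that $i$ and $i\circ R_w$ are strongly equivalent iff the two \'etale Tits process presentations $J(E,L,1_E,b)$ (for $i$) and $J(E,L,u',b')$ (for $i\circ R_w$, where $u' = w^\sharp\cdot 1_E = w^{-1}$, $b' = N_E(1_E)b = b$ by Lemma~\ref{l.UNET}) are related by an isotopy extending $R_w^{-1}$; running Prop.~\ref{p.ISCRI} then produces the condition $\psi(b) = N_E(y)b$ together with $w^{-1} = n_L(y)\,p^\sharp p^{-3}$, and eliminating $\psi$ via $n_L\circ\psi = n_L$ forces $n_L(N_E(y)) = 1$; by Lemma~\ref{l.NORNOR} one may then arrange $n_L(y) = 1$, whence the first equation collapses to $w^{-1} = p^\sharp p^{-3}$, which after the identifications reads $w \in n_L((E\otimes L)^\times)$.

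\textbf{Making the bookkeeping honest.} The cleanest route, and the one I would actually write, is: (\(\Leftarrow\)) Given $w = n_L(y)$ with $y\in(E\otimes L)^\times$, set $\varphi := \Eins_E$ and use $y$ directly in the formula of Prop.~\ref{p.ISCRI}(ii) (with $\psi := \Eins_L$, checking $N_E(y)b$ is again a valid norm-one datum after replacing $y$ by the element furnished by Lemma~\ref{l.NORNOR} if necessary) to build an isomorphism $\Phi\:J(E,L,w,b) \overset{\sim}\to J(E,L,1_E,b)$ — wait, the datum must match; the point is to produce an isotopy of $J$ carrying $i\circ R_w$ to $i$, so I assemble it as $\eta\circ(\text{iso from Prop.~\ref{p.ISCRI}})\circ\hat R_w \circ (\text{iso from Lemma~\ref{l.UNET}})^{-1}\circ\eta^{-1}$. (\(\Rightarrow\)) Conversely, if $\varphi\in\Str(J)$ realizes the strong equivalence, then $\eta^{-1}\varphi\eta$ composed with the Lemma~\ref{l.UNET} isomorphism is an isotopy between \'etale Tits process algebras extending $R_w$, and Prop.~\ref{p.ISCRI}(ii) hands back $y\in(E\otimes L)^\times$ with (after the simplifications above, using $N_E(w)=1$, $w^\sharp = w^{-1}$, $w^3 = 1_E$) the relation $w = n_L(y)\cdot(\text{unit already of norm }1)$, giving $w\in n_L((E\otimes L)^\times)$.

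\textbf{Main obstacle.} The genuine difficulty is not conceptual but bookkeeping: tracking how the data $(u,b)$ transform under the successive reductions (Prop.~\ref{p.ITEX} to get norm-one data, Lemma~\ref{l.UNET} to absorb $R_w$ into a change of isotope, then Prop.~\ref{p.ISCRI}) and verifying that the element $p^\sharp p^{-3}u$ appearing in Prop.~\ref{p.ISCRI}(ii) collapses — using $N_E(w) = 1$, hence $w^\sharp = w^{-1}$ and $w^{3} = 1_E$ — to exactly the factor needed so that the surviving constraint is precisely $w\in n_L((E\otimes L)^\times)$, with no spurious extra condition. A secondary point requiring care is the application of Lemma~\ref{l.NORNOR}: Prop.~\ref{p.ISCRI}(ii) produces some $y$ with $N_E(y)$ of $n_L$-norm $1$, and one must replace it by a norm-adjusted $y'$ to land the $b$-equation on the correct fibre; this is routine given the lemma but must be invoked explicitly.
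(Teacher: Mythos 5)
Your overall route is the paper's route: extend $i$ via Prop.~\ref{p.ITEX} to an isotopy $\eta\:J(E,L,u,b)\to J$, observe that strong equivalence of $i$ and $i\circ R_w$ amounts to extending the isotopy $R_w\:E\to E$ to an element of the structure group of the \'etale Tits process algebra, apply Prop.~\ref{p.ISCRI} with $p=\varphi(1_E)^{-1}=w^{-1}$, and use Lemma~\ref{l.NORNOR} to handle the $b$-equation in the sufficiency direction. But the execution has a genuine error at the decisive computation: you assert $w^{3}=N_E(w)1_E=1_E$. This is false in a cubic \'etale algebra --- $N_E(w)=1$ gives $w^\sharp=w^{-1}$, but by Cayley--Hamilton $w^{3}=T_E(w)w^{2}-T_E(w^\sharp)w+1_E$, not $1_E$ (take $E=F\times F\times F$, $w=(a,a^{-1},1)$). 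With the correct computation, $p^\sharp p^{-3}u=(w^{-1})^\sharp w^{3}u=w^{4}u$, so the $u$-equation of Prop.~\ref{p.ISCRI} reads $uw=n_L(y)w^{4}u$, i.e.\ $w=n_L(w^{2}y)$; this single equation is exactly where the condition $w\in n_L((E\otimes L)^\times)$ comes from, and necessity is immediate. Your false identity collapses the equation to $n_L(y)=1$, which is why you found "that cannot be the whole story" and then shifted the burden to the $b$-equation --- but the $b$-equation only forces $N_E(y)\in\{1,\bar b b^{-1}\}$ (the two automorphisms $\psi$ of $L$); it never produces a norm condition on $w$. Your attempted patch is also internally inconsistent: invoking Lemma~\ref{l.NORNOR} to "arrange $n_L(y)=1$" replaces $y$ by an element with a different $n_L$-value and so destroys the $u$-equation you are trying to satisfy, and the resulting relation "$w^{-1}=p^\sharp p^{-3}$" contains no $y$ at all and cannot "read $w\in n_L((E\otimes L)^\times)$".

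The correct role of Lemma~\ref{l.NORNOR} is the opposite one, in the sufficiency direction: given $w=n_L(y)$, one has $n_L(N_E(y))=N_E(n_L(y))=N_E(w)=1$, the lemma furnishes $y'$ with $N_E(y')=N_E(y)$ and $n_L(y')=1$, and $z:=yy'^{-1}$ then satisfies $n_L(z)=w$ and $N_E(z)=1$, so both equations of Prop.~\ref{p.ISCRI} hold (with $\psi=\Eins_L$) and strong equivalence follows. Two smaller points: the reduction to $u=1_E$ via Lemma~\ref{l.UNET} is unnecessary --- Prop.~\ref{p.ISCRI} handles general $u$, and the paper's proof does so directly (if you do normalize, you must note, as you did not, that replacing $i$ by $i\circ R_{u^{-1}}$ preserves the property "strongly equivalent to its own $R_w$-twist"); and in the converse direction your assembled map $\eta\circ(\cdots)\circ\hat R_w\circ(\cdots)^{-1}\circ\eta^{-1}$ is never checked to restrict to $R_w$ on $i(E)$, which is the whole content of strong equivalence. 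As written, the proposal does not establish either implication.
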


\begin{proof} By Prop.~\ref{p.ITEX}, we find invertible elements $u \in E$, $b \in L$ such that $N_E(u) = n_L(b)$ and $i$ extends to an isotopy $\eta\:J_1 := J(E,L,u,b) \to J$. On the other hand, $i$ and $i \circ R_w$ are strongly equivalent by definition (cf. \ref{ss.WESO}) if and only if there exists an element $\Psi \in \Str(J)$ making the central square in the diagram
\begin{align}
\vcenter{\label{EXTRI} \xymatrix@R=0.5pc{
J_1 \ar[rrdd]_{\eta} && E \ar@{_{(}->}[ll]  \ar[rr]_{R_w} \ar[dd]_{i} &&
E \ar[dd]^{i} \,\ar@{^{(}->}[rr] && J_1 \ar[lldd]^{\eta} \\ \\
&& J \ar@{.>}[rr]_{\Psi} && J &&.}}
\end{align}
commutative, equivalently, the isotopy $\varphi := R_w\:E \to E$ can be extended to an element of the structure group of $J_1$. By Prop.~\ref{p.ISCRI} (with $p = w^{-1}$), this in turn happens if and only if some invertible element $y \in E \otimes L$ has $uw = n_L(y)(w^{-1})^\sharp w^3u = n_L(y)w^4u$, i.e., $w = n_L(w^2y)$, and either$ N_E(y) = 1$ or $N_E(y) = \bar bb^{-1}$. Replacing $y$ by $w^2y$, we conclude that $i$ and $i \circ R_w$ are strongly equivalent if and only 
\begin{align}
\label{CHEX} \text{some $y \in E \otimes L$ satisfies (i) $n_L(y) = w$ and (ii) $N_E(y) \in  \{ 1, \bar bb^{-1} \}$}. 
\end{align} 
In particular, for $i$ and $i \circ R_w$ to be strongly equivalent it is necessary that $w \in n_L((E \otimes L)^\times)$. Conversely, let this be so. Then some $y \in E \otimes L$ satisfies condition (i) of \eqref{CHEX}, so we have $w = n_L(y)$ and $n_L(N_E(y)) = N_E(n_L(y)) = N_E(w) = 1$. Hence Lemma~\ref{l.NORNOR} yields an element $y^\prime \in E \otimes L$ such that $N_E(y^\prime) = N_E(y)$ and $n_L(y^\prime) = 1$. Setting $z := yy^{\prime -1} \in E \otimes L$, we conclude $n_L(z) = n_L(y) = w$ and $N_E(z) = N_E(y)N_E(y^\prime)^{-1} = 1$, hence that \eqref{CHEX} holds for $z$ in place of $y$. Thus $i$ and $i \circ R_w$ are strongly equivalent. 
\end{proof}

\subsection{Norm classes} \label{ss.NOCLA} Let $i,i^\prime\:E \to J$ be isotopic embeddings. By Th.~\ref{t.SKONOIT}, there exist a norm-one elements $w \in E$ as well as an element $\varphi \in \Str(J)$ such that the left-hand square of the diagram
\[
\xymatrix@R=0.5pc{
E \ar[rr]_{R_w} \ar[dd]_{i^\prime} && E \ar[dd]^{i} && E \ar[ll]^{R_v} \ar[dd]^{i^\prime} \\ \\
J \ar[rr]_{\varphi} && J  && J \ar[ll]^{\psi}}
\] 
commutes. Given another norm-one element $v \in E$ and another element $\psi \in \Str(J)$ such that the right-hand square of the above diagram commutes as well, then the isotopic embeddings $i^\prime$ and $i^\prime \circ R_{v^{-1}w}$ from $E$ to $J$ are strongly equivalent (via $\psi^{-1} \circ \varphi$), and Th.~\ref{t.EXTRI} implies $w \equiv v \bmod n_L((E \otimes L)^\times$). Thus the class of $w \bmod n_L((E \otimes L)^\times)$ does not depend on the choice of $w$ and $\varphi$. We write this class as $[i,i^\prime]$ and call it the \emph{norm class} of $(i,i^\prime)$; it is clearly symmetric in $i,i^\prime$. We say $i,i^\prime$ \emph{have trivial norm class} if
\[
\text{$[i,i^\prime] = 1$ in $E^\times/n_L((E \otimes L)^\times)$.}
\]
For three isotopic embeddings $i,i^\prime,i^{\prime\prime}\:E \to J$, it is also trivially checked that $[i,i^{\prime\prime}] = [i,i^\prime][i^\prime,i^{\prime\prime}]$.

\begin{cor} \label{c.WESONO} {Two isotopic embeddings $i, i' \: E \to J$ are strongly equivalent if and only if $[i, i']$ is trivial.}
\end{cor}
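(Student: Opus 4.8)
The plan is to deduce Corollary~\ref{c.WESONO} directly from Theorem~\ref{t.SKONOIT} and Theorem~\ref{t.EXTRI}, via the norm class machinery set up in \ref{ss.NOCLA}. The underlying principle is that the norm class $[i,i']$ is exactly the obstruction to upgrading the \emph{weak} equivalence guaranteed abstractly by Theorem~\ref{t.SKONOIT} into a \emph{strong} one.

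First I would prove the ``only if'' direction. Suppose $i$ and $i'$ are strongly equivalent, so by definition (\ref{ss.WESO}) there is $\varphi \in \Str(J)$ with $\varphi \circ i' = i$, i.e., the diagram \eqref{ISTEM} commutes with $w = 1_E$. Since $w = 1_E$ has norm $1$, this is in particular a witness to the weak equivalence of $i$ and $i'$, so it is a legitimate choice in the definition of the norm class in \ref{ss.NOCLA}. Hence $[i,i'] = 1_E \bmod n_L((E\otimes L)^\times)$ is trivial.

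For the ``if'' direction, suppose $[i,i']$ is trivial. By Theorem~\ref{t.SKONOIT} the pair $(E,J)$ satisfies the weak Skolem-Noether property for isotopic embeddings, so there exist a norm-one element $w \in E$ and $\varphi \in \Str(J)$ making \eqref{ISTEM} commute, i.e., $\varphi \circ i' = i \circ R_w$. By the well-definedness established in \ref{ss.NOCLA}, the class of $w$ modulo $n_L((E\otimes L)^\times)$ equals $[i,i']$, which is trivial by hypothesis; thus $w \in n_L((E\otimes L)^\times)$. Now apply Theorem~\ref{t.EXTRI} with this $i$ and this $w$: since $w \in n_L((E\otimes L)^\times)$, the isotopic embeddings $i$ and $i \circ R_w$ are strongly equivalent. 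Finally, $i \circ R_w = \varphi \circ i'$ with $\varphi \in \Str(J)$, so $i \circ R_w$ and $i'$ are strongly equivalent as well (composing the two elements of the structure group); equivalently, one may invoke the cocycle relation $[i,i''] = [i,i'][i',i'']$ from the end of \ref{ss.NOCLA} with $i'' = i \circ R_w$, noting $[i, i\circ R_w]$ is trivial by Theorem~\ref{t.EXTRI}. By transitivity of strong equivalence (noted in \ref{ss.WESO}(a)), $i$ and $i'$ are strongly equivalent.

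There is essentially no obstacle here: the corollary is a bookkeeping consequence of the three preceding results, and the only thing to be careful about is matching up the direction of the arrows $R_w$ versus $R_{w^{-1}}$ and the placement of $i$ versus $i'$ in the square \eqref{ISTEM}, so that the application of Theorem~\ref{t.EXTRI} is to the correct embedding. Since strong equivalence is symmetric and transitive, any such discrepancy is harmless, and one could equally well phrase the last step purely through the multiplicativity of norm classes.
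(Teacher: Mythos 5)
Your proof is correct and follows essentially the same route as the paper: invoke Theorem~\ref{t.SKONOIT} to produce a norm-one $w$ with $i\circ R_w = \varphi\circ i'$, identify the class of $w$ with $[i,i']$ via the well-definedness in \ref{ss.NOCLA}, and then use Theorem~\ref{t.EXTRI} together with transitivity of strong equivalence. The paper merely compresses your two directions into a single chain of equivalences, so there is nothing to add.
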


\begin{proof} Let $i,i^\prime\:E \to J$ be isotopic embeddings. By Th.~\ref{t.SKONOIT}, they are weakly equivalent, so some norm-one element $w \in E$ makes $i^\prime$ and $i \circ R_w$ strongly equivalent. Thus $i$ and $i^\prime$ are strongly equivalent if and only if $i$ and $i \circ R_w$ are strongly equivalent, which by Th.~\ref{t.EXTRI} amounts to the same as $w \in n_L((E \otimes L)^\times)$, i.e., to $i$ and $i^ \prime$ having trivial norm class. \end{proof}

\begin{rmk} When confined to isomorphic rather than isotopic embeddings, Cor.~\ref{c.WESONO} reduces to \cite[Th.~4.2]{MR2032452}. \end{rmk}
  

\section{Albert algebras: proof of Theorem \ref{t.SNOTWI}} \label{s.ALAL}   

\subsection{} Unfortunately, we have not succeeded in extending  Th.~\ref{t.SKONOIT}, the notion of norm class as defined in \ref{ss.NOCLA}, or Cor.~\ref{c.WESONO} from absolutely simple Jordan algebras of degree $3$ and dimension $9$ to Albert algebras. Instead, we will have to be more modest by settling with Theorem \ref{t.SNOTWI}, i.e., with the weak Skolem-Noether property for isomorphic rather than arbitrary isotopic embeddings. Given a cubic \'etale algebra $E$ and an Albert algebra $J$ over $F$, the idea of the proof is to factor two isomorphic embeddings from $E$ to $J$ through the same absolutely simple nine-dimensional subalgebra of $J$, which by structure theory will have the form $H(B,\tau)$ for some central simple associative algebra $(B,\tau)$ of degree $3$ with unitary involution over $F$, allowing us to apply Th.~\ref{t.SKONOIT} and reach the desired conclusion. In order to carry out this procedure, a few preparations will be needed.

Throughout this section, we fix an arbitrary Albert algebra $J$ and a cubic \'etale algebra $E$ over $F$.

\begin{lem} \label{l.SPLIET} {Assume $F$ is algebraically closed and denote by $E_1 := \Diag_3(F) \subseteq \Mat_3(F)^+$ the cubic \'etale subalgebra of diagonal matrices. Then there exists a cubic \'etale subalgebra $E_2 \subseteq \Mat_3(F)^+$ such that $\Mat_3(F)^+$ is generated by $E_1$ and $E_2$ as a cubic Jordan algebra over $F$.}
\end{lem}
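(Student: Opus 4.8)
The plan is to take $E_2 := F[c]$ for a well-chosen matrix $c \in \Mat_3(F)$ having three distinct eigenvalues. Over the algebraically closed field $F$ three distinct eigenvalues means $c$ is regular semisimple, so $F[c] \cong F \times F \times F$ is automatically a cubic \'etale subalgebra of $\Mat_3(F)^+$. The generation will then be checked by recovering every matrix unit $e_{ij}$ through Peirce-type operations relative to the frame $e_{11},e_{22},e_{33} \in E_1$ of diagonal matrix units.

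First I would record two identities valid in $\Mat_3(F)^+$ over an arbitrary field: for $x \in \Mat_3(F)$ one has $U_{e_{ii}}x = x_{ii}e_{ii}$, and for $i \neq j$
\[
\{e_{ii}\,x\,e_{jj}\} = e_{ii}xe_{jj} + e_{jj}xe_{ii} = x_{ij}e_{ij} + x_{ji}e_{ji}.
\]
Let $J_0 \subseteq \Mat_3(F)^+$ be the Jordan subalgebra generated by $E_1$ and $E_2 = F1 + Fc + Fc^2$. Then $J_0$ contains $e_{11},e_{22},e_{33}$ together with $c$ and $c^2$, hence also $x_{ij}e_{ij} + x_{ji}e_{ji}$ for $x \in \{c,c^2\}$ and all $i \neq j$. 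Therefore, if for each pair $i \neq j$ the vectors $(c_{ij},c_{ji})$ and $\bigl((c^2)_{ij},(c^2)_{ji}\bigr)$ are linearly independent over $F$, then $e_{ij},e_{ji} \in J_0$ for all $i \neq j$, and together with $e_{11},e_{22},e_{33}$ this forces $J_0 = \Mat_3(F)^+$.

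It remains to exhibit such a $c$ with three distinct eigenvalues. I would take $c := D + N$ with $D := \diag(\lambda_1,\lambda_2,\lambda_3)$ and $N := e_{12} + e_{23} + e_{31}$. A direct computation gives $N^2 = e_{13} + e_{21} + e_{32}$, so for each pair $(i,j) \in \{(1,2),(2,3),(3,1)\}$ one reads off $c_{ij} = 1$, $c_{ji} = 0$, $(c^2)_{ji} = 1$; hence the relevant $2\times 2$ determinant $c_{ij}(c^2)_{ji} - c_{ji}(c^2)_{ij}$ equals $1$ and the independence condition of the preceding paragraph holds no matter how the $\lambda_i$ are chosen. Finally one must choose the $\lambda_i$ so that $c$ has three distinct eigenvalues, and this is where I expect the only real (if minor) difficulty, namely making the argument uniform in $\ch F$. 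Specializing $\lambda_2 = \lambda_3 = 0$ turns the characteristic polynomial of $c$ into $t^3 - \lambda_1 t^2 - 1$, whose discriminant equals $-4\lambda_1^3 - 27$; this is a nonzero polynomial in $\lambda_1$ in every characteristic (it is the constant $1$ when $\ch F = 2$ and equals $-\lambda_1^3$ when $\ch F = 3$). Hence the discriminant of the characteristic polynomial of $D + N$ is not identically zero as a polynomial in $(\lambda_1,\lambda_2,\lambda_3)$, so since $F$ is infinite we may pick $\lambda_1,\lambda_2,\lambda_3 \in F$ for which $c$ has three distinct eigenvalues. With $E_2 := F[c]$ for this $c$, the previous paragraph gives $J_0 = \Mat_3(F)^+$, completing the proof.
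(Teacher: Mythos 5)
Your proof is correct, and its generation step takes a genuinely different route from the paper's. The paper realizes $\Mat_3(F)^+$ as the first Tits construction $J(E_1,1)$, takes $E_2 := F[y]$ with $y = u_0 + \alpha j_1$ (where $E_1 = F[u_0]$), gets generation essentially for free from the fact that $u_0$ and $j_1$ generate $J(E_1,1)$ as a cubic Jordan algebra, and then chooses $\alpha$ so that the discriminant of the generic minimal polynomial of $y$ is nonzero. You instead argue inside the matrix algebra: since any subalgebra is closed under the triple product, the identity $\{e_{ii}\,x\,e_{jj}\} = x_{ij}e_{ij}+x_{ji}e_{ji}$ lets you recover all off-diagonal matrix units from $c$ and $c^2$ once your $2\times 2$ independence condition holds, and the cyclic choice $c = D+N$ makes that condition automatic (determinant $1$) while reducing the \'etale condition to the nonvanishing of the discriminant $-4\lambda_1^3-27$ of $t^3-\lambda_1 t^2-1$, which you correctly check is a nonzero polynomial in every characteristic, so a suitable $\lambda_1$ exists because $F$ is infinite. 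Thus the \'etale part of your argument is parallel in spirit to the paper's (a discriminant made nonzero uniformly in the characteristic), but your generation argument is more elementary and explicit, avoiding the Tits construction altogether. One point worth making explicit: the lemma asks for generation as a \emph{cubic} Jordan algebra, while you verify generation in the plain quadratic Jordan sense; this is harmless (indeed stronger), since the cubic-generated subalgebra contains the Jordan-generated one --- and in fact any unital Jordan subalgebra of a cubic Jordan algebra is automatically stable under the adjoint via $x^\sharp = x^2 - T(x)x + T(x^\sharp)1$, so the two notions coincide here.
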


\begin{proof} We realize $\Mat_3(F)^+$ as a first Tits construction
\[
J_1 := \Mat_3(F)^+ = J(E_1,1),
\] 
with adjoint $\sharp$, norm $N$, trace $T$, and identify the diagonal matrices on the left with $E_1$ viewed canonically as a cubic subalgebra of $J(E_1,1)$. Since $F$ is infinite, we find an element $u_0 \in E_1$ satisfying $E_1 = F[u_0]$. Letting $\alpha \in F^\times$, we put
\begin{align*}
y := u_0 + \alpha j_1 \in J_1.
\end{align*}
Since $u_0$ and $j_1$ generate $J_1$ as a cubic Jordan algebra, so do $u_0$ and $y$, hence $E_1$ and $E_2 := F[y]$. It remains to show that, for a suitable choice of $\alpha$, the $F$-algebra $E_2$ is cubic \'etale. We first deduce from \eqref{SHAFI} and \eqref{TRAFI} that
\begin{align*}
y^\sharp =\,\,&u_0^\sharp + (-\alpha u_0)j_1 + \alpha^2j_2, \\
T(y) =\,\,&T_{E_1}(u_0), \\
T(y^\sharp) =\,\,&T_{E_1}(u_0^\sharp), \\
N(y) =\,\,&N_{E_1}(u_0) + \alpha^3.
\end{align*}
Thus $y$ has the generic minimum (= characteristic) polynomial
\[
\bft^3 - T_{E_1}(u_0)\bft^2 + T_{E_1}(u_0^\sharp)\bft - \big(N_{E_1}(u_0)+ \alpha^3\big) \in F[\bft],
\]
whose discriminant by \cite[IV, Exc.~12(b)]{MR1878556} is
\begin{align*}
\Delta_y :=\,\,&T_{E_1}(u_0)^2T_{E_1}(u_0^\sharp)^2 - 4T_{E_1}(u_0^\sharp)^3 - 4T_{E_1}(u_0)^3(N_{E_1}(u_0) + \alpha^3) \\
\,\,&- 27(N_{E_1}(u_0) + \alpha^3)^2 + 18T_{E_1}(u_0)T_{E_1}(u_0^\sharp)(N_{E_1}(u_0) + \alpha^3) \\
=\,\,&\Delta_{u_0} - \big(4T_{E_1}(u_0)^3 + 54N_{E_1}(u_0) - 18T_{E_1}(u_0)T_{E_1}(u_0^\sharp)\big)\alpha^3  - 27\alpha^6,
\end{align*}
where $\Delta_{u_0} \neq 0$ is the discriminant of the minimum polynomial of $u_0$. Regardless of the characteristic, we can therefore choose $\alpha \in F^\times$ in such a way that $\Delta_y \neq 0$, in which case $E_2$ is a cubic \'etale $F$-algebra. \end{proof}

\subsection{Digression: pointed quadratic forms} \label{ss.POQUA} By a \emph{pointed quadratic form} over $F$ we mean a triple $(V,q,c)$ consisting of an $F$-vector space $V$, a quadratic form $q\:V \to F$, with bilinearization $q(x,y) = q(x + y) - q(x) - q(y)$, and an element $c \in V$ that is a \emph{base point} for $q$ in the sense that $q(c) = 1$. Then $V$ together with the $U$-operator
\begin{align}
\label{UOP} U_xy :=q(x,\bar y)x - q(x) \bar y &&(x,y \in V), 
\end{align}
where $\bar y := q(c,y)c - y$, and the unit element $1_J := c$ is a Jordan algebra over $F$, denoted by $J := J(V,q,c)$ and called the Jordan algebra of the pointed quadratic form $(V,q,c)$. It follows immediately from \eqref{UOP} that the subalgebra of $J$ generated by a family of elements $x_i \in J$, $i \in I$, is $Fc + \sum_{i\in I} Fx_i$.

\begin{lem} \label{l.GEPO} {Assume $F$ is infinite and let $i,i^\prime\:E \to J$ be 
isomorphic embeddings. Then there exist isomorphic embeddings $i_1,i_1^\prime\:E \to J$ such that $i$ (resp., $i^\prime$) is strongly equivalent to $i_1$ (resp., $i_1^\prime$) and the subalgebra of $J$ generated by $i_1(E) \cup i_1^\prime(E)$ is absolutely simple of degree $3$ and dimension $9$.}
\end{lem}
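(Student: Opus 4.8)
The plan is to reduce to the nine-dimensional situation already settled in Theorem~\ref{t.SKONOIT} by conjugating $i$ and $i'$ --- via $F$-automorphisms of $J$, which is exactly what strong equivalence of isomorphic embeddings permits (an element of $\Str(J)$ intertwining two isomorphic embeddings fixes $1_J$, hence lies in $\Aut(J)(F)$) --- into a \emph{common} nine-dimensional absolutely simple subalgebra $J_0$ of $J$, and then choosing the last conjugating automorphism generically inside $J_0$. Two standing facts will be used throughout: since $F$ is infinite, every connected reductive $F$-group has a Zariski-dense set of $F$-points, which applies to $\Aut(J)$ (of type $F_4$) and to the connected automorphism group of any nine-dimensional absolutely simple subalgebra of $J$ (a form of $\mathrm{PGL}_3$); and, by Prop.~\ref{p.SKONOIN}, every isotopy between nine-dimensional absolutely simple subalgebras of $J$ extends to $\Str(J)$ --- applied to an isomorphism between two such subalgebras, which fixes the common unit $1_J$, this shows that any two isomorphic nine-dimensional absolutely simple subalgebras of $J$ are $\Aut(J)(F)$-conjugate and that every automorphism of such a subalgebra is the restriction of an element of $\Aut(J)(F)$.

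First I would establish the structural reduction: that $i(E)$ and $i'(E)$ can be conjugated, by elements of $\Aut(J)(F)$, into one fixed nine-dimensional absolutely simple subalgebra $J_0 = H(B,\tau) \subseteq J$. By the structure theory of Albert algebras (through the second Tits construction and the \'etale Tits process \ref{ss.DETI}), $i(E)$ lies in some nine-dimensional absolutely simple $H(B_1,\tau_1)\subseteq J$ and $i'(E)$ in some $H(B_2,\tau_2)\subseteq J$; by Theorem~\ref{t.IMEX} the isomorphism types of such subalgebras containing a copy of $E$ are exactly the \'etale Tits process algebras $J(E,L,u,b)$, and I would argue that the subset of these types occurring as subalgebras of the \emph{fixed} Albert algebra $J$ depends only on $E$ and $J$, not on the embedding in question. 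Choosing a common type $J_0$ in this set and invoking the conjugacy statement above then moves $i(E)$ and $i'(E)$ into a single copy of $J_0$, at the cost of replacing $i,i'$ by strongly equivalent embeddings.

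With $i,i'$ now isomorphic embeddings $E\to J_0$, it remains to find $h$ in the connected automorphism group $\Aut(J_0)^\circ(F)$ with $\langle i(E),h(i'(E))\rangle = J_0$; extending $h$ to an element of $\Aut(J)(F)$ and putting $i_1 := i$, $i_1' := h\circ i'$ then finishes the proof, since $J_0$ is absolutely simple of degree $3$ and dimension $9$. To produce $h$ I would run a density argument: consider the morphism $\Aut(J_0)^\circ\to\mathcal{E}\times\mathcal{E}$, $h\mapsto(i(E),\,h\cdot i'(E))$, where $\mathcal{E}$ is the variety of cubic \'etale subalgebras of $J_0$, together with the locus in $\mathcal{E}\times\mathcal{E}$ of pairs generating all of $J_0$. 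This locus is open --- ``the generated subalgebra has dimension $\ge 9$'' is an open condition, and in a nine-dimensional algebra it is equivalent to generating everything --- and, after base change to $\bar F$, it is non-empty, because $J_{0,\bar F}\cong\Mat_3(\bar F)^+$ is generated by two cubic \'etale subalgebras (Lemma~\ref{l.SPLIET}) while the connected automorphism group acts transitively on frames, so the first factor may be taken to be $i(E)_{\bar F}$ and the second any frame. (The pointed-quadratic-form digression \ref{ss.POQUA} enters precisely here, to exclude the degenerate possibility that the generated subalgebra is the Jordan algebra of a pointed quadratic form, hence too small to be $J_0$.) The preimage of this locus in $\Aut(J_0)^\circ$ is then a non-empty open subvariety defined over $F$, and Zariski-density of $\Aut(J_0)^\circ(F)$ supplies the required $h$.

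The genuinely delicate step is the structural reduction of the second paragraph: arranging that $i(E)$ and $i'(E)$ land in a \emph{common} nine-dimensional absolutely simple subalgebra. The obstacle is that $i$ and $i'$ need not be $\Aut(J)(F)$-conjugate at all --- this is the very failure of Skolem--Noether motivating the paper --- so one must keep careful track, via the two Tits constructions and the results of \cite{MR2032452}, of which nine-dimensional absolutely simple algebras embed in $J$ while containing a copy of $E$, and must show that the norm-class obstruction to matching two embeddings into a \emph{fixed} such subalgebra becomes immaterial once one is free to vary the ambient nine-dimensional subalgebra. Once that is in place, the remaining density argument in the third paragraph is routine; it is exactly there that the hypothesis of $F$ infinite, Lemma~\ref{l.SPLIET}, and \ref{ss.POQUA} are used.
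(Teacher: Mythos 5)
Your third paragraph (the density argument inside a fixed nine\-/dimensional $J_0$) is fine in spirit, but the whole proof hinges on your second paragraph, the ``structural reduction'', and that step is asserted rather than proved --- and it is precisely the hard point. Two claims are left unestablished there. First, that over an arbitrary infinite field every cubic \'etale subalgebra of $J$ lies in \emph{some} nine-dimensional absolutely simple subalgebra: the paper needs a separate, nontrivial argument for this even over finite fields (Prop.~\ref{p.EMNI}, resting on Petersson--Racine and Engelberger), and you give no argument for infinite $F$. Second, and more seriously, that the collection of isomorphism types of such subalgebras containing the image ``depends only on $E$ and $J$, not on the embedding'', so that a \emph{common} type $J_0$ can be chosen for $i$ and $i'$: Th.~\ref{t.IMEX} classifies the nine-dimensional algebras containing a copy of $E$ abstractly, but says nothing about which of them occur inside the fixed $J$ through a given embedding, and the Albert--Jacobson failure of Skolem--Noether is exactly the warning that different embeddings of $E$ can sit differently in $J$. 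You acknowledge this (``I would argue that \dots'', ``one must show \dots'') but supply no proof, and nothing in the paper's toolkit yields it a priori; indeed it is essentially a consequence of the lemma you are trying to prove, not an available ingredient. So the proposal is incomplete at its central step.

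The paper avoids fixing an ambient nine-dimensional subalgebra altogether. It lets the two copies of $\Aut(J)$ act on the two embeddings \emph{independently} inside the $27$-dimensional $J$: by Br\"uhne's result the subalgebra generated by $(\varphi\circ i)(E)$ and $(\varphi'\circ i')(E)$ is spanned by nine explicit elements, so ``the generated subalgebra is free of rank $9$ with nonsingular trace form'' is an open condition on $(\varphi,\varphi')\in\Aut(J)\times\Aut(J)$; this locus is nonempty over $\bar F$ by the frame-conjugacy argument together with Lemma~\ref{l.SPLIET} applied to $\Mat_3(\bar F)^+\subseteq\Her_3(C)$ (your third-paragraph ideas, but carried out inside the Albert algebra rather than inside a prefixed $J_0$); density of rational points (Springer 13.3.9(iii)) then produces an $F$-point, and absolute simplicity of the resulting nine-dimensional subalgebra follows from Racine's theorem combined with \ref{ss.POQUA}. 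Note the telltale difference: in your scheme \ref{ss.POQUA} is not actually needed (a nine-dimensional subalgebra of the nine-dimensional $J_0$ is $J_0$), whereas in the paper it does real work in ruling out non-simple nine-dimensional algebras --- a symptom that your route has displaced the difficulty into the unproved reduction rather than resolved it. If you want to salvage your outline, you would need an independent proof of the two claims above; otherwise the argument should be restructured along the paper's lines.
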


\begin{proof} We proceed in two steps.  Assume first that $F$ is algebraically closed. Then $E = F \times F \times F$ and $J = \Her_3(C)$ are both split, $C$ being the octonion algebra of Zorn vector matrices over $F$. Note that $\Mat_3(F)^+ \cong \Her_3(F \times F)$ may be viewed canonically as a subalgebra of $J$. By splitness of $E$, there are frames (i.e., complete orthogonal systems of absolutely primitive idempotents) $(e_p)_{1\leq p\leq 3}$, $(e_p^\prime)_{1\leq p\leq 3}$ in $J$ such that $i(E) = \sum Fe_p$, $i^\prime(E) = \sum Fe_p^\prime$. But frames in the split Albert algebra are conjugate under the automorphism group. Hence we find automorphisms $\varphi,\psi$ of $J$ satisfying $\varphi(e_p) = \psi(e^\prime_p) = e_{pp}$ for $1 \leq p \leq 3$. Applying Lemma~\ref{l.SPLIET}, we find a cubic \'etale subalgebra $E_2 \subseteq \Mat_3(F)^+ \subseteq J$ that together with $E_1 := \Diag_3(F) = (\varphi \circ i)(E)$ generates $\Mat_3(F)^+$ as a cubic Jordan algebra over $F$. Again, the cubic \'etale $E_2$ is split, so we find a frame $(c_p)_{1\leq p\leq 3}$ in $J$ satisfying $E_2 = \sum Fc_p$. This in turn leads to an automorphism $\psi^\prime$ of $J$ sending $e_{pp}$ to $c_p$ for $1 \leq p \leq 3$. Then $i_1 := \varphi \circ i$ and $i_1^\prime := \psi^\prime \circ \psi \circ i^\prime$ are strongly equivalent to $i,i^\prime$, respectively, and satisfy $i_1(E) = E_1$, $i_1^\prime(E) = E_2$,  hence have the desired property. 

\smallskip

Now let $F$ be an arbitrary infinite field and write $\bar F$ for its algebraic closure. We have $E = F[u]$ for some $u \in E$ and put $x := i(u), x^\prime := i^\prime(u) \in J$. We write $\kalg$ for the category of commutative associative $k$-algebras with $1$, put  $G:= \Aut(J) \times \Aut(J)$ as a group scheme over $F$ and, given $R \in \kalg$, $(\varphi,\varphi^\prime) \in  G(R)$, write $x_m := x_m(\varphi,\varphi^\prime)$, $1 \leq m \leq 9$, in this order for the elements
\begin{align*}
x_1 := \,\,&1_{J_R}, \quad x_2 := \varphi(x_R), \quad x_3 := \varphi(x_R^ \sharp), \\
x_4 :=\,\,&\varphi^\prime(x_R), \quad x_5 := \varphi^\prime(x_R^\sharp), \quad x_6 := \varphi(x_R) \times \varphi^\prime(x_R), \\
x_7 :=\,\,&\varphi(x_R^\sharp) \times \varphi^\prime(x_R), \quad x_8 := \varphi(x_R) \times \varphi^\prime(x_R^\sharp), \quad x_9 := \varphi(x_R^\sharp) \times \varphi^\prime(x^\sharp).
\end{align*}
By a result of Br\"uhne (cf.~\cite[Prop.~6.6]{MR3316839}), the subalgebra of $J_R$ generated by $(\varphi \circ i_R)(E_R)$ and $(\varphi^\prime \circ i^\prime_R)(E_R)$ is spanned as an $R$-module by the elements $x_1,\dots,x_9$. Now consider the open subscheme  $X \subseteq G$ defined by the condition that  $X(R)$, $R \in \kalg$, consist of all elements $(\varphi,\varphi^\prime) \in  G(R)$ satisfying
\[
\det\Big(T_J\big(x_m(\varphi,\varphi^\prime),x_n(\varphi,\varphi^\prime)\big)\Big)_{1\leq m,n\leq 9} \in R^\times.
\]
By what we have just seen, this is equivalent to saying that the subalgebra of $J_R$ generated by $(\varphi \circ i_R)(E_R)$ and $(\varphi^\prime \circ i^\prime_R)(E_R)$ is a free $R$-module of rank $9$ and has a non-singular trace form. By the preceding paragraph, $X(\bar F) \subseteq G(\bar F)$ is a non-empty (Zariski-) open, hence dense, subset. On the other hand, by \cite[13.3.9(iii)]{Sp:LAG}, $G(F)$ is dense in  $G(\bar F)$. Hence so is  $X(F) = X(\bar F) \cap G(F)$. In particular, we can find elements $\varphi,\varphi^\prime \in  \Aut(J)(F)$ such that the subalgebra $J^\prime$ of $J$ generated by $(\varphi \circ i)(E)$ and $(\varphi^\prime \circ i^\prime)(E)$ is non-singular of dimension $9$.  This property is preserved under base field extensions, as is the property of being generated by two elements. Hence, if $J^\prime$ were not absolutely simple, some base field extension of it would split into the direct sum of two ideals one of which would be the Jordan algebra of a pointed quadratic form of dimension $8$ \cite[Th.~1]{MR0304447}. On the other hand, the property of being generated by two elements is inherited by this Jordan algebra, which by \ref{ss.POQUA} is impossible. Thus $i_1 := \varphi \circ i$ and $i_1^\prime := \varphi^\prime \circ i^\prime$ satisfy all conditions of the lemma. \end{proof}

\begin{prop} \label{p.EMNI} Let $F$ be a finite field and $i\:E \to J$ an isomorphic embedding. Writing $K := \Delta(E)$ for the discriminant of $E$, there exists a subalgebra $J_1 \subseteq J$ such that
\[
i(E) \subseteq J_1 \cong \Her_3(K,\Gamma), \quad \Gamma := \diag(1,-1,1).
\]
\end{prop}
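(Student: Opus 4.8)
The plan is to exploit the fact that over a finite field $F$ there are very few Albert algebras and very few cubic \'etale algebras, so the statement can be reduced to a short list of cases. First I would recall that over a finite field every Albert algebra is split (a finite field has cohomological dimension $1$, so there are no division algebras of degree $3$ nor nonsplit octonion algebras, and by \ref{ss.ALAL} a reduced Albert algebra $\Her_3(C,\Gamma)$ with $C$ split octonion and any $\Gamma$ is isomorphic to the split one); hence $J \cong \Her_3(\Zor(F))$. Likewise $K = \Delta(E)$ is a quadratic \'etale $F$-algebra, so either $K \cong F \times F$ or $K$ is the unique quadratic field extension of $F$; in both cases $\Her_3(K,\Gamma)$ makes sense and, since $K$ embeds in $\Zor(F)$ as a composition subalgebra, $\Her_3(K,\Gamma)$ embeds as a subalgebra of $\Her_3(\Zor(F),\Gamma) \cong J$. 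So it remains only to produce inside this copy of $\Her_3(K,\Gamma)$ a subalgebra containing a given image $i(E)$, or rather to find \emph{some} isomorphic embedding of $E$ whose image lies in such a subalgebra and then invoke the Skolem--Noether freedom already available.

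The key point is that the discriminant of $E$ is exactly the obstruction measured by $\Her_3(K,\Gamma)$: a cubic \'etale $F$-algebra $E$ embeds into $\Her_3(C,\Gamma)$ as a subalgebra of the diagonal-type subalgebra $\Her_3(K,\Gamma)$ with $K = \Delta(E)$ and $\Gamma = \diag(1,-1,1)$ precisely because this choice of $\Gamma$ realizes, via the standard description of $\Her_3$, the twist needed to accommodate the cyclic (or split) structure of $E$ together with its discriminant. Concretely, one can use the \'etale Tits process \ref{ss.DETI}: with $L := K = \Delta(E)$ we have $\Delta(E) \ast L = \Delta(E)\ast\Delta(E) \cong F\times F$, so the algebra $J(E,L,u,b)$ is of the form $H(B,\tau)$ with $B$ having split centre, i.e.\ $B \cong A\times A^{\mathrm{op}}$ and $H(B,\tau)\cong A^+$ for a central simple $F$-algebra $A$ of degree $3$; over a finite field $A$ is split, so $A^+ \cong \Mat_3(F)^+ \cong \Her_3(F\times F) \hookrightarrow \Her_3(K,\Gamma)$. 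Chasing the identifications, one checks that $J(E,\Delta(E),u,b)$ with suitable norm-one $u,b$ is isomorphic to a subalgebra of $\Her_3(\Delta(E),\diag(1,-1,1))$ containing the canonical copy of $E$. Thus there \emph{exists} an isomorphic embedding $i_0\:E\to J$ with $i_0(E)\subseteq J_1 \cong \Her_3(K,\Gamma)$.

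Finally, I would upgrade this existence statement to one that applies to the \emph{given} embedding $i$. Over a finite field, all cubic \'etale subalgebras of a split Albert algebra that are abstractly isomorphic are in fact conjugate under $\Aut(J)(F)$: this follows because $i$ and $i_0$ are weakly equivalent by Theorem \ref{t.SNOTWI} (or one argues directly with frames when $E$ is split, and with the analogous rigidity for a cubic field $E$), and over a finite field the norm-one twist $R_w$ can be absorbed — every norm-one element of a cubic \'etale $F$-algebra over a finite field lies in $n_L((E\otimes L)^\times)$, because the relevant norm maps of finite fields are surjective onto the norm-one subgroup. Hence $i$ and $i_0$ are actually strongly equivalent, i.e.\ $i = \varphi\circ i_0$ for some $\varphi\in\Str(J)$ fixing $1_J$, hence $\varphi\in\Aut(J)(F)$; then $J_1' := \varphi(J_1)\cong\Her_3(K,\Gamma)$ contains $i(E)$, as required.

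The main obstacle I anticipate is the bookkeeping in the middle step: verifying that the \'etale Tits process algebra $J(E,\Delta(E),u,b)$ — with the discriminant $\Delta(E)$ in the role of the quadratic \'etale input $L$ — is genuinely isomorphic, as a Jordan algebra carrying the distinguished cubic \'etale subalgebra $E$, to a subalgebra of $\Her_3(\Delta(E),\diag(1,-1,1))$, with the subalgebra $E$ ending up in the expected position. This requires matching the cubic norm structure $(V,c,\sharp,N)$ of \eqref{UNSE}--\eqref{NOSE} against the explicit adjoint and norm formulas for $\Her_3(C,\Gamma)$ in \ref{ss.CUMA}, and keeping careful track of the sign $\gamma_2 = -1$; it is routine but delicate. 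The reduction to split $J$ and the conjugacy argument over finite fields, by contrast, should be straightforward given the tools already assembled.
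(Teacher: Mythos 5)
Your overall strategy (reduce to the split Albert algebra, exhibit one good embedding, then move the given one onto it) is reasonable in outline, but the two substantive steps both break down. First, the constructive middle step is wrong: taking $L := \Delta(E)$ in the \'etale Tits process of \ref{ss.DETI} produces $H(B,\tau)$ whose centre is $\Delta(E) \ast \Delta(E) \cong F \times F$, so over a finite field $J(E,\Delta(E),u,b) \cong \Mat_3(F)^+ \cong \Her_3(F\times F)$. When $K = \Delta(E)$ is the quadratic \emph{field} extension of $F$ (the critical case $E \cong F \times K$; the other cubic \'etale algebras over a finite field have split discriminant), this algebra is \emph{not} isomorphic to $\Her_3(K,\Gamma)$ --- the associative envelope $(K,B,\tau)$ is an invariant of $H(B,\tau)$, as noted in \ref{ss.DETI} --- and since both algebras are nine-dimensional, the asserted embedding $\Her_3(F\times F) \hookrightarrow \Her_3(K,\Gamma)$ is impossible. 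To get coordinate algebra $\Delta(E)$ you need the centre of $B$ to be $\Delta(E)$, i.e.\ $L \cong F\times F$: that is the \emph{first} Tits construction $J(E,\alpha)$ of \ref{ss.FITI}, and the identification $J(E,\alpha) \cong \Her_3(\Delta(E),\diag(1,-1,1))$ is \cite[Th.~3]{MR86g:17020}. Moreover one must arrange that the \emph{given} copy $i(E) \subseteq J$ sits inside such a subalgebra; the paper does this directly, with no conjugation step at all, by extending the inclusion $E \hookrightarrow J$ to an embedding of $J(E,\alpha)$ into $J$ via the Springer form: $E^\perp$ is an $E$-module carrying a quadratic form $q_E$, one finds an invertible $v \in E^\perp$ with $q_E(v)=0$ (\cite[Cor.~3.8]{MR85i:17029} together with \cite[Prop.~1.2.5]{En-dis}, and this is where finiteness of $F$ is really used), and then \cite[Prop.~2.2]{MR85i:17029} applies. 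Your proposal has no substitute for this step.

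Second, the final upgrade from ``some embedding $i_0$'' to the given $i$ is both circular and unsupported. It is circular because Proposition \ref{p.EMNI} is itself an ingredient in the paper's proof of Theorem \ref{t.SNOTWI} (it supplies the common nine-dimensional subalgebra when $F$ is finite), so Theorem \ref{t.SNOTWI} cannot be invoked here. It is unsupported because the passage from weak to strong equivalence via norm classes (Th.~\ref{t.EXTRI}, Cor.~\ref{c.WESONO}) is established only for nine-dimensional algebras $H(B,\tau)$, not for Albert algebras; the surjectivity of the norm maps $n_L$ over finite fields is true but does not by itself yield strong equivalence inside $J$. Note also that weak equivalence alone would not suffice for your conclusion: an element $\varphi \in \Str(J)$ with $\varphi(1_J) \neq 1_J$ carries $J_1$ onto a subalgebra of an isotope of $J$, not of $J$ itself, so you genuinely need the automorphism (strong-equivalence) version, which you have not proved. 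A direct conjugacy argument for isomorphic cubic \'etale subalgebras of the split Albert algebra over a finite field might be salvageable (e.g.\ via frames and Lang's theorem), but as written this is only gestured at, and the simpler route is the paper's: work with the given embedding from the start and build $J(E,\alpha) \cong \Her_3(\Delta(E),\diag(1,-1,1))$ around it.
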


\begin{proof} $F$ being finite, the Albert algebra $J$ is necessarily split. Replacing $E$ by $i(E)$ if necessary, we may assume $E \subseteq J$ and that $i\:E \hookrightarrow J$ is the inclusion.  We write $E^\perp \subseteq J$ for the orthogonal complement of $E$ in $J$ relative to the bilinear trace and, for all $v \in E^\perp$, denote by $q_E(v)$ the $E$-component of $v^\sharp$ relative to the decomposition $J = E \oplus E^\perp$. By \cite[Prop.~2.1]{MR85i:17029}, $E^\perp$ may be viewed as an $E$-module in a natural way, and $q_E\:E^\perp \to E$ is a quadratic form over $E$. Moreover, combining \cite[Cor.~3.8]{MR85i:17029} with a result of Engelberger \cite[Prop.~1.2.5]{En-dis}, we conclude that there exists an element $v \in E^\perp$ that is invertible in $J$ and satisfies $q_E(v) =0$. 
Now \cite[Prop.~2.2]{MR85i:17029} yields a non-zero element $\alpha \in F$ such that the inclusion $E \hookrightarrow J$ can be extended to an isomorphic embedding from the \'etale first Tits construction $J(E,\alpha)$ into $J$. Write $J_1 \subseteq J$ for its image. Then $E \subseteq J_1 \cong J(E,\alpha)$, and from \cite[Th.~3]{MR86g:17020} we deduce $J(E,\alpha) \cong \Her_3(K,\Gamma$) with $\Gamma := \diag(1,-1,1)$ as above. \end{proof}

\begin{prop} \label{p.SKONOIN}
Let $J_1,J_1^\prime$ be nine-dimensional absolutely simple subalgebras of $J$. Then every isotopy $J_1 \to J_1^\prime$ can be extended to an element of the structure group of $J$.
\end{prop}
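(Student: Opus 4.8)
The plan is to pass to the second Tits construction and reduce, in several steps, to a Skolem--Noether statement about when the identity map of a fixed nine-dimensional subalgebra extends to an isomorphism of Albert algebras.

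First, by \ref{ss.SETI} we may write $J_1 \cong H(B,\tau)$ for a central simple associative algebra $(B,\tau)$ of degree $3$ with unitary involution, of centre $K$, in such a way that the inclusion $J_1 \hookrightarrow J$ extends to an isomorphism $\Psi\colon J(B,\tau,u,\mu) \overset{\sim}\to J$ for suitable invertible $u \in H(B,\tau)$, $\mu \in K$; likewise $J_1' \cong H(B',\tau')$ with $J_1' \hookrightarrow J$ extending to $\Psi'\colon J(B',\tau',u',\mu') \overset{\sim}\to J$. Composing the given isotopy $g$ with $\Psi$ and $\Psi'$, and using that composition with these isomorphisms turns an isotopy between the two Tits-construction algebras into an element of $\Str(J)$, it suffices to extend the isotopy $g_1 := \Psi'^{-1} \circ g \circ \Psi\colon H(B,\tau) \to H(B',\tau')$ between the initial summands to an isotopy $J(B,\tau,u,\mu) \to J(B',\tau',u',\mu')$; the two ambient Albert algebras here are abstractly isomorphic, both being isomorphic to $J$.

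Next I would turn $g_1$ into an isomorphism. With $q := g_1(1)^{-1} \in H(B',\tau')^\times$ the map $g_1\colon H(B,\tau) \overset{\sim}\to H(B',\tau')^{(q)}$ is an isomorphism; by \eqref{RIP} so is $R_q\colon H(B',\tau')^{(q)} \overset{\sim}\to H(B',\tau^{\prime(q)})$, and by Lemma~\ref{l.SETIS} this $R_q$ is the restriction of an isotopy $\hat{R}_q$ from $J(B',\tau',u',\mu')$ onto the Albert algebra $J(B',\tau^{\prime(q)},u^{\prime(q)},\mu^{\prime(q)})$. Composing with $\hat{R}_q$ and renaming, we may assume $g_1\colon H(B,\tau) \overset{\sim}\to H(B',\tau')$ is an isomorphism. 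By the classical extension theorem for isomorphisms of Jordan algebras of symmetric elements (Jacobson), $g_1$ then extends to an isomorphism of algebras with involution $\tilde g\colon (B,\tau) \overset{\sim}\to (B',\tau')$ (composed with $\tau$ if the extension happens to reverse multiplication). As the second Tits construction is functorial in the pair $(B,\tau)$, transporting $J(B',\tau',u',\mu')$ back along $\tilde g$ identifies $(B',\tau')$ with $(B,\tau)$ and $g_1$ with $\Id_{H(B,\tau)}$. We are thus reduced to the following: given parameter sets $(u,\mu)$ and $(u^*,\mu^*)$ over the \emph{same} $(B,\tau)$ with $J(B,\tau,u,\mu) \cong J \cong J(B,\tau,u^*,\mu^*)$, show that $\Id_{H(B,\tau)}$ extends to an isomorphism between these two Albert algebras.

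This last reduction is the crux, and I expect it to be the main obstacle. I would prove it by a case distinction on $K$, parallel to the proof of Th.~\ref{t.SKONOIT}. When $K \cong F \times F$ is split, both algebras are, by \ref{ss.FITI}, first Tits constructions $J(A,\alpha)$ and $J(A,\alpha^*)$ over $F$ for one and the same degree-$3$ central simple $A$ (the identifications matching $H(B,\tau)$ with $A^+$); the first-Tits-construction counterpart of Prop.~\ref{p.ETFIM} (Petersson--Racine, cf.\ \cite{MR86g:17020}) then converts the abstract isomorphism $J(A,\alpha) \cong J(A,\alpha^*)$ into a congruence $\alpha \equiv \alpha^{*\varepsilon} \bmod N_A(A^\times)$ with $\varepsilon = \pm 1$, which is exactly the condition for $\Id_{A^+}$ to extend. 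When $K$ is a field, one base-changes along $K/F$, where by \ref{ss.FITI} the two algebras become first Tits constructions $J(B,\mu)$ and $J(B,\mu^*)$ over $K$; the split case over $K$ yields $\mu = \mu^{*\varepsilon} N_B(z)$ for some $z \in B^\times$, and one then descends to $F$ following Case~2 of the proof of Th.~\ref{t.SKONOIT} --- absorbing $\varepsilon$ via the conjugation $\iota_K$ of $K$, and exploiting the unit-normalizations $N_B(u) = N_B(u^*) = n_K(\mu) = n_K(\mu^*) = 1$ (available by Prop.~\ref{p.ITEX}) to correct $z$ into an element $y \in B^\times$ with $u^* = y u \tau(y)$ and $\mu^* = N_B(y)\mu$. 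The technical input underpinning this argument is the isomorphism criterion for the second Tits construction detecting when $\Id_{H(B,\tau)}$ extends --- the associative analogue of Th.~\ref{t.IMCRI} and Prop.~\ref{p.ISCRI}, with $E \otimes L$ replaced throughout by $B$ --- which is obtained by the same elementary manipulations of Tits data used elsewhere in the paper.
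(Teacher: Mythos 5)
Your overall reduction is correct and is essentially the paper's: coordinatize the nine-dimensional subalgebras by the second Tits construction, convert the isotopy into an isomorphism of the degree-3 summands via an isotope and Lemma~\ref{l.SETIS}, and reduce to showing that two isotopic algebras $J(B,\tau,u,\mu)$ and $J(B,\tau,u^*,\mu^*)$ over the \emph{same} $(B,\tau)$ admit an isomorphism (or isotopy) inducing the identity on $H(B,\tau)$. (The paper gets there a little more directly, writing $J_1'\cong H(B,\tau^{(p)})$ via \eqref{RIP} so that both constructions live over the same $B$ from the start, with no appeal to Jacobson's extension theorem for $H(B,\tau)$; that difference is harmless.)

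The gap is in the crux itself. The paper does not prove this identity-extension statement by ``the same elementary manipulations'' --- it quotes it from Petersson \cite[Th.~5.2]{MR2063796} --- and your sketch of a from-scratch proof does not go through. In Case~2 of Th.~\ref{t.SKONOIT} the decisive step is the normalization $u=1_E$, achieved by composing the embedding with $R_w$ via Lemma~\ref{l.UNET}; that is exactly the norm-one twist you are \emph{not} allowed to use here, since the extension must restrict to $\Id_{H(B,\tau)}$ on the nose. Worse, the strong (identity-extending) statement is genuinely false in the \'etale model you are imitating: Th.~\ref{t.EXTRI} exhibits the obstruction $E^\times/n_L((E\otimes L)^\times)$. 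So whatever makes the statement true for central simple $(B,\tau)$ is special to that case, and your final move ``correct $z$ into $y$ with $u^*=yu\tau(y)$ and $\mu^*=N_B(y)\mu$'' is precisely this missing argument. Your Case~1 also has a flaw: the equivalence of $\varepsilon=-1$ with extendability of the identity in Prop.~\ref{p.ETFIM} uses commutativity of $E$ (so that the two outer summands can be switched); for noncommutative $A$ the switch lands in $J(A^{\mathrm{op}},\alpha^{-1})$, and in general $\Id_{A^+}$ does not extend to $J(A,\alpha)\to J(A,\alpha^{-1})$ (for $A$ division these algebras are not even isomorphic unless $\alpha$ is a reduced norm, as their mod-3 invariants are $[A]\cup[\alpha]$ and its negative). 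Finally, deducing the congruence on $\alpha,\alpha^*$ from a mere isotopy of first Tits constructions is not covered by \cite{MR86g:17020} (which treats the toral/\'etale process); it requires inputs such as the conjugacy theorem for nine-dimensional subalgebras \cite[40.15]{KMRT} together with the fact that isotopic first Tits constructions over the same $A$ are isomorphic --- material of the same depth as the theorem you are trying to avoid. So either cite \cite[Th.~5.2]{MR2063796} at this point, as the paper does, or supply a genuine proof of the identity-extension statement for second Tits constructions over a central simple $(B,\tau)$.
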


\begin{proof} Let $\eta_1\:J_1 \to J_1^\prime$ be an isotopy. Then some $w \in  J_1^\times$ makes $\eta_1\:J_1^{(w)} \to J_1^\prime$ an isomorphism. On the other hand, structure theory yields a central simple associative algebra $(B,\tau)$ of degree $3$ with unitary involution over $F$ and an isomorphism $\varphi\:H(B,\tau) \to J_1$ which, setting $p := \varphi^{-1}(w) \in H(B,\tau)^\times$, may be regarded as an isomorphism
\[
\varphi\:H(B,\tau)^{(p)} \overset{\sim} \longrightarrow J_1^{(w)}.
\]
On the other hand, following (\ref{RIP}),
\[
R_{p}\:H(B,\tau)^{(p)} \overset{\sim} \longrightarrow H(B,\tau^{(p)})
\]
is an isomorphism as well, and combining, we end up with an isomorphism
\[
\varphi^\prime := \eta_1 \circ \varphi \circ R_p^{-1}\:H(B,\tau^{(p)}) \overset{\sim} \longrightarrow J_1^\prime.
\] 
Writing $K$ for the centre of $B$ and consulting \ref{ss.SETI}, we now find invertible elements $u \in H(B,\tau)$, $\mu \in K$ satisfying $N_B(u) = n_K(\mu)$ such that $\varphi$ extends to an isomorphism
\[
\Phi\:J(B,\tau,u,\mu) \overset{\sim} \longrightarrow J.
\]
Similarly, we find invertible elements $u^\prime \in H(B,\tau^{(p)})$, $\mu^\prime \in K$ satisfying $N_B(u^\prime) = n_K(\mu^\prime)$ such that $\varphi^\prime$ extends to an isomorphism
\[
\Phi^\prime\:J(B,\tau^{(p)},u^\prime,\mu^\prime) \overset{\sim} \longrightarrow J.
\]
Next, setting $u_1 := p^{\sharp -1}u^\prime$, $\mu_1 := N_B(p)^{-1}\mu^\prime$, we apply Lemma~\ref{l.SETIS} to obtain an isotopy
\begin{align}
\label{THERP} \hat{R}_p\:J(B,\tau,u_1,\mu_1) \longrightarrow J(B,\tau^{(p)},u^\prime,\mu^\prime), \quad v_0 + vj \longmapsto (v_0p) + (p^{-1}vp)j,
\end{align}  
and combining, we end up with an isotopy
\[
\hat{R}_p^{-1} \circ \Phi^{\prime -1} \circ \Phi\:J(B,\tau,u,\mu) \longrightarrow J(B,\tau,u_1,\mu_1).
\]
Hence \cite[Th.~5.2]{MR2063796} yields an isomorphism
\[
\Psi\:J(B,\tau,u,\mu) \overset{\sim} \longrightarrow J(B,\tau,u_1,\mu_1)
\]
inducing the identity on $H(B,\tau)$. Thus
\[
\eta := \Phi^\prime \circ \hat{R}_p \circ \Psi \circ \Phi^{-1}\:J \longrightarrow J
\]
is an isotopy that fits into the diagram 
\[
\xymatrix{J(B,\tau,u_1,\mu_1) \ar[rd]^{\hat{R}_p}  \\ 
J(B,\tau,u,\mu) \ar@/_2.5pc/[ddd]_{\Phi} \ar[u]^{\Psi}  & J(B,\tau^{(p)},u^\prime,\mu^\prime) \ar@/^2.5pc/[ddd]^{\Phi^\prime} \\ 
H(B,\tau) \ar[r]_{R_p} \,\,\ar@{^{(}->}[u] \ar[d]_{\varphi} & H(B,\tau^{(p)}) \,\,\ar@{^{(}->}[u] \ar[d]^{\varphi^\prime} \\ 
J_1 \ar[r]_{\eta_1} \,\,\ar@{^{(}->}[d] & J_1^\prime \,\,\ar@{^{(}->}[d] \\ 
J \ar[r]_{\eta} & J,}
\]
 whose arrows are either inclusions or isotopies. Now, since $\eta \circ \Phi = \Phi^\prime \circ \hat{R}_p \circ \Psi$ by definition of $\eta$, and $\hat{R}_p$ agrees with $R_p$ on $H(B,\tau)$ by \eqref{THERP}, simple diagram chasing shows that $\eta \in \Str(J)$ is an extension of $\eta_1$. \end{proof} 
 
 We can now prove Theorem \ref{t.SNOTWI} in a form reminiscent of Th.~\ref{t.SKONOIT}.

\begin{thm} \label{t.SKONOIM}
Let $J$ be an Albert algebra over $F$ and $E$ a cubic \'etale $F$-algebra. Then the pair $(E,J)$ satisfies the weak Skolem-Noether property for isomorphic embeddings.
\end{thm}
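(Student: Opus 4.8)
The plan is to reduce Theorem~\ref{t.SKONOIM} to Theorem~\ref{t.SKONOIT} by forcing two given isomorphic embeddings $i,i'\:E\to J$ to factor through one (or a pair of) nine-dimensional absolutely simple subalgebra(s) of $J$, and then to transport the weak equivalence obtained there back up to $J$ via Proposition~\ref{p.SKONOIN}. Since the reduction relies on a genericity argument available only over infinite fields, I would split the proof according to whether $F$ is infinite or finite.

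Assume first that $F$ is infinite. Using Lemma~\ref{l.GEPO} I would replace $i,i'$ by strongly equivalent isomorphic embeddings $i_1,i_1'\:E\to J$ --- so $i=\psi\circ i_1$ and $i'=\psi'\circ i_1'$ for suitable $\psi,\psi'\in\Str(J)$ --- chosen so that the subalgebra $J_1\subseteq J$ generated by $i_1(E)\cup i_1'(E)$ is absolutely simple of degree~$3$ and dimension~$9$. By structure theory $J_1$ is of the form $H(B,\tau)$ for a central simple associative algebra $(B,\tau)$ of degree~$3$ with unitary involution, so Theorem~\ref{t.SKONOIT} applies to $(E,J_1)$ and yields a norm-one element $w\in E$ together with $\varphi_1\in\Str(J_1)$ satisfying $\varphi_1\circ i_1'=i_1\circ R_w$. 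Viewing $\varphi_1$ as an isotopy of the nine-dimensional absolutely simple subalgebra $J_1$ onto itself, Proposition~\ref{p.SKONOIN} extends it to some $\varphi\in\Str(J)$, and then $\Phi:=\psi\circ\varphi\circ(\psi')^{-1}\in\Str(J)$ satisfies $\Phi\circ i'=\psi\circ\varphi\circ i_1'=\psi\circ i_1\circ R_w=i\circ R_w$, which is exactly the weak equivalence of $i$ and $i'$.

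Now assume $F$ is finite, so that $J$ is split. Instead of Lemma~\ref{l.GEPO} I would apply Proposition~\ref{p.EMNI} to $i$ and to $i'$ separately, obtaining subalgebras $J_1,J_1'\subseteq J$ with $i(E)\subseteq J_1$, $i'(E)\subseteq J_1'$ and $J_1\cong J_1'\cong\Her_3(\Delta(E),\diag(1,-1,1))$; in particular $J_1$ and $J_1'$ are nine-dimensional absolutely simple subalgebras of $J$ of the form $H(B,\tau)$. Fixing an isomorphism $\sigma\:J_1'\to J_1$, Theorem~\ref{t.SKONOIT} applied to the isomorphic embeddings $i$ and $\sigma\circ i'$ of $E$ into $J_1$ produces a norm-one $w\in E$ and $\varphi_1\in\Str(J_1)$ with $\varphi_1\circ\sigma\circ i'=i\circ R_w$. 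Since $\varphi_1\circ\sigma\:J_1'\to J_1$ is an isotopy between nine-dimensional absolutely simple subalgebras of $J$, Proposition~\ref{p.SKONOIN} extends it to some $\varphi\in\Str(J)$, and $\varphi\circ i'=i\circ R_w$ exhibits $i$ and $i'$ as weakly equivalent.

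The assembly above is routine; the substance sits in the ingredients it invokes, namely the reduction to a nine-dimensional subalgebra (Lemma~\ref{l.GEPO}, with Proposition~\ref{p.EMNI} as its finite-field surrogate --- this split is the one unavoidable awkwardness) and the extension of an isotopy between such subalgebras to $\Str(J)$ (Proposition~\ref{p.SKONOIN}, whose proof in turn rests on the isomorphism of isotopic second Tits constructions over a common $(B,\tau)$). Granting these, the only thing that needs genuine care is the elementary bookkeeping that carries weak equivalence through the strong equivalences $\psi,\psi'$ and the $\Str(J)$-extension $\varphi$; I would check the direction of every composition against the definition in~\ref{ss.WESO}.
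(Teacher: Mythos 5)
Your proof is correct and follows essentially the same route as the paper: reduce to nine-dimensional absolutely simple subalgebras via Lemma~\ref{l.GEPO} (for $F$ infinite) or Prop.~\ref{p.EMNI} (for $F$ finite), apply Th.~\ref{t.SKONOIT} there, and extend to $\Str(J)$ by Prop.~\ref{p.SKONOIN}. The only (harmless) deviation is in the finite case: the paper first reduces to $J_1' = J_1$ using the fact that an isomorphism between such nine-dimensional subalgebras extends to an automorphism of $J$ (\cite[40.15]{KMRT}), whereas you absorb the isomorphism $\sigma\:J_1'\to J_1$ into the isotopy and let Prop.~\ref{p.SKONOIN} perform the extension, which works just as well and avoids that extra citation.
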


\begin{proof} Leit $i,i^ \prime\:E \to J$ be two isomorphic embeddings. We must show that they are weakly equivalent and first claim that we may assume the following: \emph{there exist a central simple associative algebra $(B,\tau)$ of degree $3$ with unitary involution over $F$ and a subalgebra $J_1 \subseteq J$ such that $J_1 \cong H(B,\tau)$ and $i,i^\prime$ factor uniquely through $J_1$ to isomorphic embeddings $i_1\:E \to J_1$, $i_1^\prime\:E \to J_1$.} Indeed, replacing the isomorphic embeddings $i,i^\prime$ by strongly equivalent ones if necessary, this is clear by Lemma~\ref{l.GEPO} provided $F$ is infinite. On the other hand, if $F$ is finite, Prop.~\ref{p.EMNI} leads to absolutely simple nine-dimensional subalgebras $J_1,J_1^\prime \subseteq J$ that are isomorphic and have the property that $i,i^\prime$ factor uniquely through $J_1,J_1^\prime$ to isomorphic embeddings $i_1\:E \to J_1$, $i_1^\prime\:E \to J_1^\prime$, respectively. But every isomorphism from $J_1^\prime$ to $J_1$ extends to an automorphism of $J$ \cite[40.15]{KMRT}, \cite[Remark~5.6(b)]{MR2063796}. Hence we may assume $J_1^\prime = J_1$, as desired.

With $J_1,i_1,i_1^\prime$ as above, Th.~\ref{t.SKONOIT} yields elements $w \in E$ of norm $1$ and $\varphi_1 \in \Str(J_1)$ such that  $i_1 \circ R_w = \varphi_1 \circ i_1^\prime$. Using Prop.~\ref{p.SKONOIN}, we extend $\varphi_1$ to an element $\varphi \in \Str(J)$ and therefore conclude that the diagram (\ref{ISTEM}) commutes. \end{proof}

\section{Outer automorphisms for type $^3D_4$: proof of Theorem \ref{MT.D4}}

In this section, we apply Theorem \ref{t.SNOTWI} to prove Theorem \ref{MT.D4}.

\subsection{A subgroup of $\Str(J)$}\label{subgrp.Str}
Let $E$ be a cubic \'etale subalgebra of an Albert algebra $J$ and 
 write $H$ for the subgroup of $h \in \Str(J)$ that normalize $E$ and such that $Nh = N$.  Note that, for $\varphi \in \Aut(E)$, the element $\psi 
 \in \Str(J)$ provided by Theorem \ref{t.SNOTWI} to extend $\varphi \circ R_w$ to all of $J$  belongs to $H$. Indeed, as $\psi \in \Str(J)$, there is a $\mu \in \Fx$ such that $N\psi = \mu N$, but for $e \in E$ we have $N(\psi(e)) = N(\varphi(ew)) = N(\varphi(e)) N(\varphi(w)) = N(e)$.

We now describe $H$ in the case where $J$ is a matrix Jordan algebra as in \S\ref{ss.CUMA} with $\Gamma =  \Eins_3$ and $E$ is the subalgebra of diagonal matrices.  We rely on some facts that are only proved in the literature under the hypothesis $\car F \ne 2, 3$.  This hypothesis is not strictly necessary but we adopt it for now in order to ease the writing. Fix $h \in H$.  The norm $N$ restricts to $E$ as $N(\sum \alpha_i e_{ii}) = \alpha_1 \alpha_2 \alpha_3$, so $h$ permutes the three singular points $[e_{ii}]$ in the projective variety $N\vert_E = 0$ in $\mathbb{P}(E)$.  There is an embedding of the symmetric group on 3 letters, $\Sym_3$, in $H$ acting by permuting the $e_{ii}$ by their indices, see \cite[\S3.2]{G:ur} for an explicit formula, and consequently $H \cong H_0 \rtimes \Sym_3$, where $H_0$ is the subgroup of $H$ of elements normalizing $Fe_{ii}$ for each $i$.  For $w := (w_1, w_2, w_3) \in (\Fx)^{\times 3}$ such that $w_1 w_2 w_3 = 1$, 
it follows that $U_w \in H$ (cf. \eqref{NOU}) sends $e_{ii} \mapsto w_i^2e_{ii}$. 

Assuming now that $F$ is algebraically closed, after multiplying $h$ by a suitable  $U_w$, we may assume that $h$ restricts to be the identity on $E$.  The subgroup of such elements of $\Str(J)$ is identified with the $\Spin(C)$ which acts on the off-diagonal entries in $J$ as a direct sum of the three inequivalent minuscule 8-dimensional representations, see \cite[36.5, 38.6, 38.7]{KMRT} or \cite[p.~18, Prop.~6]{Jac:ex}. Thus, we may identify $H$ with 
$(R^{(1)}_{E/F}(\Gm) \cdot \Spin(C)) \rtimes \Sym_3$, where $\Sym_3$ acts via outer automorphisms on $\Spin(C)$ as in \cite[\S3]{G:ur} or \cite[35.15]{KMRT}.

\subsection{The Tits class} Recall that the Dynkin diagram of a group $G$ is endowed with an action by the absolute Galois group of $F$, and elements of $\aut(\D)(F)$ act naturally on $H^2(F, Z)$.

\begin{lem} \label{tG.zero}
Let $G$ be a group of type $D_4$ over a field $F$ with Dynkin diagram $\D$.  If there is a $\pi \in \Aut(\D)(F)$ of order 3 such that $\pi(t_G) = t_G$, then $G$ has type $^1D_4$ or $^3D_4$ and $t_G = 0$.
\end{lem}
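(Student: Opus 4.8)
The plan is to analyze the Galois action on the Dynkin diagram $\D$ of type $D_4$, use the structure of $\aut(\D) \cong S_3$ (the symmetric group on the three outer nodes), and exploit the compatibility between this action and the action on the center $Z$ and on $H^2(F,Z)$.

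\emph{First step: pin down the type.} The group $\aut(\D)(\Falg) = S_3$, and the image of the Galois group $\Gal(\Fsep/F)$ in $S_3$ determines the type: trivial image gives $^1D_4$, image of order $2$ gives $^2D_4$, image of order $3$ gives $^3D_4$, image $S_3$ gives $^6D_4$. The $F$-points $\aut(\D)(F)$ consist exactly of those $\pi \in S_3$ that commute with the image of Galois. If there exists $\pi \in \aut(\D)(F)$ of order $3$, then $\pi$ generates the unique $3$-Sylow $A_3 \subseteq S_3$, and its centralizer in $S_3$ is $A_3$ itself. Hence the image of Galois is contained in $A_3$, which rules out types $^2D_4$ and $^6D_4$ and leaves $G$ of type $^1D_4$ or $^3D_4$.

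\emph{Second step: reduce to the center.} For $D_4$ the center of the simply connected cover is $Z \cong \mu_2 \times \mu_2$ (as a group scheme, with a possibly nontrivial Galois/$S_3$-twist on the three nontrivial characters, which $S_3$ permutes). I would now compute the invariants $H^2(F,Z)^{\langle \pi \rangle}$ for $\pi$ of order $3$. The key point is that $\pi$ permutes the three nontrivial characters $\chi_1, \chi_2, \chi_3$ of $Z$ cyclically, hence permutes the three ``Tits algebras'' $\chi_i(t_G) \in H^2(F,\Gm)$ cyclically; more precisely $\pi$ acts on $Z \cong \mu_2^2$ as an order-$3$ element of $\GL_2(\IF_2) \cong S_3$, which has \emph{no} nonzero fixed points. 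Since $H^2(-, -)$ is functorial and additive in the coefficient group, and $Z$ has no $\pi$-fixed points other than $0$ already at the level of group schemes (over $\Falg$, and the fixed subscheme is defined over $F$), one gets $H^2(F,Z)^{\langle \pi \rangle} = 0$. Concretely: write $t_G = (a_1, a_2, a_3)$ with $a_1 + a_2 + a_3 = 0$ in the appropriate sense (the three Tits algebras of a $D_4$ group — the even Clifford algebras / the algebra $B$ — satisfy this relation, cf.\ \cite[\S27.B, \S31]{KMRT}); the condition $\pi(t_G) = t_G$ forces $a_1 = a_2 = a_3$, and combined with $a_1+a_2+a_3 = 0$ and $2a_i = 0$ this yields $a_i = 0$ for all $i$, i.e.\ $t_G = 0$.

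\emph{Expected main obstacle.} The delicate point is handling the group-scheme structure of $Z$ and the $\pi$-action on it \emph{without} assuming any hypothesis on $\car F$ (in particular $\car F = 2$, where $\mu_2$ is not \'etale), and making the vanishing of the fixed points of an order-$3$ automorphism of $Z$ rigorous at the level of fppf cohomology rather than just Galois cohomology. I expect one should argue that the fixed-point subgroup scheme $Z^{\langle\pi\rangle}$ is trivial — an order-$3$ automorphism of the rank-$2$ group scheme $\mu_2 \times \mu_2$ (or its twisted form $R_{\Delta/F}^{(1)}(\mu_2)$ for $\Delta$ the relevant cubic \'etale algebra) acts fixed-point-freely because $3$ is invertible and coprime to the order — and then the averaging / norm argument shows $H^2(F, Z)$ is killed by $\pi - 1$ being "invertible up to $3$", while $H^2(F,Z)$ is $2$-torsion, giving $H^2(F,Z)^{\langle\pi\rangle} = 0$ outright. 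Once that is in place, $\pi(t_G) = t_G$ together with $t_G \in H^2(F,Z)$ forces $t_G = 0$, completing the proof.
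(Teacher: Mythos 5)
Your first step (ruling out types $^2D_4$ and $^6D_4$) is the same as the paper's, which simply notes that $\Aut(\D)(F)$ is $\Z/2$ or trivial in those cases. For the vanishing of $t_G$, however, you take a genuinely different route. The paper argues by cases: in type $^1D_4$ it runs exactly your ``concrete'' computation with the three nonzero characters $\chi_i$ (using $\chi_1+\chi_2+\chi_3=0$, $2\chi_i=0$, and \cite[Prop.~7]{G:outer} to pass from $\chi_i(t_G)=0$ to $t_G=0$), and in type $^3D_4$ it base-changes to the cubic extension $E$ where $G$ becomes inner, applies the previous case, and then uses that $H^2(F,Z)\to H^2(E,Z)$ is injective because $Z$ has exponent $2$ while $[E:F]=3$. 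You instead propose a uniform argument that avoids this descent step, exploiting that the order-$3$ element $\pi$ acts on $Z$ without nonzero fixed points. That is a legitimate and arguably cleaner alternative, and note that your ``concrete'' version with $t_G=(a_1,a_2,a_3)$ is literally valid only in type $^1D_4$ (or after base change to $E$), since for $^3D_4$ the individual characters $\chi_i$ are not defined over $F$ --- precisely the point where the paper inserts the restriction--corestriction step.

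One step in your uniform argument needs tightening: as written, ``$Z$ has no nonzero $\pi$-fixed points, hence by functoriality and additivity $H^2(F,Z)^{\langle\pi\rangle}=0$'' is a non sequitur, because invariants of cohomology are not the cohomology of the fixed subscheme. What actually makes the argument work is the sharper fact you only gesture at in your ``obstacle'' paragraph: the order-$3$ automorphism of $Z$ (a form of $\mu_2\times\mu_2$) satisfies $1+\pi+\pi^2=0$ in $\End(Z)$, equivalently $\pi-1\in\Aut(Z)$; this is an identity of $F$-endomorphisms because it holds after base change to $\Falg$ and all maps involved are defined over $F$. By additivity of fppf cohomology in the (commutative) coefficient group, $\Id+\pi_*+\pi_*^2=0$ on $H^2(F,Z)$, so any $\pi$-fixed class $t$ satisfies $3t=0$; since $Z$, hence $H^2(F,Z)$, is killed by $2$, this forces $t=0$. (Alternatively, $\pi_*-\Id=(\pi-1)_*$ is bijective, so its kernel is trivial.) With this spelled out, your proof is complete, works in all characteristics, and treats $^1D_4$ and $^3D_4$ simultaneously, whereas the paper's proof trades this endomorphism identity for the two-case analysis with the transfer argument.
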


\begin{proof}
For the first claim, if $G$ has type $^2D_4$ or $^6D_4$, then $\Aut(\D)(F) = \Z/2$ or $1$.

Now suppose that $G$ has type $^1D_4$.   We may assume that $G$ is simply connected.  The center $Z$ of the simply connected cover of $G$ is $\mu_2 \times \mu_2$, with automorphism group $\Sym_3$ and $\pi$ acts on $Z$ with order 3. 
The three nonzero characters $\chi_1, \chi_2, \chi_3 \!: Z \to \Gm$ are permuted transitively by $\pi$, so by hypothesis the element $\chi_i(t_G) \in H^2(F, \Gm)$ does not depend on $i$.  As the $\chi_i$'s satisfy the equations $\chi_1 + \chi_2 + \chi_3 = 0$ and $2\chi_i = 0$ (compare \cite[6.2]{Ti:R} or \cite[9.14]{KMRT}), it follows that $\chi_i(t_G) = 0$ for all $i$, hence $t_G = 0$ by \cite[Prop.~7]{G:outer}.

If $G$ has type $^3D_4$, then there is a unique cyclic cubic field extension $E$ of $F$ such that $G \times E$ has type $^1D_4$.  By the previous paragraph, restriction $H^2(F,Z) \to H^2(E,Z)$ kills $t_G$.  That map is injective because $Z$ has exponent 2, so $t_G = 0$.
\end{proof}

In the next result, the harder, ``if'' direction is the crux case of the proof of Theorem \ref{MT.D4} and is an application of Theorem \ref{t.SNOTWI}.  The easier, ``only if'' direction amounts to \cite[Th.~13.1]{ChEKT} or \cite[Prop.~4.2]{KT:3D4}; we include it here as a consequence of the (a priori stronger) Lemma \ref{tG.zero}.

\begin{prop} \label{tG.iff}
Let $G$ be a group of type $D_4$ over a field $F$.  The image of $\alpha(F) \!: \Aut(G)(F) \to \Aut(\D)(F)$ contains an element of order 3 if and only if $G$ has type $^1D_4$ or $^3D_4$, $G$ is simply connected or adjoint, and $t_G = 0$.  
\end{prop}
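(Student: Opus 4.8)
The plan is to prove Proposition~\ref{tG.iff} by combining the cohomological obstruction \eqref{alpha}, Lemma~\ref{tG.zero}, and the structural analysis of the subgroup $H \le \Str(J)$ from \S\ref{subgrp.Str}, with Theorem~\ref{t.SNOTWI} supplying the construction that makes the ``if'' direction work. For the ``only if'' direction, suppose $\pi \in \im\alpha(F)$ has order $3$. Then $\pi \in \Aut(\D)(F)$ fixes $t_G$ by \eqref{alpha}, so Lemma~\ref{tG.zero} immediately gives that $G$ has type $^1D_4$ or $^3D_4$ and $t_G = 0$. It remains to rule out the cases where $G$ is neither simply connected nor adjoint; here one uses that the center $Z$ of the simply connected cover is $\mu_2 \times \mu_2$ with $\pi$ acting of order $3$, so the only $\pi$-stable subgroup schemes of $Z$ are $1$ and $Z$ itself, forcing $G$ to be simply connected or adjoint. (Since $\Aut(G)$ depends only on the isogeny class up to the central subgroup, and an intermediate quotient would have a non-$\pi$-stable kernel, no such $\pi$ can lift.)

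For the ``if'' direction, assume $G$ is simply connected or adjoint of type $^1D_4$ or $^3D_4$ with $t_G = 0$; we must produce $\phi \in \Aut(G)(F)$ with $\alpha(\phi)$ of order $3$. The vanishing $t_G = 0$ means $G$ has trivial Tits algebras, and combined with the type-$D_4$ structure this forces $G$ to be (the appropriate isogeny form of) $\Spin(q)$ for a suitable quadratic form, or more usefully for us, the automorphism group $\Aut(C)$-related group arising from an octonion or, in the $^3D_4$ case, a twisted-octonion situation --- concretely, $G$ is isogenous to $\Spin$ of the norm form of a composition algebra, and the relevant trialitarian outer automorphism lives inside $\PGO^+_8$. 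The strategy is to realize the order-$3$ outer automorphism via triality on $\Spin(C)$ as it sits inside $\Str(J)$ for an Albert algebra $J = \Her_3(C,\Gamma)$: by \S\ref{subgrp.Str}, for a cubic étale subalgebra $E \subseteq J$ (the diagonal subalgebra in the split case), the group $H$ of norm-preserving structure-group elements normalizing $E$ contains $\Sym_3$ acting by outer automorphisms on $\Spin(C)$, and an order-$3$ element $\varphi \in \Aut(E)$ of the cyclic group gives, via Theorem~\ref{t.SNOTWI}, an element $\psi \in H \subseteq \Str(J)$ extending $\varphi \circ R_w$. One then checks that the image of $\psi$ in $\Aut(G)(F)$ (for $G$ the relevant $D_4$ group sitting inside $\Str(J)$) maps under $\alpha$ to an order-$3$ diagram automorphism, using that $\psi$ cyclically permutes the three minuscule $8$-dimensional representations of $\Spin(C)$ and hence the three outer nodes of $\D$.

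The main obstacle --- and the place where Theorem~\ref{t.SNOTWI} is genuinely needed rather than just convenient --- is that over a general field the cubic étale algebra $E$ need not be split, so there is no frame to permute and no obvious copy of $\Sym_3$ acting on $J$; the naive ``permute the idempotents'' automorphism simply does not exist. Theorem~\ref{t.SNOTWI} circumvents this: given the order-$3$ automorphism $\varphi$ of the (possibly nonsplit, e.g.\ cyclic cubic field) algebra $E$, it guarantees a norm-one twist $w \in E$ and an extension of $\varphi \circ R_w$ to $\Str(J)$, and the computation in \S\ref{subgrp.Str} shows this extension automatically lands in $H$ and hence normalizes the $D_4$ subgroup appropriately. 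The remaining technical points are: (i) descending from the characteristic-restricted analysis of \S\ref{subgrp.Str} to arbitrary characteristic, which should follow since the relevant group-scheme statements (the semidirect decomposition $H \cong (R^{(1)}_{E/F}(\Gm)\cdot\Spin(C))\rtimes\Sym_3$ and triality) hold integrally; (ii) matching up the various isogeny forms of $G$ with the subgroups of $\Str(J)$, handling simply connected versus adjoint uniformly by passing through the adjoint group where the action on $\D$ is transparent; and (iii) verifying that $\psi$ genuinely has order-$3$ image in $\Aut(\D)(F)$ and not order $1$, which is where one invokes that $\psi$ permutes the three inequivalent half-spin representations nontrivially. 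I expect (ii) to require the most care in bookkeeping, but the conceptual heart is simply the application of Theorem~\ref{t.SNOTWI} to $\varphi \in \Aut(E)$ of order $3$.
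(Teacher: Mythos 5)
Your overall strategy coincides with the paper's: the ``only if'' direction via \eqref{alpha}, Lemma~\ref{tG.zero}, and the observation that the lifted automorphism acts on the center $\mu_2\times\mu_2$ of the simply connected cover with order $3$, so the kernel of $Z\to G$ must be trivial or all of $Z$; and the ``if'' direction by reducing to $G$ simply connected, realizing $G$ inside $\Str(J)$ as the subgroup of norm similarities fixing the cubic \'etale algebra $E$ elementwise in an Albert algebra $J$ (this is where $t_G=0$ enters), applying Theorem~\ref{t.SNOTWI} to a non-identity automorphism of $E$, and reading off from \S\ref{subgrp.Str} that conjugation by the resulting $\psi\in\Str(J)$ is an outer automorphism of $G$ whose cube is inner. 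This is exactly the paper's route, including the use of classical triality for the $^1D_4$ case.

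The one genuine gap is your point (i), the passage to characteristic $2$ and $3$. You dispose of it by asserting that the description $H\cong (R^{(1)}_{E/F}(\Gm)\cdot\Spin(C))\rtimes\Sym_3$ and the triality statements ``hold integrally,'' but the paper explicitly notes that these facts are only proved in the literature under the hypothesis that the characteristic is different from $2$ and $3$, and its proof does not rely on their validity in small characteristic. Instead it reduces to characteristic zero: take a complete discrete valuation ring $R$ with residue field $F$ and fraction field $K$ of characteristic zero, lift $E$ and the quasi-split simply connected group scheme to $R$, use Hensel's lemma together with the Bruhat--Tits injection $H^1_{\text{\'et}}(R,\mathcal{G})\hookrightarrow H^1(K,\mathcal{G}\times K)$, and observe that the preimage under $\alpha$ of a generator of $\Aut(\D)(F)=\Z/3$ is a connected component $X$ of $\Aut(G)$, i.e., a $G$-torsor, whose triviality can be tested after lifting to $K$, where the characteristic-zero case applies. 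Without such an argument (or an actual proof of the integral statements you invoke), your proof only covers characteristic different from $2$ and $3$. A smaller slip: a group of type $^3D_4$ with $t_G=0$ is not isogenous over $F$ to $\Spin$ of the norm form of a composition algebra --- that description only becomes valid after base change to the cyclic cubic extension --- but since you immediately switch to the correct realization of $G$ inside $\Str(J)$ as the subgroup preserving $N$ and fixing $E$ pointwise, this does not damage the argument.
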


\begin{proof}
\emph{\underline{``If'' }}:
We may assume that $G$ is simply connected.  If $G$ has type $^1D_4$, then $G$ is $\Spin(q)$ for some 3-Pfister quadratic form $q$, and the famous triality automorphisms of $\Spin(q)$ as in \cite[3.6.3, 3.6.4]{SpV} are of order 3 and have image in $\aut(\D)(F)$ of order 3.  So assume $G$ has type $^3D_4$.

Assume for this paragraph that $\car F \ne 2, 3$.  There is a uniquely determined cyclic Galois field extension $E$ of $F$ such that $G \times E$ has type $^1D_4$.  
By hypothesis, there is an Albert $F$-algebra $J$ with norm form $N$ such that $E \subset J$ and we may identify $G$ with the algebraic group with $K$-points
\[
\{ g \in \GL(J \ot K) \mid \text{$Ng = N$ and $g\vert_{E \ot K} = \Id_{E \ot K}$} \}
\]
for every extension $K$ of $F$. Take now $\varphi$ to be a non-identity $F$-automorphism of $E$ and $w \in E$ of norm $1$ and $\psi \in \Str(J)$ to be the elements given by Theorem \ref{t.SNOTWI} such that $\psi\vert_E = \varphi \circ R_w$.  As $\psi$ normalizes $E$ and preserves $N$, it follows immediately that $\psi$ normalizes $G$ as a subgroup of $\Str(J)$.  (Alternatively this is obvious from the fact that in subsection \ref{subgrp.Str}, $\Spin(C)$ is the derived subgroup of $H^\circ$.)  Tracking through the description of $H$ in subsection \ref{subgrp.Str}, we find that conjugation by $\psi$ is an outer automorphism of $G$ such that $\psi^3$ is inner.

\newcommand{\cG}{\mathcal{G}}
\newcommand{\Gt}{\widetilde{G}}

In case $F$ has characteristic 2 or 3, one can reduce to the case of characteristic zero as follows.  Find $R$ a complete discrete valuation ring with residue field $F$ and fraction field $K$ of characteristic zero.  Lifting $E$ to $R$ allows us to construct a quasi-split simply connected group scheme $\cG^q$ over $R$ whose base change to $F$ is the quasi-split inner form $G^q$ of $G$.  We have maps
\[
H^1(F, G^q) \xleftarrow{\sim} H^1_{\text{\'et}}(R, \cG^q) \hookrightarrow H^1(K, \cG^q \times K)
\]
where the first map is an isomorphism by Hensel and the second map is injective by \cite{BrTi3}.  Twisting by a well chosen $\cG^q$-torsor, we obtain
\[
H^1(F, G) \xleftarrow{\sim} H^1_{\text{\'et}}(R, \cG) \hookrightarrow H^1(K, \cG \times K)
\]
where $\cG \times K$ has type $^3D_4$ and zero Tits class and $G \cong \cG \times F$.  Now in $\Aut(G)(F) \to \Aut(\D)(F) = \Z/3$, the inverse image of 1 is a connected component $X$ of $\Aut(G)$ defined over $F$, a $G$-torsor.  Lifting $X$ to $H^1(K, \cG \times K)$, we discover that this $G$-torsor is trivial (by the characteristic zero case of the theorem), hence $X$ is $F$-trivial, i.e., has an $F$-point.
  
  \smallskip
  \emph{\underline{ ``Only if'' }}: Let $\phi \in \Aut(G)(F)$ be such that $\alpha(\phi)$ has order 3.  In view of the inclusion \eqref{alpha}, Lemma \ref{tG.zero} applies.  If $G$ has type $^3D_4$, then it is necessarily simply connected or adjoint, so assume $G$ has type $^1D_4$.  Then $\phi$ lifts to an automorphism of the simply connected cover $\Gt$ of $G$, hence acts on the center $Z$ of $\Gt$ in such a way that it preserves the kernel of the map $Z \to G$.  As $Z$ is isomorphic to $\mu_2 \times \mu_2$ and $\phi$ acts on it as an automorphism of order 3, the kernel must be 0 or $Z$, hence $G$ is simply connected or adjoint.
\end{proof}

\subsection{Proof of Theorem \ref{MT.D4}}
Let $G$ be a group of type $^3D_4$, so $\aut(\D)(F) = \Z/3$; put $\pi$ for a generator.  If $\pi(t_G) \ne t_G$, then the right side of \eqref{alpha} is a singleton and the containment is trivially an equality, so assume $\pi(t_G) = t_G$.  Then $t_G = 0$ by Lemma \ref{tG.zero} and the conclusion follows by Proposition \ref{tG.iff}.
\hfill $\qed$

\begin{eg} \label{KT.counter}
Let $F_0$ be a field with a cubic Galois extension $E_0$.  For the split adjoint group $\PSO_8$ of type $D_4$ over $F$, a choice of pinning gives an isomorphism of $\Aut(\PSO_8)$ with $\PSO_8 \rtimes \Sym_3$ where $\Sym_3$ denotes the symmetric group on 3 letters, such that elements of $\Sym_3$ normalize the Borel subgroup appearing in the pinning.  Twisting $\Spin_8$ by a 1-cocycle with values in $H^1(F_0, \Sym_3)$ representing the class of $E_0$ gives a simply connected quasi-split group $G^q$ of type $^3D_4$.  As in \cite[pp.~11, 12]{GMS}, there exists an extension $F$ of $F_0$ and a versal torsor $\xi \in H^1(F, G^q)$; define $G$ to be $G^q \times F$ twisted by $\xi$.  As $\xi$ is versal, the Rost invariant $r_{G^q}(\xi) \in H^3(F, \Z/6\Z)$ has maximal order, namely 6 \cite[p.~149]{GMS}.  Moreover, the map $\alpha(F) \!: \Aut(G)(F) \to \Aut(\Delta)(F) = \Z/3$ is onto by Theorem \ref{MT.D4}. 

In case $\car F_0 \ne 2, 3$, $G$ is $\Aut(\Gamma)$ for some twisted composition $\Gamma$ in the sense of \cite[\S36]{KMRT}.   As $r_{G^q}(\xi)$ is not 2-torsion, by \cite[40.16]{KMRT}, $\Gamma$ is not Hurwitz, and by \cite{KT:3D4}, $\aut(G)(F)$ contains no outer automorphisms of order 3.  This is a newly observed phenomenon, in that in all other cases where $\alpha(F)$ is known to be onto, it is also split.
\end{eg}

\section{Outer automorphisms for type $A$}

\subsection{Groups of type $A_n$} We now consider Conjecture \ref{outer.conj} and Question \ref{question.refined} for groups $G$ of type $A_n$.  If $G$ has inner type (i.e., is isogenous to $\SL_1(B)$ for a degree $d$ central simple $F$-algebra) then equality holds in \eqref{alpha} and the answer to Question \ref{question.refined} is ``yes'' as in \cite[p.~232]{G:outer}.  

So assume that $G$ has outer type and in particular $n \ge 2$.  The simply connected cover of $G$ is  $\SU\Bt$ for $B$ a central simple $K$-algebra of degree $d := n+1$, where $K$ is a quadratic \'etale $F$-algebra, and $\tau$ is a unitary $K/F$-involution.  (This generalizes the $(K, B, \tau)$ defined in \S\ref{ss.ASIN} by replacing 3 by $d$.)  As the center $Z$ of $\SU\Bt$ is the group scheme $(\mu_d)_{[K]}$ of $d$-th roots of unity twisted by $K$ in the sense of \cite[p.~418]{KMRT} (i.e., is the Cartier dual of the finite \'etale group scheme $(\Z/d)_{[K]}$), every subgroup of $Z$ is characteristic, hence \eqref{alpha} is an equality for $G$ if and only if it is so for $\SU\Bt$ and similarly the answers to Question \ref{question.refined} are the same for $G$ and $\SU\Bt$.  Therefore, we need only treat $\SU\Bt$ below.

The automorphism group $\Aut(\D)(F)$ is $\Z/2$ and its nonzero element $\pi$ acts on $H^2(F, Z)$ as $-1$, hence $\pi(t_{\SU\Bt}) = -t_{\SU\Bt}$ and the right side of \eqref{alpha} is a singleton (if $2t_{\SU\Bt} \ne 0$) or has two elements (if $2t_{\SU\Bt} = 0$).  These cases are distinguished by the following lemma.

\begin{lem} \label{tG.A} In case $d$ is even (resp., odd):
$2t_{\SU\Bt} = 0$ if and only if $B \ot_K B$ (resp., $B$) is a matrix algebra over $K$.
\end{lem}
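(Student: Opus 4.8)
The plan is to reduce the claim to a computation in $H^2(F,Z)$ using the explicit description of the Tits class $t_{\SU(B,\tau)}$ and of the center $Z = (\mu_d)_{[K]}$. Write $\chi\colon Z \hookrightarrow (\Gm)_{[K]} = R_{K/F}(\Gm)$ for the natural inclusion character; then the group of characters of $Z$ is cyclic of order $d$ generated by $\chi$, and by \cite[\S27.B]{KMRT} the Tits algebra $\chi(t_{\SU(B,\tau)}) \in H^2(F, R_{K/F}(\Gm)) = \operatorname{Br}(K)$ is the class $[B]$. The corestriction $\operatorname{cor}_{K/F}[B]$ vanishes since $B$ carries a unitary $K/F$-involution, so $[B]$ lies in the relative Brauer-type subgroup; this is the structural fact underlying the asymmetry between the even and odd cases.

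First I would record that $2t_{\SU(B,\tau)} = 0$ in $H^2(F,Z)$ if and only if $2t_{\SU(B,\tau)}$ is killed by every character of $Z$, equivalently by $\chi$ (since $\chi$ generates the character group, $j\chi(2t) = 2\chi(jt)$ and all of these vanish as soon as $\chi(2t)$ does—more precisely $2t=0$ iff $\chi(2t)=0$ because $\chi$ is injective on $Z$ and hence the induced map on $H^2$ has trivial kernel after accounting for the cyclic structure). So the condition becomes $\chi(2t_{\SU(B,\tau)}) = 2[B] = 0$ in $\operatorname{Br}(K)$, i.e., $B \otimes_K B$ is a matrix algebra over $K$. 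This already settles the even case directly.

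Next, for the odd case $d = n+1$ odd, I would use that $[B]$ has order dividing $d$ in $\operatorname{Br}(K)$ (since $\deg B = d$), and that $d$ is odd, so multiplication by $2$ is invertible on the cyclic group generated by $[B]$; hence $2[B] = 0$ forces $[B] = 0$, i.e., $B$ itself is a matrix algebra over $K$. Conversely if $B$ is split then certainly $2[B]=0$. Combined with the previous paragraph this gives exactly the stated dichotomy. The one point requiring a little care is the passage from "$\chi(2t) = 0$" to "$2t = 0$": I would justify this by noting that the map $\chi_*\colon H^2(F,Z) \to H^2(F, R_{K/F}(\Gm))$ fits into the long exact sequence coming from $1 \to Z \to R_{K/F}(\Gm) \xrightarrow{z \mapsto z^d/\text{(something)}} \cdots$, or more simply by invoking the identification of $Z$ with the Cartier dual of $(\Z/d)_{[K]}$ together with \cite[Prop.~7]{G:outer}-type reasoning that an element of $H^2(F,Z)$ killed by all characters is zero.

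I expect the main obstacle to be the bookkeeping around the twisted group scheme $(\mu_d)_{[K]}$: one must be careful that the character group is $(\Z/d)_{[K]}$ as a Galois module and that evaluating $t_{\SU(B,\tau)}$ against $\chi$ really does return $[B] \in \operatorname{Br}(K)$ rather than, say, $\operatorname{cor}_{K/F}[B]$ or a twisted variant. Once the dictionary "character $\chi \leftrightarrow$ Tits algebra $[B]$ over $K$" is pinned down cleanly, the rest is the elementary observation that doubling is invertible modulo an odd number but not modulo an even one.
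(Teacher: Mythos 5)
Your overall route is the same as the paper's --- evaluate $2t_{\SU\Bt}$ against characters of the cocenter $(\Z/d)_{[K]}$, use that the Tits algebra attached to the generator is $[B] \in H^2(K,\Gm)$, and in the odd case use that $2$ is invertible modulo $d$ --- but the justification of your central step is flawed. You assert that $2t_{\SU\Bt}=0$ if and only if $\chi(2t_{\SU\Bt})=0$ because $\chi\colon Z \hookrightarrow R_{K/F}(\Gm)$ is injective and ``hence the induced map on $H^2$ has trivial kernel.'' Injectivity of a homomorphism of group schemes does not pass to $H^2$, and here it genuinely fails when $d$ is even: writing $Q := R_{K/F}(\Gm)/Z$, a computation with character lattices gives $Q \cong \Gm \times R^{(1)}_{K/F}(\Gm)$ for $d$ even, so the sequence $1 \to Z \to R_{K/F}(\Gm) \to Q \to 1$ together with $H^1(F,R_{K/F}(\Gm))=0$ shows that the kernel of $H^2(F,Z) \to H^2(F,R_{K/F}(\Gm)) \cong H^2(K,\Gm)$ is isomorphic to $F^\times/N_{K/F}(K^\times)$, which is usually nonzero. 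Relatedly, your reduction ``killed by every character $\Leftrightarrow$ killed by $\chi$'' conflates characters defined over $F$ with characters defined over $K$: the cocenter is $(\Z/d)_{[K]}$ with $\Gal(K/F)$ acting by $i \mapsto -i$, so the only $F$-rational characters are $\chi_0$ and, when $d=2e$ is even, $\chi_e$; the Tits algebra $\chi_e(2t_{\SU\Bt})$ lives in $H^2(F,\Gm)$, and its vanishing is not a formal consequence of $2[B]=0$ in $H^2(K,\Gm)$. Your appeal to \cite[Prop.~7]{G:outer} requires each character to be evaluated over its field of definition, so for even $d$ this extra character must be dealt with separately --- exactly the point your argument skips.

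The missing observation, which is how the paper closes the argument, is that $\chi_e$ is $2$-torsion in the cocenter, so $\chi_e(2t_{\SU\Bt}) = (2\chi_e)(t_{\SU\Bt}) = 0$ automatically, regardless of $B$. Granting this, \cite[Prop.~7]{G:outer} reduces the vanishing of $2t_{\SU\Bt}$ to the vanishing of $\chi_i(2t_{\SU\Bt}) = 2i[B] \in H^2(K,\Gm)$ for the remaining characters (all defined over $K$), which holds for all $i$ if and only if $2[B]=0$, i.e., if and only if $B \ot_K B$ is a matrix algebra; your treatment of the odd case ($2[B]=0$ and $d[B]=0$ with $d$ odd force $[B]=0$) is correct and matches the paper. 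So the proposal is reparable by a one-line observation, but as written the claimed injectivity on $H^2$ is false and the even-degree character is left unaccounted for.
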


\begin{proof}
The cocenter $Z^* := \Hom(Z, \Gm)$ is $(\Z/d)_{[K]}$; put $\chi_i \in Z^*$ for the element corresponding to $i \in (\Z/d)_{[K]}$.  If $d = 2e$ for some integer $e$, then the element $\chi_e$ is fixed by $\Gal(F)$ and $2\chi_e = \chi_d = 0$, regardless of $B$ or $t_{\SU\Bt}$.  All other $\chi_i$ have stabilizer subgroup $\Gal(K)$ and $\chi_i(2t_{\SU\Bt}) \in H^2(K, Z)$ can be identified with the class of $B^{\ot 2i}$ in the Brauer group of $K$, cf.~\cite[p.~378]{KMRT}.

The algebra $B \ot_K B$ is a matrix algebra, then, if and only if $\chi_i$ vanishes on $2t_{\SU\Bt}$ for all $i$.  This is equivalent to $2t_{\SU\Bt} = 0$ by \cite[Prop.~7]{G:outer}.  When the degree $d$ of $B$ is odd, $B \ot_K B$ is a matrix algebra if and only if $B$ is such.
\end{proof}

\begin{cor} \label{conj.A}
If $G$ is a group of type $A_n$ for $n$ even, then equality holds in \eqref{alpha} and the answer to Question \ref{question.refined} is ``yes''.
\end{cor}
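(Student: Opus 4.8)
The plan is to reduce immediately to the case $G=\SU\Bt$, where $B$ is central simple of degree $d:=n+1$ over a quadratic \'etale $F$-algebra $K$ and $\tau$ is a unitary $K/F$-involution. The inner case has already been settled above, and for outer type the reduction recorded above shows that it is enough to treat $\SU\Bt$. The hypothesis that $n$ is even says exactly that $d$ is odd, and this parity is the arithmetic fact driving the whole argument. Write $\pi$ for the nonzero element of $\Aut(\D)(F)=\Z/2$; since $\pi$ acts as $-1$ on $H^2(F,Z)$ we have $\pi(t_G)=-t_G$, so the right-hand side of \eqref{alpha} is all of $\Aut(\D)(F)$ when $2t_G=0$ and is the singleton $\{1\}$ otherwise. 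I would split the proof along this dichotomy.

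If $2t_G\neq 0$, the right-hand side of \eqref{alpha} is $\{1\}$; as the image of $\alpha(F)$ always contains $1$ (realized by $\Id_G$), the containment \eqref{alpha} is an equality, and Question~\ref{question.refined} is answered by $\phi=\Id_G$, which has the same order as $\pi=1$.

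If $2t_G=0$, then in fact $t_G=0$, because the centre $Z=(\mu_d)_{[K]}$ has odd order $d$, so multiplication by $2$ is invertible on $H^2(F,Z)$. Lemma~\ref{tG.A} then gives that $B$ is a matrix algebra over $K$, so $\tau$ is the adjoint involution of a nondegenerate hermitian form $h$ of rank $d$ for $K/F$. The crucial step is to deduce from this that $\SU\Bt$ is \emph{quasi-split}. Here I would use that $\SU(\lambda h)=\SU(h)$ for every $\lambda\in F^\times$ while $\mathrm{disc}(\lambda h)=\lambda^{d}\,\mathrm{disc}(h)$; since $d$ is odd one has $\lambda^{d}\equiv\lambda\pmod{N_{K/F}(K^\times)}$, so as $\lambda$ varies the discriminant runs over every class in $F^\times/N_{K/F}(K^\times)$. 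By the classification of hermitian forms over the quadratic field extension $K/F$ by rank and discriminant, $h$ is therefore similar to a hermitian form of maximal Witt index, whence $\SU\Bt\cong\SU_d$ is quasi-split. Once quasi-splitness is available I would finish by appealing to the standard structure of the automorphism group scheme of a quasi-split group: a pinning defined over $F$ splits $\Aut(G)$ as $\Aut(G)^\circ\rtimes\Aut(\D)$ over $F$ (see \cite[Chap.~XXIV]{SGA3}), and the resulting section $\Aut(\D)\to\Aut(G)$ of group schemes over $F$ induces on $F$-points a group-theoretic section of $\alpha(F)$. This makes $\alpha(F)$ onto — so \eqref{alpha} is an equality — and, being a homomorphism, it lifts each $\pi\in\Aut(\D)(F)$ to an automorphism of $G$ of the same order, settling Question~\ref{question.refined}.

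The step I expect to be the real obstacle is the implication ``$B$ split and $d$ odd $\Rightarrow$ $\SU\Bt$ quasi-split'': one has to make sure that the scaling/similarity argument above, together with the classification of hermitian forms it invokes, goes through with no restriction on $\ch F$, and that $K$ is honestly a field at this point (which it is, $G$ having outer type). The rest is bookkeeping with \eqref{alpha} and an appeal to the general theory of quasi-split groups.
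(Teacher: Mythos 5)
Your reduction to $\SU\Bt$, the dichotomy on $2t_{\SU\Bt}$, and the use of Lemma \ref{tG.A} all match the paper's argument, and the observation that $2t_{\SU\Bt}=0$ forces $t_{\SU\Bt}=0$ for $d$ odd is correct (if unneeded). But the step you yourself flagged as the obstacle is in fact wrong: hermitian forms over a quadratic field extension $K/F$ are \emph{not} classified by rank and discriminant, so your scaling argument only lets you normalize the discriminant, not conclude that $h$ is similar to a form of maximal Witt index. Concretely, take $F=\R$, $K=\C$, $d=3$ and $h$ positive definite: here $B=M_3(K)$ is split and $d$ is odd, yet $\SU\Bt$ is the compact group $\SU(3)$, which is not quasi-split. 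So the implication ``$B$ split and $d$ odd $\Rightarrow \SU\Bt$ quasi-split'' fails, and with it your appeal to an $F$-pinning and the semidirect product splitting of $\Aut(G)$.

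The gap is repairable, and the paper's (terse) route avoids quasi-splitness altogether: once $B$ is a matrix algebra, $\tau$ is the adjoint involution of a $K/F$-hermitian form $h$, which can be diagonalized so that its Gram matrix has entries in $F^\times$. Then the entrywise conjugation $g\mapsto \bar g$ is an $F$-defined automorphism of $\SU\Bt$ of order $2$ inducing the nontrivial diagram automorphism (it inverts the center), which simultaneously gives equality in \eqref{alpha} and a positive answer to Question \ref{question.refined}; equivalently, $(B,\tau)\cong (M_d(F)\otimes K,\ \tau_0\otimes\iota)$ with $\tau_0$ the adjoint involution of a symmetric bilinear form over $F$, which is exactly the criterion of Corollary \ref{A.strong}. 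Your compact example above shows why this weaker conclusion (existence of an order-$2$ outer automorphism over $F$) is the right target: it holds even when the group is far from quasi-split.
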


\begin{proof}
We may assume that $G$ has outer type and is $\SU\Bt$.
If $2t_{\SU\Bt} \ne 0$, then the right side of \eqref{alpha} is a singleton and the claim is trivial.  Otherwise, by Lemma \ref{tG.A}, $B$ is a matrix algebra, i.e., $\SU\Bt$ is the special unitary group of a $K/F$-hermitian form, and the claim follows.
\end{proof}

\subsection{}  The algebraic group $\Aut(\SU\Bt)$ has two connected components: the identity component, which is identified with the adjoint group of $\SU\Bt$, and the other component, whose  $F$-points  are the outer automorphisms of $\SU\Bt$. 

\begin{thm} \label{A.weak}
There is an isomorphism between the $F$-variety of $K$-linear anti-automorphisms of $B$ commuting with $\tau$ and the non-identity component of $\SU\Bt$, given by sending an anti-automorphism $\psi$ to the outer automorphism $g \mapsto \psi(g)^{-1}$.
\end{thm}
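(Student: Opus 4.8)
The plan is to construct the isomorphism explicitly and check it is well-defined, bijective, and a morphism of varieties. First I would fix notation: let $\Bt = \SU\Bt$ be the simply connected group with center $Z$, and let $\bar G$ be its adjoint quotient, which is the identity component of $\Aut(\Bt)$. Given a $K$-linear anti-automorphism $\psi$ of $B$ commuting with $\tau$, the composite $\tau' := \psi \circ \tau$ is a $K$-linear \emph{automorphism} of $B$ commuting with $\tau$, and $\psi^2 = (\tau'\tau)^2$; more importantly, the map $\sigma_\psi \!: g \mapsto \psi(g)^{-1}$ is a $K$-linear automorphism of the algebraic group $\Bt$ (an anti-automorphism followed by inversion is a group automorphism of the underlying group, and it preserves the unitary condition because $\psi$ commutes with $\tau$ and $\psi(g)^{-1}$ has reduced norm $1$ over $K$). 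Since $\sigma_\psi$ does not commute with $\tau$-twisted conjugation in general — concretely, because an anti-automorphism of $B$ composed with inversion acts on the Dynkin diagram as the nontrivial element — $\sigma_\psi$ lies in the non-identity component of $\Aut(\Bt)(F)$. This defines a morphism from the variety $\mathcal{A}$ of such $\psi$ to the non-identity component $\Aut(\Bt)^{\mathrm{out}}$.

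Next I would show the morphism is injective and surjective on points (over every extension, so that it is an isomorphism of varieties). For injectivity: if $\psi(g)^{-1} = \psi'(g)^{-1}$ for all $g$ in $\Bt(R)$ and all $R$, then $\psi$ and $\psi'$ agree on the Zariski-dense subgroup generated by $\Bt$ inside $B^\times$, and since $B$ is generated as a $K$-algebra by the group of units of reduced norm $1$ (true for a central simple algebra of degree $\ge 2$ — one can reduce to the split case or argue directly that the span of $\SL_d$ is all of $\Mat_d$), it follows that $\psi = \psi'$. For surjectivity: given $\sigma$ in the non-identity component, $\sigma$ is an automorphism of $\Bt$ as an algebraic group inducing the nontrivial diagram automorphism, so by the standard description of automorphisms of $\SU\Bt$ (as in \cite[Th.~26.9]{KMRT} and the surrounding material), $\sigma$ has the form $g \mapsto \psi(g)^{-1}$ for some $K$-linear anti-automorphism $\psi$ of $B$ commuting with $\tau$, uniquely determined by the previous paragraph. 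One must check that this $\psi$ is uniquely recoverable and that the assignment $\sigma \mapsto \psi$ is a morphism — this follows because both the variety $\mathcal{A} \subseteq \GL(B)$ and $\Aut(\Bt)^{\mathrm{out}}$ sit inside linear groups and the formulas relating $\psi$ and $\sigma$ (on the level of the action on $B$, recovered via the natural representation) are polynomial with polynomial inverse.

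I would then package the bijectivity as an isomorphism of $F$-\emph{varieties} by noting that both sides are homogeneous under $\bar G = \Bt/Z$ (acting on $\mathcal{A}$ by $\psi \mapsto \Int(g) \circ \psi \circ \Int(g)^{-1}$, equivalently twisting, and on $\Aut(\Bt)^{\mathrm{out}}$ by left translation by the inner automorphism $\Int(g)$), and the map we have constructed is $\bar G$-equivariant; an equivariant bijective morphism between homogeneous spaces under a smooth affine group is an isomorphism. Alternatively, since we have exhibited an inverse morphism directly, we may simply invoke that a bijective morphism with a morphism inverse is an isomorphism.

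The main obstacle I expect is the surjectivity step: pinning down the precise form of an arbitrary outer automorphism of $\SU\Bt$ in a characteristic-free way. Over an algebraically closed field this is classical, but here one needs a descent argument — an outer automorphism $\sigma$ defined over $F$ determines, after base change to $K$ (where $\Bt \times K \cong \SL_1(B)$), an automorphism of $\SL_1(B)$ composed with the diagram flip, hence an anti-automorphism of $B$ over $K$; the content is that the compatibility of $\sigma$ with the $\Gal(K/F)$-action (encoded by $\tau$) forces this anti-automorphism to commute with $\tau$ and to be defined in the required sense. I would handle this by working functorially: realize $\Bt \subseteq \GL_1(B)$ (restriction of scalars), use that $\sigma$ extends to an automorphism of the ambient $\mathbf{GL}$-picture compatible with $\tau$, and extract $\psi$ from the induced map on the standard $B$-bimodule, citing \cite[\S23, \S26]{KMRT} for the correspondence between group automorphisms and algebra (anti-)automorphisms in the presence of an involution.
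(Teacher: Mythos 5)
Your final step—upgrading the pointwise bijection to an isomorphism of varieties via homogeneity—contains a genuine error. Over $\Fsep$ one may identify $K = F \times F$, $B = \Mat_d(F) \times \Mat_d(F)$, $\tau(b_1,b_2) = (b_2^t, b_1^t)$; by Skolem--Noether the $K$-linear anti-automorphisms commuting with $\tau$ are exactly $\psi_x(b_1,b_2) = (x b_1^t x^{-1},\, x^{-t} b_2^t x^t)$ with $x \in \PGL_d$. The conjugation action you propose on this variety, $\psi \mapsto \mathrm{Int}(g) \circ \psi \circ \mathrm{Int}(g)^{-1}$ for $g$ in the adjoint group, translates into the congruence action $x \mapsto g x g^t$, whose orbits are congruence classes with positive-dimensional stabilizers (orthogonal or symplectic groups); so the source is \emph{not} homogeneous under that action, and your map is not equivariant for the pair (conjugation on the source, left translation on the target)---equivariance would force $\psi(h) = \psi(g^{-1} h g)$ identically. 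The structure that actually works, and is what the paper exploits, is composition: both the anti-automorphisms commuting with $\tau$ and the outer component are $\PGL_d$-torsors under composition with inner automorphisms, and the map $\psi \mapsto \phi$, $\phi(g) = \psi(g)^{-1}$, intertwines the two torsor structures only up to the inversion twist $y.\psi = y^{-1}.\phi$. Note also that your fallback principle ``a bijective equivariant morphism of homogeneous spaces under a smooth affine group is an isomorphism'' fails in positive characteristic (Frobenius-type morphisms are bijective on points), and the theorem is asserted with no restriction on $\ch(F)$.

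The second gap is surjectivity, i.e.\ producing the inverse morphism, which is where the real content lies: you defer it to a citation of the classification of automorphisms of $\SU\Bt$ plus a Galois-descent argument that you yourself flag as the unresolved obstacle, and you never exhibit $\sigma \mapsto \psi$ as a morphism. The paper avoids classifying outer automorphisms over $F$ altogether: it proves the statement over $\Fsep$ by the explicit Skolem--Noether computation above (commutation with $\tau$ forcing $x_2 = x_1^{-t}$) together with the composition-torsor comparison, which yields an isomorphism of varieties over $\Fsep$; since the morphism $\psi \mapsto \phi$ is visibly defined over $F$, being an isomorphism after base change to $\Fsep$ it is an isomorphism over $F$. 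If you want to salvage your outline, the cleanest repair is to replace your conjugation action by the composition action, run the torsor comparison over $\Fsep$, and then descend the morphism, rather than attempting a pointwise description of outer automorphisms over $F$.
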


Clearly, such an anti-automorphism provides an isomorphism of $B$ with its opposite algebra, hence can only exist when $B$ has exponent 2.  This is a concrete illustration of the inclusion \eqref{alpha}.

\begin{proof}
First suppose that $F$ is separably closed, in which case we may identify $K = F \times F$,  $B = M_d(F) \times M_d(F)$, and $\tau(b_1, b_2) = (b_2^t, b_1^t)$.  A $K$-linear anti-automorphism $\psi$ is, by Skolem-Noether, of the form $\psi(b_1, b_2) = (x_1 b_1^t x_1^{-1}, x_2 b_2^t x_2^{-1})$ for some $x_1, x_2 \in \PGL_d(F)$, and the assumption that $\psi\tau=\tau\psi$ forces that $x_2 = x_1^{-t}$.

As $\Nrd_{B/K}\psi = \Nrd_{B/K}$, it follows that $\psi$ is an automorphism of the variety $\SU\Bt$, hence $\phi$ defined by $\phi(g) := \psi(g)^{-1}$ is an automorphism of the group.  As $\phi$ acts nontrivially on the center --- $\phi(b) = b^{-1}$ for $b \in K^\times$ --- $\phi$ is an outer automorphism.

We have shown that there is a well-defined morphism from the variety of anti-automorphisms commuting with $\tau$ to the outer automorphisms of $\SU\Bt$, and it remains to prove that it is an isomorphism.  For this, note that $\PGL_d$ acts on $\SU\Bt$ where the group action is just function composition, that this action is the natural action of the identity component of $\SU\Bt$ on its other connected component, and that therefore the outer automorphisms are a $\PGL_d$-torsor.  Furthermore, the first paragraph of the proof showed that the anti-automorphisms commuting with $\tau$ also make up a $\PGL_d$-torsor, where the actions are related by $y.\psi = y^{-1}.\phi$ for $y \in \PGL_d$.  This completes the proof for $F$ separably closed.

For general $F$, we note that the map $\psi \mapsto \phi$ is $F$-defined and gives an isomorphism over $\Fsep$, hence is an isomorphism over $F$.
\end{proof}

\subsection{}
We do not know how to prove or disprove existence of an anti-automorphism commuting with $\tau$ in general, but we can give a criterion for Question \ref{question.refined} that is analogous to the one given in \cite{KT:3D4} for groups of type $^3D_4$.

\begin{cor} \label{A.strong}
A group $\SU\Bt$ of outer type $A$ has an $F$-defined outer automorphism of order 2 if and only if there exists a central simple algebra $(B_0, \tau_0)$ over $F$ with $\tau_0$ an involution of the first kind such that $(B, \tau)$ is isomorphic to $(B_0 \ot K, \tau_0 \ot \iota)$, for $\iota$ the non-identity $F$-automorphism of $K$.
\end{cor}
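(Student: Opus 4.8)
The plan is to translate, by means of Theorem~\ref{A.weak}, the existence of an order‑$2$ outer automorphism of $\SU\Bt$ into the existence of a first‑kind involution of $B$ commuting with $\tau$, and then to recognize that condition as a descent statement along $K/F$.

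First I would invoke Theorem~\ref{A.weak}: an outer $F$‑automorphism $\phi$ of $\SU\Bt$ is the same datum as a $K$‑linear anti‑automorphism $\psi$ of $B$ commuting with $\tau$, via $\phi(g) = \psi(g)^{-1}$. Since $\psi$, being an anti‑automorphism, sends inverses to inverses, a short computation gives $\phi^2(g) = \psi\bigl(\psi(g)^{-1}\bigr)^{-1} = \psi^2(g)$ for all $g \in \SU\Bt$. The one point needing justification is the passage from ``$\phi^2 = \mathrm{id}$ on the group $\SU\Bt$'' to ``$\psi^2 = \mathrm{id}$ on the algebra $B$'': here I would note that $\psi^2$ is an $F$‑linear self‑map of $B$ agreeing with the identity on $\SU\Bt$, and that $\SU\Bt$ lies in no proper $K$‑subspace of $B$ --- over $\Fsep$ it becomes $\SL_d \subseteq \Mat_d(\Fsep)$, whose $\Fsep$‑span is everything --- so $\psi^2 = \mathrm{id}_B$. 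Since an outer $\phi$ is necessarily $\ne \mathrm{id}$, this yields the reformulation: \emph{$\SU\Bt$ has an $F$‑defined outer automorphism of order $2$ if and only if $B$ admits a $K$‑linear involution commuting with $\tau$} (and such an involution is automatically of the first kind, being $K$‑linear on a central simple $K$‑algebra).

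The ``if'' direction of the corollary is then immediate: if $(B,\tau) \cong (B_0 \ot K,\tau_0 \ot \iota)$ with $\tau_0$ a first‑kind involution on a central simple $F$‑algebra $B_0$, then $\tau_0 \ot \mathrm{id}_K$ is a $K$‑linear involution of $B$ commuting with $\tau$. For the ``only if'' direction I would start from a $K$‑linear first‑kind involution $\psi$ of $B$ with $\psi\tau = \tau\psi$ and set $\sigma := \tau \circ \psi$. As a composite of two anti‑automorphisms, $\sigma$ is a ring automorphism of $B$; it is $\iota$‑semilinear over $K$ since $\psi$ is $K$‑linear and $\tau|_K = \iota$; and $\sigma^2 = \tau\psi\tau\psi = \tau^2\psi^2 = \mathrm{id}$ because $\psi$ and $\tau$ commute. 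Galois descent along the quadratic \'etale $F$‑algebra $K$ now produces a central simple $F$‑algebra $B_0 := \{b \in B \mid \sigma(b) = b\}$ with $B_0 \ot_F K \cong B$ (uniformly in $K$: if $K \cong F \times F$ this is the familiar $B \cong A \times A^{\mathrm{op}}$ with $\sigma$ the exchange, giving $B_0 \cong A$). Since $\tau\sigma = \tau(\psi\tau) = (\tau\psi)\tau = \sigma\tau$, the anti‑automorphism $\tau$ stabilizes $B_0$, so $\tau_0 := \tau|_{B_0}$ is an $F$‑linear, hence first‑kind, involution of $B_0$; and unwinding the descent identifies $(B,\tau)$ with $(B_0 \ot K, \tau_0 \ot \iota)$, completing the proof.

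The main obstacle I anticipate is bookkeeping rather than depth: getting the group‑to‑algebra step rigorously right (the spanning argument above, or equivalently a Skolem--Noether/centralizer argument), and confirming that the descent step in the ``only if'' direction is unaffected by whether the quadratic \'etale algebra $K$ is a field or splits as $F \times F$.
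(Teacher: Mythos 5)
Your proposal is correct and follows essentially the same route as the paper: use Theorem \ref{A.weak} to identify order-2 outer automorphisms with order-2 anti-automorphisms commuting with $\tau$, then descend along $K/F$ via the semilinear automorphism $\tau\psi$ (the paper's $\tau_0\tau$) and restrict $\tau$ to the fixed algebra $B_0$. The only difference is that you spell out details the paper leaves implicit (the passage from $\phi^2=\mathrm{id}$ on the group to $\psi^2=\mathrm{id}$ on $B$, where your spanning argument should really use the $K$-span of $\SU\Bt$ inside $B\otimes\Fsep\cong\Mat_d\times\Mat_d$, and the standard descent facts), which is fine.
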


\begin{proof}
The bijection in Theorem \ref{A.weak} identifies outer automorphisms of order 2 with anti-automorphisms of order 2.  If such a $(B_0, \tau_0)$ exists, then clearly $\tau_0$ provides an anti-automorphism of order 2.

Conversely, given an anti-automorphism $\tau_0$ of order 2, define a semilinear automorphism of $B$ via $\iota := \tau_0 \tau$.  Set $B_0 := \{ b \in B \mid \iota(b) = b \}$; it is an $F$-subalgebra and $\tau_0$ restricts to be an involution on $B_0$.
\end{proof}

\begin{eg}\label{A.no}
We now exhibit a $\Bt$ with $B$ of exponent 2, but such that $\SU\Bt$ has no outer automorphism of order 2 over $F$.  The paper \cite{ART} provides a field $F$ and a division $F$-algebra $C$ of degree 8 and exponent 2 such that $C$ is not a tensor product of quaternion algebras.  Moreover, it provides a quadratic extension $K/F$ contained in $C$.  It follows that $C \ot K$ has index 4, and we set $B$ to be the underlying division algebra.  As $\cores_{K/F}[B] = 2[C] = 0$ in the Brauer group, $B$ has a unitary involution $\tau$.

For sake of contradiction, suppose that $\SU\Bt$ had an outer automorphism of order 2, hence there exists a $(B_0, \tau_0)$ as in Corollary \ref{A.strong}.  Then $B_0$ has degree 4, so $B_0$ is a biquaternion algebra.  Moreover, $C \ot B_0$ is split by $K$, hence is Brauer-equivalent to a quaternion algebra $Q$.  By comparing degrees, we deduce that $C$ is isomorphic to $B_0 \ot Q$, contradicting the choice of $C$.
\end{eg}

\subsection{Type $^2E_6$}
Results entirely analogous to  Theorem \ref{A.weak}, Corollary \ref{A.strong}, and Example \ref{A.no} also hold  for groups $G$ of type $^2E_6$, using proofs of a similar flavor.  The Dynkin diagram of type $E_6$ has automorphism group $\Z/2 = \{ \Id, \pi \}$, and arguing as in Lemmas \ref{tG.zero} or \ref{tG.A} shows that $\pi(t_G) = t_G$ if and only if $t_G = 0$.  So for addressing Conjecture \ref{outer.conj} and Question \ref{question.refined}, it suffices to consider only those groups with zero Tits class, which can be completely described in terms of the hermitian Jordan triples introduced in \cite[\S4]{GPe} or the Brown algebras studied in \cite{G:struct}.  We leave the details to the interested reader.

Does Conjecture \ref{outer.conj} hold for every group of type $^2E_6$?  One might hope to imitate the outline of the proof of Theorem \ref{MT.D4}.  Does an analogue of Theorem \ref{t.SNOTWI} hold, where one replaces Albert algebras, cubic Galois extensions, and the inclusion of root systems $D_4 \subset E_6$ by Brown algebras or Freudenthal triple systems, quadratic Galois extensions, and the inclusion $E_6 \subset E_7$?

{\small\subsection*{Acknowledgements}
The first author was partially supported by NSF grant DMS-1201542. 
He thanks TU Dortmund for its hospitality under the auspices of the Gambrinus Fellowship while some of the research was conducted.  The second author thanks the Department of Mathematics and Statistics of the University of Ottawa, where much of this research was initiated, for its support and hospitality.}

\def\cprime{$'$}
\providecommand{\bysame}{\leavevmode\hbox to3em{\hrulefill}\thinspace}
\providecommand{\MR}{\relax\ifhmode\unskip\space\fi MR }
\providecommand{\MRhref}[2]{%
  \href{http://www.ams.org/mathscinet-getitem?mr=#1}{#2}
}
\providecommand{\href}[2]{#2}


\begin{thebibliography}{KMRT98}

\bibitem[AJ57]{MR0088487}
A.A. Albert and N.~Jacobson, \emph{On reduced exceptional simple {J}ordan
  algebras}, Ann. of Math. (2) \textbf{66} (1957), 400--417. \MR{0088487
  (19,527b)}

\bibitem[ART79]{ART}
S.A. Amitsur, L.H. Rowen, and J.-P. Tignol, \emph{Division algebras of degree
  $4$ and $8$ with involution}, Israel J. Math. \textbf{33} (1979), 133--148.

\bibitem[BT87]{BrTi3}
F.~Bruhat and J.~Tits, \emph{Groupes alg\'ebriques sur un corps local.
  {C}hapitre {III}. {C}ompl\'ements et applications \'{a} la cohomologie
  galoisienne}, J. Fac. Sci. Univ. Tokyo Sect. IA Math. \textbf{34} (1987),
  no.~3, 671--698.

\bibitem[CEKT13]{ChEKT}
V.~Chernousov, A.~Elduque, M-A. Knus, and J-P. Tignol, \emph{Algebraic groups
  of type ${D}_4$, triality, and composition algebras}, Documenta Math.
  \textbf{18} (2013), 413--468.

\bibitem[CKT12]{ChKT}
V.~Chernousov, M-A. Knus, and J-P. Tignol, \emph{Conjugacy class of
  trialitarian automorphisms and symmetric compositions}, J. Ramanujan Math.
  Soc. \textbf{27} (2012), 479--508.

\bibitem[DG70]{SGA3}
M.~Demazure and A.~Grothendieck, \emph{{SGA}3: Sch{\'{e}}mas en groupes},
  Lecture Notes in Mathematics, vol. 151--153, Springer, 1970.

\bibitem[Eng02]{En-dis}
R.~Engelberger, \emph{Twisted compositions}, Ph.D. thesis, ETH {Z}\"urich,
  2002.

\bibitem[Gar01]{G:struct}
S.~Garibaldi, \emph{Structurable algebras and groups of type ${E}_6$ and
  ${E}_7$}, J. Algebra \textbf{236} (2001), no.~2, 651--691.

\bibitem[Gar06]{G:ur}
\bysame, \emph{Unramified cohomology of classifying varieties for exceptional
  simply connected groups}, Trans. Amer. Math. Soc. \textbf{358} (2006), no.~1,
  359--371.

\bibitem[Gar12]{G:outer}
\bysame, \emph{Outer automorphisms of algebraic groups and determining groups
  by their maximal tori}, Michigan Math. J. \textbf{61} (2012), no.~2,
  227--237.

\bibitem[GMS03]{GMS}
S.~Garibaldi, A.~Merkurjev, and J-P. Serre, \emph{Cohomological invariants in
  {G}alois cohomology}, University Lecture Series, vol.~28, Amer.\ Math.\ Soc.,
  2003.

\bibitem[GP07]{GPe}
S.~Garibaldi and H.P. Petersson, \emph{Groups of outer type ${E}_6$ with
  trivial {T}its algebras}, Transf. Groups \textbf{12} (2007), no.~3, 443--474.

\bibitem[Jac71]{Jac:ex}
N.~Jacobson, \emph{Exceptional {L}ie algebras}, Lecture notes in pure and
  applied mathematics, vol.~1, Marcel-Dekker, New York, 1971.

\bibitem[Jac81]{MR634508}
N.~Jacobson, \emph{Structure theory of {J}ordan algebras}, University of
  Arkansas Lecture Notes in Mathematics, vol. \textbf{5}, University of
  Arkansas, Fayetteville, Ark., 1981. \MR{634508 (83b:17015)}

\bibitem[JK73]{MR0404370}
N.~Jacobson and J.~Katz, \emph{Generically algebraic quadratic {J}ordan
  algebras}, Scripta Math. \textbf{29} (1973), no.~3-4, 215--227, Collection of
  articles dedicated to the memory of Abraham Adrian Albert. \MR{0404370 (53
  \#8172)}

\bibitem[KMRT98]{KMRT}
M.-A. Knus, A.S. Merkurjev, M.~Rost, and J.-P. Tignol, \emph{The book of
  involutions}, Colloquium Publications, vol.~44, Amer.\ Math.\ Soc., 1998.

\bibitem[KT15]{KT:3D4}
M-A. Knus and J-P. Tignol, \emph{Triality and algebraic groups of type
  $^3{D}_4$}, arxiv:1409.1718, 2015, to appear in \emph{Documenta Math.}, extra
  vol.

\bibitem[Lan02]{MR1878556}
S.~Lang, \emph{Algebra}, third ed., Graduate Texts in Mathematics, vol. 211,
  Springer-Verlag, New York, 2002. \MR{1878556 (2003e:00003)}

\bibitem[Loo75]{MR0444721}
O.~Loos, \emph{Jordan pairs}, Springer-Verlag, Berlin, 1975, Lecture Notes in
  Mathematics, Vol. \textbf{460}. \MR{R0444721 (56 \#3071)}

\bibitem[Loo91]{MR1100842}
\bysame, \emph{Finiteness conditions in {J}ordan pairs}, Math. Z. \textbf{206}
  (1991), no.~4, 577--587. \MR{1100842 (92d:17030)}

\bibitem[McC69]{MR0238916}
K.~McCrimmon, \emph{The {F}reudenthal-{S}pringer-{T}its constructions of
  exceptional {J}ordan algebras}, Trans. Amer. Math. Soc. \textbf{139} (1969),
  495--510. \MR{0238916 (39 \#276)}

\bibitem[McC70]{MR0271181}
\bysame, \emph{The {F}reudenthal-{S}pringer-{T}its constructions revisited},
  Trans. Amer. Math. Soc. \textbf{148} (1970), 293--314. \MR{0271181 (42
  \#6064)}

\bibitem[MT95]{MT}
A.S. Merkurjev and J.-P. Tignol, \emph{The multipliers of similitudes and the
  {B}rauer group of homogeneous varieties}, J. reine angew. Math. \textbf{461}
  (1995), 13--47.

\bibitem[MZ88]{MR946263}
K.~McCrimmon and E.~Zelmanov, \emph{The structure of strongly prime quadratic
  {J}ordan algebras}, Adv. in Math. \textbf{69} (1988), no.~2, 133--222.
  \MR{946263 (89k:17052)}

\bibitem[Pet04]{MR2063796}
H.P. Petersson, \emph{Structure theorems for {J}ordan algebras of degree three
  over fields of arbitrary characte\-ristic}, Comm. Algebra \textbf{32} (2004),
  no.~3, 1019--1049. \MR{2063796}

\bibitem[Pet15]{MR3316839}
\bysame, \emph{An embedding theorem for reduced {A}lbert algebras over
  arbitrary fields}, Comm. Algebra \textbf{43} (2015), no.~5, 2062--2088.
  \MR{3316839}

\bibitem[PR84a]{MR85i:17029}
H.P. Petersson and M.L. Racine, \emph{Springer forms and the first {T}its
  construction of exceptional {J}ordan division algebras}, Manuscripta Math.
  \textbf{45} (1984), no.~3, 249--272. \MR{85i:17029}

\bibitem[PR84b]{MR86g:17020}
\bysame, \emph{The toral {T}its process of {J}ordan algebras}, Abh. Math. Sem.
  Univ. Hamburg \textbf{54} (1984), 251--256. \MR{86g:17020}

\bibitem[PT04a]{MR2032452}
H.P. Petersson and M.~Thakur, \emph{The \'etale {T}its process of {J}ordan
  algebras revisited}, J. Algebra \textbf{273} (2004), no.~1, 88--107.
  \MR{2032452 (2004m:17047)}

\bibitem[PT04b]{PreetiTig}
R.~Preeti and J.-P. Tignol, \emph{Multipliers of improper similitudes},
  Documenta Math. \textbf{9} (2004), 183--204.

\bibitem[Rac72]{MR0304447}
M.L. Racine, \emph{A note on quadratic {J}ordan algebras of degree {$3$}},
  Trans. Amer. Math. Soc. \textbf{164} (1972), 93--103. \MR{0304447 (46
  \#3582)}

\bibitem[Ser93]{Se:Gebres}
J-P. Serre, \emph{G\`{e}bres}, L'Enseignement Math. \textbf{39} (1993), 33--85,
  (reprinted in Oeuvres, vol.~IV, pp.~272--324).

\bibitem[Spr98]{Sp:LAG}
T.A. Springer, \emph{Linear algebraic groups}, second ed., Birkh\"{a}user,
  1998.

\bibitem[SV68]{SpV}
T.A. Springer and F.D. Veldkamp, \emph{On {H}jelmslev-{M}oufang planes}, Math.
  Zeit. \textbf{107} (1968), 249--263.

\bibitem[Tit71]{Ti:R}
J.~Tits, \emph{Repr\'esentations lin\'eaires irr\'eductibles d'un groupe
  r\'eductif sur un corps quelconque}, J. Reine Angew. Math. \textbf{247}
  (1971), 196--220.

\end{thebibliography}
\end{document}